\documentclass[12pt,a4paper]{amsart}
\usepackage[utf8]{inputenc}
\usepackage[T1]{fontenc}
\usepackage{amsmath}
\usepackage{amsthm}
\usepackage{amsfonts}
\usepackage{amssymb}
\usepackage{mathrsfs}
\usepackage{url}
\usepackage{mathdots}
\usepackage{geometry}
\usepackage{verbatim}
\usepackage{hyperref}
\usepackage{float}
\usepackage{enumerate}
\usepackage{tikz-cd}
\usepackage[nolabel]{showlabels}
\usepackage{extarrows}
\usepackage{comment}
\usepackage{bbm}
\usepackage{dynkin-diagrams}

\hypersetup{
    bookmarks=true,         
    unicode=false,          
    pdftoolbar=true,        
    pdfmenubar=true,        
    pdffitwindow=false,     
    pdfstartview={FitH},    
		pdftitle={Motivic weight {$23$}},    
    pdfauthor={Ga\"etan Chenevier and Olivier Ta\"ibi},     
    colorlinks=true,       
    linkcolor=black,          
    citecolor=black,        
    filecolor=black,      
    urlcolor=black}           

\geometry{a4paper}

\newcommand{\ol}[1]{\overline{#1}}

\newcommand{\Q}{\mathbb{Q}}

\newcommand{\R}{\mathbb{R}}
\providecommand{\C}{\mathbb{C}}
\renewcommand{\C}{\mathbb{C}}

\newcommand{\Fp}{\mathbb{F}_p}
\newcommand{\F}{\mathbb{F}}

\newcommand{\Zp}{\mathbb{Z}_p}
\newcommand{\Zell}{\mathbb{Z}_{\ell}}
\newcommand{\Z}{\mathbb{Z}}

\newcommand{\res}{\operatorname{res}}
\newcommand{\Stab}{\operatorname{Stab}}

\newcommand{\Out}{\mathrm{Out}}
\newcommand{\GL}{\mathrm{GL}}

\newcommand{\SO}{\mathrm{SO}}
\renewcommand{\O}{\mathrm{O}}
\newcommand{\rmE}{\mathrm{E}}
\newcommand{\rmQ}{\mathrm{Q}}

\newcommand{\Hom}{\mathrm{Hom}}

\newcommand{\ps}{\par \smallskip}
\newcommand{\isomo}{\overset{\sim}{\rightarrow}}

\newtheorem{theo}{Theorem}[section]

\newtheorem{lemm}[theo]{Lemma}

\newtheorem{coro}[theo]{Corollary}
\newtheorem{defi}[theo]{Definition}
\newtheorem{prop}[theo]{Proposition}

\newtheorem{theointro}{Theorem}

\newtheorem{fact}[theo]{Fact}
\newtheorem{corodefi}[theo]{Corollary-Definition}


\newenvironment{pf}
{\medskip\noindent {\it Proof.  }}
{\hfill\nobreak $\Box$ \par\bigbreak}

\baselineskip=16pt

\begin{document}

\author{Ga\"etan Chenevier}
\address[Ga\"etan Chenevier]{CNRS, Universit\'e Paris-Sud}
\author{Olivier Ta\"ibi}
\address[Olivier Ta\"ibi]{CNRS, \'Ecole Normale Sup\'erieure de Lyon}
\thanks{Ga\"etan Chenevier and Olivier Ta\"ibi are supported by the C.N.R.S. and by the project ANR-14-CE25.}
\title[Siegel modular forms of weight $13$ and the Leech lattice]{Siegel modular forms of weight $13$ and the Leech lattice}

\begin{abstract} For $g=8,12,16$ and $24$, there is a nonzero alternating $g$-multilinear form on the ${\rm Leech}$ lattice, unique up to a scalar, which is invariant by the orthogonal group of ${\rm Leech}$. The harmonic Siegel theta series built from these alternating forms are Siegel modular cuspforms of weight $13$ for ${\rm Sp}_{2g}(\Z)$. We prove that they are nonzero eigenforms, determine one of their Fourier coefficients, and give informations about their standard ${\rm L}$-functions. These forms are interesting since, by a recent work of the authors, they are the only nonzero Siegel modular forms of weight $13$ for ${\rm Sp}_{2n}(\Z)$, for any $n\geq 1$.
\end{abstract}

\maketitle

\section*{Introduction}

	Let $L$ be an even unimodular lattice of dimension $24$.	We know since Conway and Niemeier that either $L$ has no root, and is isomorphic to the Leech lattice (denoted ${\rm Leech}$ below), or $L \otimes \R$ is generated by the roots of $L$ \cite[Chap. 16]{splag}. In the latter case it follows that for any integer $g \geq 1$ there is no nonzero alternating $g$-form on $L$ which is invariant by its orthogonal group ${\rm O}(L)$ (see \S \ref{complements}). On the other hand, ${\rm O}({\rm Leech})$ is Conway's group ${\rm Co}_0$ and a  computation made in \cite{chenevier_niemeier}, using the character $\chi_{102}$ and the power maps given in the $\mathbb{ATLAS}$ \cite{atlas}, revealed that the average characteristic polynomial of an element of ${\rm O}({\rm Leech})$ is
\begin{equation}\label{invaltleech} \frac{1}{|{\rm Co}_0|}  \sum_{\gamma \in {\rm Co}_0} \,\det(t- \gamma)\, =\, t^{24} + t^{16} + t^{12} + t^8 + 1.\end{equation}
It follows that for $g$ in $\{8,12,16,24\}$, and only for those values of $g \geq 1$, there is a nonzero alternating $g$-multilinear form, unique up to a rational scalar,
$$\omega_g \,:\, {\rm Leech}^g \,\longrightarrow \,\Q$$
such that $\omega_g(\gamma v_1, \gamma v_2,\dots,\gamma v_g)= \omega_g(v_1,v_2,\dots,v_g)$ for all $\gamma \in {\rm O}({\rm Leech})$ and all $v_1,\dots,v_g$ in ${\rm Leech}$.\ps

A first natural question is to exhibit concretely these $\omega_g$. Of course, we may choose for $\omega_{24}$ the determinant taken in a $\Z$-basis of ${\rm Leech}$: it is indeed ${\rm O}({\rm Leech})$-invariant as we know since Conway \cite{conwayoleech} that any element in ${\rm O}({\rm Leech})$ has determinant $1$, a non trivial fact. We will explain in \S \ref{constomegag} a simple and uniform construction of $\omega_8, \omega_{12}$ and $\omega_{16}$. It will appear that it is not an accident that the numbers $0,8,12,16$ and $24$ are also the possible length of an element in the extended binary Golay code. \ps

A second interesting question is to study the Siegel theta series
\begin{equation} \label{formfg} {\rm F}_g \,\,\overset{\rm def}{=}\, \,  \sum_{v \in {\rm Leech}^g} \omega_g(v) \, q^{\frac{v\cdot v}{2}}.\end{equation}
Here $v\cdot v$ abusively denotes the Gram matrix $(v_i \cdot v_j)_{1 \leq i,j \leq g}$ with $v=(v_1,\dots,v_g)$, and $q^{n}$ abusively denotes the function $\tau \mapsto \, e^{ \,2\,\pi\, i\, {\rm Tr} (n \tau)}$ for $\tau \in {\rm M}_g(\C)$ in the Siegel upper-half space. This theta series is a Siegel modular form of weight $13$ for the full Siegel modular group ${\rm Sp}_{2g}(\Z)$, necessarily a cuspform, whose Fourier coefficients are in $\Q$. The first paragraph above even shows that this is an eigenform... provided it is nonzero ! (see \S \ref{complements}). \ps

Among these four forms, only ${\rm F}_{24}$ seems to have been studied in the
past, by Freitag, in the last section of \cite{Freitag_harm_theta}.
He observed that ${\rm F}_{24}$ is indeed a nonzero eigenform.
Indeed, if we choose $\omega_{24}$ as above, and if $u \in {\rm Leech}^{24}$ is
a $\Z$-basis of ${\rm Leech}$ with $\omega_{24}(u)=1$, there are exactly $|{\rm
O}({\rm Leech})|$ vectors $v \in {\rm Leech}^{24}$ with $v\cdot v = u \cdot u$,
namely the $\gamma u$ with $\gamma$ in ${\rm O}({\rm Leech})$.
They all satisfy $\omega_{24}(v) = 1$ since any element of ${\rm O}({\rm
Leech})$ has determinant $1$.
It follows that the Fourier coefficient of ${\rm F}_{24}$ in $q^{\frac{u \cdot
u}{2}}$ is $|{\rm O}({\rm Leech})|$, it is thus nonzero. \ps

Nevertheless, the following theorem was recently proved in \cite[Cor. 1 \& Prop. 5.12]{CheTai}:

\begin{theointro}\label{thmchetai} For $g\geq 1$ the space of weight $13$ Siegel modular forms for ${\rm Sp}_{2g}(\Z)$ is $0$, or we have $g  \in \{8,12,16,24\}$, it has dimension $1$, and is generated by ${\rm F}_g$.
\end{theointro}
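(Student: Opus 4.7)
My plan is to apply Arthur's endoscopic classification of discrete automorphic representations of ${\rm Sp}_{2g}$. A nonzero Siegel cuspform of weight $13$ and level $1$ generates a cuspidal automorphic representation $\pi = \pi_\infty \otimes \pi_f$ of ${\rm Sp}_{2g}(\A)$, with $\pi_f$ unramified everywhere and $\pi_\infty$ a representation of ${\rm Sp}_{2g}(\R)$ whose infinitesimal character, determined by the weight $13$, is the explicit half-integral tuple attached to scalar weight-$13$ Siegel forms. By Arthur, $\pi$ belongs to the global packet of a discrete parameter
\[ \psi \,=\, \boxplus_i \,\pi_i[d_i], \]
where the $\pi_i$ are self-dual cuspidal automorphic representations of $\GL_{n_i}/\Q$ of conductor $1$, $[d_i]$ is the $d_i$-dimensional irreducible representation of the Arthur $\SL_2$, $\sum_i n_i d_i = 2g+1$, and the usual symplectic/orthogonal self-duality conditions on the $\pi_i$ are satisfied.

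The second step is to enumerate the possible $\psi$. The archimedean constraint rigidly prescribes the infinitesimal character of each $\pi_i$ in terms of $d_i$; in particular each $\pi_i$ must be algebraic regular of small motivic weight. By the classification of level-$1$ self-dual cuspidal automorphic representations of $\GL_n/\Q$ of bounded motivic weight, obtained in previous work of Chenevier--Lannes and Chenevier--Renard, the list of possible $\pi_i$ is very short (essentially the trivial character, the Galois representations attached to the level-$1$ cuspidal eigenforms of weight $\leq 22$, and a handful of higher-rank representations). A careful case analysis then shows that a candidate parameter $\psi$ realizing the weight-$13$ archimedean datum exists only if $g \in \{8,12,16,24\}$, in each case uniquely, with a non-tempered Saito--Kurokawa-type shape.

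The third and most delicate step is to compute the multiplicity of $\pi$ in the discrete spectrum of ${\rm Sp}_{2g}$ via Arthur's multiplicity formula. One identifies the element of the archimedean Adams--Johnson packet contributing a scalar weight-$13$ Siegel form and checks that Arthur's global sign character $\epsilon_\psi$ is trivial on the corresponding character of the centralizer, yielding multiplicity exactly one. Combined with the existence of the nonzero forms ${\rm F}_g$ (due to Freitag for $g=24$ as recalled above, and ensured in the remaining cases by the explicit Fourier-coefficient computations carried out in the present paper), this pins down the theorem. The main obstacle is the sign computation in Arthur's multiplicity formula: it requires evaluating the Adams--Johnson characters on explicit elements of the component group of $\psi$, a task which is delicate but tractable precisely because the surviving parameters are so rigid.
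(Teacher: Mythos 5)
Your overall strategy is essentially the one used to prove this theorem in the first place: the statement is not reproved in the present paper but quoted from \cite{CheTai}, whose argument is exactly the Arthur-classification route you sketch (enumeration of discrete parameters of ${\rm Sp}_{2g}$ with the weight-$13$ infinitesimal character via the classification of level-one self-dual cuspidal representations of small motivic weight, then Arthur's multiplicity formula). Your one genuine deviation is welcome: you close the argument by invoking the nonvanishing of ${\rm F}_g$ proved via Fourier coefficients in the present paper, whereas \cite{CheTai} had to identify the abstractly-constructed eigenform with a theta series through B\"ocherer's results; once $\dim \leq 1$ is known, either route works, and yours is the cleaner logical organization (it is precisely the division of labour the present paper advocates).

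Two gaps remain. First, the theorem concerns all of ${\rm M}_{13}({\rm Sp}_{2g}(\Z))$, not just cusp forms, and your argument starts from a \emph{cuspidal} automorphic representation. You still need to rule out noncuspidal forms: since $\ker \Phi = {\rm S}_{13}$ for the Siegel operator $\Phi$, induction on $g$ reduces this to showing, e.g., that ${\rm F}_8$ is not in the image of $\Phi : {\rm M}_{13}({\rm Sp}_{18}(\Z)) \to {\rm M}_{13}({\rm Sp}_{16}(\Z))$; as $13 < 2\cdot 9$ the Klingen Eisenstein series does not converge and this requires an argument (residual/Eisenstein spectrum analysis, as in \cite{CheTai}), not just the cuspidal parameter count. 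Second, and more seriously, your multiplicity computation appeals to Adams--Johnson packets at the archimedean place. For $g=8$ and $g=12$ the infinitesimal character ($\pm(13-j)$, $j=1,\dots,g$, together with $0$) is regular and this is fine, but for $g=16$ and $g=24$ it is highly non-regular (eigenvalues repeat), the parameters are not of Adams--Johnson type, and the local packets and their characters of the component group are not supplied by that theory; this is exactly why \cite{CheTai} needs the recent results of Moeglin and Renard \cite{MR_scalar} (beyond \cite{AMR}) in those two cases. As written, the step ``evaluate the Adams--Johnson characters on the component group'' fails precisely for the two hardest values of $g$.
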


The proof given {\it loc. cit.} of the non vanishing of the forms ${\rm F}_g$ is quite indirect. Using quite sophisticated recent results from the theory of automorphic forms (Arthur's classification \cite{Arthur_book}, recent description by Arancibia, Moeglin and Renard of certain local Arthur packets \cite{AMR,MR_scalar}) we observed the existence of $4$ weight $13$ Siegel modular eigenforms for ${\rm Sp}_{2g}(\Z)$ of respective genus $g=8,12,16$ and $24$, and with specific standard ${\rm L}$-function. The cases $g=16$ and $g=24$ are especially delicate, and use recent results of Moeglin and Renard \cite{MR_scalar}. Using works of B\"ocherer \cite{bocherer_theta}, we then checked that they must be linear combinations of Siegel theta series construction from alternating $g$-multilinear forms on Niemeier lattices, hence must be equal to ${\rm F}_g$ by what we explained above. Our aim here is to provide a more direct and elementary proof of the non vanishing of the $3$ remaining forms ${\rm F}_g$, by exhibiting a nonzero Fourier coefficient. \ps

Let $F = \sum_n \, a_n \, q^n$ be a Siegel modular form for ${\rm Sp}_{2g}(\Z)$ of {\it odd} weight, and $N$ an even Euclidean lattice of rank $g$.
If $v$ and $v'$ in $N^g$ are $\Z$-bases of $N$, with associated Gram matrices
$2n$ and $2n'$, we have $a_n \, = \, (\det \gamma)\, a_{n'}$ where $\gamma$ is the unique element of ${\rm GL}(N)$ with $\gamma(v)=v'$.
In particular, the element $\pm a_n$  (a {\it complex number modulo sign}) only depends on the isometry class of $N$, and will be denoted $a_N(F)$ and called {\it the $N$-th Fourier coefficient of $F$}. For instance, we have $a_{{\rm Leech}}({\rm F}_{24})=\pm |{\rm O}({\rm Leech})|$. We will say that a lattice $N$ is {\it orientable} if any element of ${\rm O}(N)$ has determinant $1$; note that we have $a_N(F)=0$ for all non orientable even lattice $N$ of rank $g$. \ps

Four orientable rank $g$ even lattices ${\rm Q}_g$ with $g$ in $\{8,12,16,24\}$ will play an important role below. The lattice ${\rm Q}_{24}$ is simply ${\rm Leech}$.
The lattice ${\rm Q}_{12}$ is the unique even lattice $L$ of rank $12$ without
roots with $L^\sharp/L \simeq (\Z/3\Z)^6$; it is also known as the {\it
Coxeter-Todd lattice} \cite[Ch. 4\, \S 9]{splag}.
The lattices ${\rm Q}_{8}$ and ${\rm Q}_{16}$ are the unique even lattices $L$ without roots, of respective rank $8$ and $16$, with $L^\sharp/L \simeq (\Z/5\Z)^4$; the lattice ${\rm Q}_8$ was known to Maass and is sometimes called the {\it iscosian lattice} \cite[Ch. 8 \,\S 2]{splag}. These properties, and other relevant ones for our purposes, will be reviewed or proved in \S \ref{fixedpoints} and \S \ref{sectlattices}. An important one is that there is a unique ${\rm O}({\rm Leech})$-orbit of sublattices of ${\rm Leech}$ isometric to ${\rm Q}_g$. Our main result is the following. \ps

\begin{theointro}\label{mainthm}For each $g$, the ${\rm Q}_g$-Fourier coefficient of ${\rm F}_g$ is nonzero. More precisely, if we normalize $\omega_g$ as in Definition \ref{defomegag}, we have $$a_{{\rm Q}_g}({\rm F}_g) \, = \, \pm \, {\rm n}_g \, {\rm e}_g,$$
where ${\rm n}_g$ is the number of isometric embeddings ${\rm Q}_g \hookrightarrow {\rm Leech}$, and with ${\rm e}_8={\rm e}_{16}=5$, ${\rm e}_{12}=18$ and ${\rm e}_{24}=1$.
\end{theointro}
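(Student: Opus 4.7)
The plan is to reduce the Fourier coefficient to a single alternating-form evaluation via the orbit structure, and then compute that evaluation explicitly in each case. Fix a $\Z$-basis $v = (v_1, \ldots, v_g)$ of ${\rm Q}_g$ with Gram matrix $2n$. By the definition of the $N$-th Fourier coefficient and equation~\eqref{formfg},
$$a_{{\rm Q}_g}({\rm F}_g) \,=\, \sum_{\substack{v' \in {\rm Leech}^g \\ v' \cdot v' = 2n}} \omega_g(v').$$
Each such $v'$ is the image $(\iota(v_1),\dots,\iota(v_g))$ of $v$ under an isometric embedding $\iota : {\rm Q}_g \hookrightarrow {\rm Leech}$. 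By the uniqueness of the ${\rm O}({\rm Leech})$-orbit of sublattices of ${\rm Leech}$ isomorphic to ${\rm Q}_g$, any two such embeddings differ by $\iota' = \gamma \circ \iota \circ \alpha$ for some $\gamma \in {\rm O}({\rm Leech})$ and $\alpha \in {\rm O}({\rm Q}_g)$. The ${\rm O}({\rm Leech})$-invariance of $\omega_g$ kills the $\gamma$-factor, and the orientability of ${\rm Q}_g$ (which forces $\det \alpha = 1$) kills the $\alpha$-factor. Hence every term in the sum equals the same value and
$$a_{{\rm Q}_g}({\rm F}_g) \,=\, {\rm n}_g \cdot \iota_0^*\omega_g(v_1, \ldots, v_g)$$
for any fixed embedding $\iota_0$.

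Next, $\iota_0^* \omega_g$ is a top-degree alternating form on the rank-$g$ lattice ${\rm Q}_g$, hence equals $\varepsilon \cdot \det_v$ for some rational $\varepsilon$, where $\det_v$ denotes the signed lattice volume in the basis $v$. The absolute value $|\varepsilon|$ is independent of the choice of $v$, and the theorem reduces to showing $|\varepsilon| = {\rm e}_g$. For $g = 24$ this is immediate: ${\rm Q}_{24} = {\rm Leech}$, one takes $\iota_0 = {\rm id}$, and $\omega_{24}$ is the determinant in a $\Z$-basis of ${\rm Leech}$ by the normalization recalled in the introduction; so ${\rm e}_{24} = 1$ and one recovers Freitag's $a_{{\rm Leech}}({\rm F}_{24}) = |{\rm O}({\rm Leech})|$.

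For the remaining cases $g \in \{8, 12, 16\}$, the plan is to combine the explicit formula for $\omega_g$ from Definition~\ref{defomegag} (see \S\ref{constomegag}) with a concrete model of the embedding ${\rm Q}_g \hookrightarrow {\rm Leech}$. A natural source of such a model is the fixed-point or orthogonal-complement construction attached to a specific finite-order element of ${\rm O}({\rm Leech})$, whose invariant sublattice has the discriminant group characterizing ${\rm Q}_g$ (as discussed in \S\ref{fixedpoints} and \S\ref{sectlattices}): an order-$5$ element for $g = 8, 16$, exploiting the icosian structure, and an order-$3$ element for $g = 12$, exploiting the Eisenstein ($\Z[\zeta_3]$) structure of the Coxeter--Todd lattice. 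The main obstacle is the explicit evaluation: $\omega_g$ is expressed as a combination of coordinate $g$-minors indexed by Golay codewords of weight $g$, and one must show that, once restricted to the specific sublattice $\iota_0({\rm Q}_g)$, most such minors vanish and the remaining ones sum to a common integer of absolute value $5, 18, 5$. This final combinatorial identity---presumably forced by the stabilizer of $\iota_0({\rm Q}_g)$ in ${\rm O}({\rm Leech})$ being large enough to drastically restrict the relevant codewords---is where the real work lies.
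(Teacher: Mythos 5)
Your first half is correct and is exactly the paper's reduction: the $g$-tuples in ${\rm Leech}^g$ with Gram matrix $2n$ are precisely the images of a fixed basis of ${\rm Q}_g$ under the ${\rm n}_g$ isometric embeddings, the transitivity of ${\rm O}({\rm Leech})$ on sublattices isometric to ${\rm Q}_g$ (Proposition \ref{latticechar}) together with the ${\rm O}({\rm Leech})$-invariance of $\omega_g$ and the orientability of ${\rm Q}_g$ forces all terms to coincide, and the $g=24$ case is then Freitag's observation. No complaint there.

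The second half, however, is a plan rather than a proof, and the quantities ${\rm e}_8={\rm e}_{16}=5$ and ${\rm e}_{12}=18$ --- which are the actual content of the theorem --- are never established. What is missing is precisely the paper's Lemma \ref{calcwgbasis}, resting on Lemma \ref{geometrym24}. Concretely: (i) after passing to the orthogonal $\Z[\tfrac12]$-basis $\sqrt2\,\nu_i$ ($i\in F$), $\sqrt2\,\nu_Z$ ($Z\in\mathcal Z$) of the fixed-point lattice $Q$ of an element $c$ of shape $1^4 5^4$ or $1^6 3^6$, one must identify which codewords $C$ of size $g$ give a nonzero minor (those meeting each $\langle c\rangle$-orbit in exactly one point, each contributing $\pm 2^{g/2}$); (ii) one must count these: there are exactly $5$ octads containing a given tetrad, and exactly $18$ dodecads containing a given umbral hexad --- these are genuine combinatorial facts about $\mathcal G$ that have to be proved, not just "presumably forced"; (iii) one must show all contributing minors carry the \emph{same} sign, which uses the transitive action of the normalizer of $\langle c\rangle$ on these codewords together with $\det\gamma_{|Q}=1$ (here orientability of $Q$, or an explicit argument, re-enters); and (iv) the case $g=16$ is not a fixed-point lattice but the orthogonal complement $Q^\perp$ of the order-$5$ fixed-point lattice, and the paper needs the separate identity $\omega_{16}(u')=\epsilon\,\omega_{24}(\sigma_{o}\wedge u'_1\wedge\cdots\wedge u'_{16})$ for an oriented octad $o$ to reduce it to a computation with $\omega_{24}$; your outline does not address how $\omega_{16}$ would be evaluated on a basis of $Q^\perp$ at all. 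Until these four points are carried out, the theorem is not proved.

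One small point in your favour over the paper as written: your count of the tuples with prescribed Gram matrix (namely ${\rm n}_g$, the number of embeddings, each contributing the common value $\omega_g(\iota_0(v))$) is the clean statement; the paper's phrasing "${\rm n}_g\,|{\rm O}(Q_g)|$ such $g$-tuples" with $\Gamma$ an ${\rm n}_g$-element subset carries a spurious factor of $|{\rm O}(Q_g)|$ that cancels only because $\Gamma$ should really index the ${\rm n}_g/|{\rm O}({\rm Q}_g)|$ sublattices.
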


As we will see, the quantity ${\rm e}_g$ has the following conceptual explanation in terms of the extended binary Golay code $\mathcal{G}$ and its automorphism group ${\rm M}_{24}$. Write ${\rm res}\, {\rm Q}_g \simeq (\Z/p_g\Z)^{r_g}$; then ${\rm e}_g$ is the number of $g$-element subsets of $\mathcal{G}$ containing the fixed point set of a given element of ${\rm M}_{24}$ of shape $1^{24- p_g r_g}\, p_g^{r_g}$ (Lemmas \ref{geometrym24} \& \ref{calcwgbasis}). We will also prove ${\rm n}_g  = |{\rm O}({\rm Leech})|/\kappa_{24-g}$, with $\kappa_g=1$ for $g<12$, $\kappa_{12}=3$ and $\kappa_{16}=10$. We would like to stress that our proof of Theorem \ref{mainthm} does not rely on any computer calculation other than the simple summations \eqref{invaltleech} and \eqref{invaltm24}. \ps

Last but not least, we discuss in the last section the standard ${\rm L}$-functions of the eigenforms ${\rm F}_g$: see Theorem \ref{standardparamFg}. This last part is less elementary than the others, and relies on \cite{Arthur_book,AMR,TaiMult} (but not on \cite{MR_scalar}). \ps

We end this introduction by discussing prior works on the determination of the spaces ${\rm M}_k({\rm Sp}_{2g}(\Z))$ of Siegel modular forms of weight $k$ for ${\rm Sp}_{2g}(\Z)$, and its subspace ${\rm S}_k({\rm Sp}_{2g}(\Z))$ of cuspforms, for $k<13$. For this purpose, the subspace $\Theta_{n}^g$ of ${\rm M}_{n}({\rm Sp}_{2g}(\Z))$ generated by (classical) Siegel theta series of even unimodular lattices of rank $2n$ has drawn much attention, starting with Witt's famous conjecture $\dim \Theta_{8}^g = 2 \Leftrightarrow g\geq 4$, proved by Igusa. The study of $ \Theta_{12}^g$ has a rich history as well.  Erokhin proved $\dim \Theta_{12}^g=24$ for $g\geq 12$ in \cite{Erok}, and Borcherds-Freitag-Weissauer showed $\dim \Theta_{12}^{11}=23$ in \cite{BoFrWe}. Nebe and Venkov conjectured in \cite{NebeVenkov} that the $11$ integers $\dim \Theta_{12}^g$, for $g=0,\dots,10$, are respectively  given by
$$1,2,3,4,6,8,11,14,18,20\,\,\,\text{and}\,\,\,22,$$
and proved it for $g \neq 7,8,9$. Ikeda used his ``lifts''  \cite{Ikeda01,IkedaM} to determine the standard ${\rm L}$-functions of $20$ of the $24$ eigenforms in $\Theta_{12}^{12}$. The full Nebe-Venkov conjecture was finally proved by Chenevier-Lannes \cite{CheLan}, as well as the determination of the $4$ standard ${\rm L}$-functions not determined by Ikeda. Moreover, these authors show $\Theta_{12}^g =  {\rm M}_{12}({\rm Sp}_{2g}(\Z))$ for all $g \leq 12$, as well as  $\Theta_{8}^g =  {\rm M}_{8}({\rm Sp}_{2g}(\Z))$ for all $g \leq 8$.
Simpler proof of these results, as well as their extension to all $g$, were then given in \cite{CheTai}, in which the vanishing of ${\rm S}_k({\rm Sp}_{2g}(\Z))$ is proved for $g>k$ and $k < 13$.
Let us mention that dimensions and generators of ${\rm S}_k({\rm Sp}_{2g}(\Z))$
with $g\leq k \leq 11$,  as well as standard ${\rm L}$-functions of eigenforms,
are also given in \cite{CheLan} and \cite{CheTai}, completing previous works of
several authors, including Ikeda, Igusa, Tsuyumine, Poor-Yuen and
Duke-Imamo\={g}lu.

\bigskip

\begin{center} {\sc General notations and terminology}\end{center}
Let $X$ be a set. We denote by $|X|$ the cardinality of $X$
and by $\frak{S}_X$ its symmetric group.
Let $k$ be a commutative ring.
We denote by $k\, X$ the free $k$-module over $X$.
The elements $x$ of $X$ form a natural $k$-basis of $k\, X$
that we will often denote by $\nu_x$ to avoid confusions.
For $S \subset X$ we also set $\nu_S = \sum_{x \in S} \nu_x$. \par
If $V$ and $W$ are two $k$-modules, a {\it quadratic map} $q : V \rightarrow W$ is
a map satisfying $q( \lambda v) = \lambda^2 q(v)$ for all $\lambda$ in $k$ and $v$ in $V$,
and such that $V \times V \rightarrow W, \, \, (x,y) \mapsto q(x+y)-q(x)-q(y),$
is $k$-bilinear (the  {\it associated bilinear form}). \par
A {\it quadratic space} over $k$ is a $k$-module $V$
equipped with a quadratic map (usually $k$-valued, but not always).
Such a space has an isometry group, denoted ${\rm O}(V)$, defined as the
subgroup of $k$-linear automorphisms $g$ of $V$ with $q \circ g =q$.
If $V$ is furthermore a free $k$-module of finite rank,
 and with $k$-valued quadratic form,
 the determinant of the Gram matrix of its associated bilinear form in any $k$-basis of $V$
 will be denoted by $\det V$
 (an element of $k^\times$ modulo squares). \par
  A {\it linking quadratic space} (a ${\rm qe}$-module in the terminology of \cite[Chap. 2]{CheLan}) is a finite quadratic space over $\Z$
 whose quadratic form is $\Q/\Z$-valued (or ``linking'')
 and with nondegenerate associated bilinear form.
 If $A$ is a finite abelian group, the {\it hyperbolic} linking quadratic space over $A$
 is ${\rm H}(A)=A \oplus {\rm Hom}(A,\Q/\Z)$, with the quadratic form $(x,\varphi) \mapsto \varphi(x)$. \par
 Let $L$ be a lattice in the Euclidean space $E$, with inner product $x \cdot y$.
 The {\it dual lattice} of $L$ is the lattice $L^\sharp = \{ x \in E\, \, |\, \,  x \cdot L \subset  \Z\}$.
 Assume $L$ is {\it integral}, that is $L \subset  L^\sharp$.
 A {\it root} of $L$ is an element $\alpha \in L$ with $\alpha \cdot \alpha =2$.
 The roots of $L$ form a (possibly empty) root system ${\rm R}(L)$ of type {\rm ADE} and rank $\leq \dim E$:
 see the beginning of \S \ref{sectlattices} for much more about roots and root systems.\par
 Assume furthermore  $L$ is even (that is $ x \cdot x$ is in $2 \Z$ for all $x$ in $L$).
 Then we view $L$ as a quadratic space over $\Z$ for the quadratic form $x \mapsto \frac{x\cdot x}{2}, L \rightarrow \Z$.
 Moreover, the finite abelian group $L^\sharp/L$ equipped with its nondegenerate $\Q/\Z$-valued quadratic form $x \mapsto \frac{x\cdot x}{2} \bmod \Z$
 is a linking quadratic space denoted ${\rm res}\, L$ and called the {\it residue} of $L$ (often also called the {\it discriminant group} or the {\it glue group}).

{\small
\tableofcontents
}

\section{The forms $\omega_g$}\label{constomegag}

We fix $\Omega$ a set with $24$ elements and as well as an extended binary Golay code $\mathcal{G}$ on $\Omega$.
This is a $12$-dimensional linear subspace of $(\Z/2\Z)\, \Omega$ that is often convenient to view as a subset of $\mathcal{P}(\Omega)$, the set of all subsets of $\Omega$. For any element $C$ of $\mathcal{G}$ we have $|C|=0,8,12,16$ or $24$. We first recall how to define the Leech lattice using $\mathcal{G}$, following Conway in \cite[Ch. 10, \S 3]{splag}.\ps

 An {\it octad} is an $8$-element subset of $\Omega$ belonging to $\mathcal{G}$.
 Their most important property is that any $5$ elements of $\Omega$ belong to a unique octad; in particular there are ${24 \choose 5}/{8 \choose 5}=759$ octads. We view the $24$-dimensional space $\R\, \Omega$ as an Euclidean space with orthonormal (canonical) basis the $\nu_i$ with $i$ in $\Omega$. For $S \subset \Omega$, recall that we set $\nu_S = \sum_{i \in S} \nu_i$.  Following Conway, the Leech lattice may be defined as the subgroup of $\R\, \Omega$ generated by the $\frac{1}{\sqrt{8}} \,\, 2 \nu_O$ with $O$ an octad, and the $\frac{1}{\sqrt{8}} \,\,(\nu_\Omega - 4\nu_i)$ with $i$ in $\Omega$. \ps

 The {\it Mathieu group} associated to $\mathcal{G}$ is the subgroup of $\mathfrak{S}_\Omega \simeq \mathfrak{S}_{24}$ preserving $\mathcal{G}$, and is simply denoted by ${\rm M}_{24}$. It has $48\cdot 24!/19! = 244823040$ elements. It acts on $\R\,\Omega$ (permutation representation), which realizes it a subgroup of ${\rm O}({\rm Leech})$.  We know since Frobenius the cycle decompositions, and cardinality, of all the conjugacy classes of ${\rm M}_{24}$ acting on $\Omega$ \cite[p. 12-13]{frobenius_m24}. For the convenience of the reader they are gathered in Table \ref{tableM24} below, which gives for each cycle shape the quantity $\texttt{cent}=|{\rm M}_{24}|/\texttt{card}$, where $\texttt{card}$ is the number of elements of this shape in ${\rm M}_{24}$.\footnote{We write this number in the form $n/2$ in the five cases where there are more than one conjugacy class of the given shape. In these five cases, there are exactly two conjugacy classes, each of which containing $|{\rm M}_{24}|/n$ elements.}
\begin{table}[htp]
{\tiny \renewcommand{\arraystretch}{1.8} \medskip
\begin{center}
\begin{tabular}{c|c|c|c|c|c|c|c|c|c|c}
${\texttt{shape}}$ & $1^8\,2^8$ & $2^{12}$ & $1^6\,3^6$ & $3^8$ & $2^4\,4^4$  & $1^4\,2^2\,4^4$ & $4^6$ & $1^4\,5^4$ & $1^2\,2^2\,3^2\,6^2$ & $6^4$    \\
\hline
${\texttt{cent}}$ & $21504$ & $7680$ & $1080$ & $504$ & $384$ & $128$ & $96$ & $60$ & $24$  & $24$ \\
\hline
\hline
${\texttt{shape}}$& $1^3 7^3$ & $1^2\,2\,4\,8^2$ & $2^2\,10^2$  & $1^2\,11^2$ & $2\,4\,6\,12$ & $12^2$ & $1\,2\,7\,14$ & $1\,3\,5\,15$ & $3\,21$ & $1\,23$ \\
\hline
${\texttt{cent}}$  & $42/2$ & $16$ & $20$ & $11$ & $12$ & $12$ & $14/2$ & $15/2$ & $21/2$ & $23/2$ \\
\end{tabular}
\end{center}}
\caption{{\small The cycle shape of the nontrivial elements of ${\rm M}_{24}$.} }\label{tableM24}
\end{table}
\noindent This table allows us to compute the average characteristic polynomial of an element in ${\rm M}_{24}$, and we find:

 \begin{fact}\label{invaltm24} The polynomial $\frac{1}{|{\rm M}_{24}|}  \sum_{\gamma \in {\rm M}_{24}} \,\det(t- \gamma)$ is
 $$ t^{24}\, -\, t^{23}\, - \,t^{17}\, +\,2\, t^{16}\, -\, t^{15}\, - \,t^{13}\, + \,2\, t^{12}\, -\, t^{11}\, - t^9 \,+ \,2\, t^8 \,- \,t^7 \,-\, t \,+\, 1.$$
\end{fact}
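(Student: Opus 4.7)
The plan is to compute the average directly from Table~\ref{tableM24}. The key observation is that for any permutation $\gamma \in \mathfrak{S}_\Omega$ of cycle shape $\prod_i i^{a_i}$ (meaning $a_i$ cycles of length $i$), the characteristic polynomial of $\gamma$ acting on $\R\,\Omega$ by the permutation representation factors as
\[ \det(t - \gamma) \,=\, \prod_{i \geq 1} (t^i - 1)^{a_i}, \]
because on each cycle of length $i$ the action is conjugate to a cyclic permutation, whose characteristic polynomial is $t^i - 1$.

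Grouping elements of ${\rm M}_{24}$ by conjugacy class, the average can be rewritten as
\[ \frac{1}{|{\rm M}_{24}|} \sum_{\gamma \in {\rm M}_{24}} \det(t-\gamma) \,=\, \sum_{c} \frac{\det(t - \gamma_c)}{\mathtt{cent}(c)}, \]
where $c$ ranges over conjugacy classes, $\gamma_c$ is a representative, and $\mathtt{cent}(c) = |{\rm M}_{24}|/\mathtt{card}(c)$ is as listed in the table. The footnote convention (writing $\mathtt{cent}$ as $n/2$ whenever two distinct classes share a cycle shape) is precisely what is needed to make this formula applicable verbatim, since two classes of identical shape contribute equal terms which sum to $1/(n/2)$ times that common term. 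Thus I only need to expand one term per row of Table~\ref{tableM24}, weighted by the tabulated $1/\mathtt{cent}$.

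What remains is therefore a purely mechanical polynomial summation: expand each $(t^i - 1)^{a_i}$ using the cycle shapes from the table, combine with the rational weights $1/\mathtt{cent}$, and verify that all denominators cancel to leave the stated integer polynomial. The main (and really only) obstacle is keeping track of the arithmetic; there is no conceptual step beyond the class-equation rearrangement. Two cheap sanity checks guard against errors: first, specializing $t = 1$ must yield $0$, since every nontrivial conjugacy class has at least one cycle of length $\geq 2$ contributing a factor $0$, while the identity contributes $(1-1)^{24} = 0$; this matches $1 - 1 - 1 + 2 - 1 - 1 + 2 - 1 - 1 + 2 - 1 - 1 + 1 = 0$ in the stated polynomial. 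Second, the constant term of the average equals the average of $\det(-\gamma) = \det(\gamma)$, which must be $1$ because ${\rm M}_{24} \subset {\rm O}({\rm Leech})$ and every element of ${\rm O}({\rm Leech})$ has determinant $1$ by Conway's theorem; this matches the constant term $+1$. These checks give confidence before carrying out the full expansion, after which the asserted identity falls out.
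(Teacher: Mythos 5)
Your proof is correct and is exactly the computation the paper intends: Fact \ref{invaltm24} is presented as a direct consequence of Table \ref{tableM24}, via $\det(t-\gamma)=\prod_i (t^i-1)^{a_i}$ for a permutation of cycle shape $\prod_i i^{a_i}$ and the class-equation weighting by $1/\mathtt{cent}$, with the footnote convention absorbing the five shapes that split into two classes. The only caveat is that the identity class is not a row of the table, so your summation must also include the term $(t-1)^{24}/|{\rm M}_{24}|$ — something you implicitly acknowledge in your $t=1$ sanity check but omit from the phrase ``one term per row.''
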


In particular, the space of ${\rm M}_{24}$-invariant alternating $g$-multilinear forms on $\Q\, \Omega$ has dimension $2$ for $g=8,12,16$. We will now exhibit concrete generators for the ${\rm M}_{24}$-invariants in each $\Lambda^g \, \Q\, \Omega$. We start with some general preliminary remarks. \par
Let $G$ be a group acting on a finite set $X$. A subset $S \subset X$ will be called {\it $G$-orientable} if the stabilizer $G_S$ of $S$ in $G$ acts on $S$ by even permutations. An {\it orientation} of such an $S$ is the choice of a numbering of its elements up to even permutations, or more formally, an $\mathfrak{A}_n$-orbit of bijections $\{1,\dots,n\} \isomo S$. Consider the permutation representation of $G$ on $\Q\, X$ and fix an integer $g \geq 1$. The dimension of the $G$-invariant subspace in $\Lambda^g\,\, \Q\, X$ is the number of $G$-orbits of $G$-orientable subsets of $X$ with $g$ elements. Indeed, fix a $G$-orientable nonempty subset $S$ of $X$ with $|S|=g$, choose an orientation $s: \{1,\dots,g\} \isomo S$, and set
\begin{equation}\label{essigmas} \beta_{s} \,= \,s(1) \wedge s(2) \wedge \dots
\wedge s(g) \hspace{.5 cm} \text{and}\hspace{.5 cm} \sigma_{s} =  \sum_{\gamma
\text{\, in\,}  G/G_S}\, \,  \gamma\, \,  \beta_{s}.\end{equation}
Then $\sigma_{s}$ is a nonzero $G$-invariant in $\Lambda^g\,\, \Q\, X$.
Both $\pm \beta_{s}$ and $\pm \sigma_{s}$ only depend on $S$, we denote them
respectively by $\beta_S$ and $\sigma_S$.
We also set $\beta_\emptyset=\sigma_{\emptyset}=1$.
It straightforward to check the following fact:\ps

\begin{fact} \label{invaltgen} If a group $G$ acts on the finite set $X$, and if $\mathcal{S}_g$ is a set of representatives for the $G$-orbits of $G$-orientable subsets of $X$ with $g$ elements, then the  $\sigma_S$ with $S$ in $\mathcal{S}_g$ are a $\Q$-basis of the $G$-invariants in $\Lambda^g\,\, \Q\, X$.
\end{fact}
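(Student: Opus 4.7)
My plan is to decompose $\Lambda^g \, \Q X$ into a direct sum of $G$-subrepresentations indexed by the $G$-orbits of $g$-element subsets of $X$, and then identify each summand with an induced sign character. For each $g$-subset $S \subset X$, the element $\beta_S$ is defined up to an overall sign, and after an arbitrary choice of sign for each $S$, the family $(\beta_S)$ is a $\Q$-basis of $\Lambda^g \, \Q X$. Moreover $G$ permutes the lines $\Q \beta_S$: for $\gamma \in G$ we have $\gamma \beta_S = \pm \beta_{\gamma S}$. Hence $\Lambda^g \, \Q X = \bigoplus_{\mathcal{O}} V_{\mathcal{O}}$, where $\mathcal{O}$ runs over the $G$-orbits of $g$-subsets of $X$ and $V_{\mathcal{O}}$ is the $\Q$-span of the $\beta_S$ with $S \in \mathcal{O}$.

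For an orbit $\mathcal{O}$ with fixed representative $S$, the stabilizer $G_S$ preserves the line $\Q \beta_S$ and acts on it via the character $\epsilon_S : G_S \to \{\pm 1\}$ given by the sign of the induced permutation of $S$. The map $\gamma \otimes \beta_S \mapsto \gamma \beta_S$ then extends to a $G$-equivariant isomorphism $V_{\mathcal{O}} \simeq \Ind_{G_S}^G \epsilon_S$. By Frobenius reciprocity the space $(V_{\mathcal{O}})^G$ has dimension $\langle 1, \epsilon_S \rangle_{G_S}$, which equals $1$ when $\epsilon_S$ is trivial (i.e.\ when $S$ is $G$-orientable) and $0$ otherwise. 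When $S$ is $G$-orientable, $\sigma_S$ is a $G$-invariant of $V_{\mathcal{O}}$ by construction, and it is nonzero since its expansion in the basis $(\beta_{\gamma S})_{\gamma \in G/G_S}$ of $V_{\mathcal{O}}$ has all coefficients equal to $\pm 1$; it therefore spans $(V_{\mathcal{O}})^G$. Summing over orbits yields the claim.

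The only delicate point is the bookkeeping of signs: the naive expression for $\sigma_s$ a priori depends on the ordered representative $s$ of $S$, and the sum $\sum_{\gamma \in G/G_S} \gamma \beta_s$ only makes sense if the summand $\gamma \beta_s$ depends on $\gamma$ modulo $G_S$. This independence is equivalent to the triviality of $\epsilon_S$ on $G_S$, i.e.\ to $G$-orientability of $S$, which is precisely why non-orientable orbits contribute no invariants. All remaining verifications are routine.
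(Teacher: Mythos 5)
Your proof is correct, and it is exactly the argument the paper has in mind: the paper states this Fact without proof (calling it ``straightforward to check''), but the preceding discussion already sets up the same orbit decomposition of $\Lambda^g\, \Q\, X$ and observes that each orbit of a $G$-orientable subset contributes the nonzero invariant $\sigma_S$. Your Frobenius-reciprocity packaging of the per-orbit dimension count, and the remark that well-definedness of $\sum_{\gamma \in G/G_S} \gamma \beta_s$ is equivalent to orientability, correctly fill in the routine verifications the authors omit.
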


\noindent The following lemma could probably be entirely deduced from Conway's results in \cite[Chap. 10 \S 2]{splag}. We will rather use Facts \ref{invaltm24} \& \ref{invaltgen} to prove it. Recall that we identify $\mathcal{P}(\Omega)$ with $(\Z/2\Z)\, \Omega$. In particular for $S_1, S_2$ in $\mathcal{P}(\Omega)$ we have $S_1 + S_2 = (S_1 \cup S_2) \smallsetminus (S_1 \cap S_2)$.\ps

\begin{lemm}\label{m24or} Let $S$ be a subset of $\Omega$.  Then $S$ is ${\rm M}_{24}$-orientable if, and only if, it is of the form $C+P$, with $C$ in $\mathcal{G}$ and either $|P|\leq 1$, or $|P|=2$ and $|P \cap C|=1$.
\end{lemm}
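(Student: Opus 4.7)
The plan is to combine Facts \ref{invaltm24} and \ref{invaltgen} via a dimension-matching argument. Averaging the identity $\det(t-\gamma)=\sum_g(-1)^g t^{24-g}\mathrm{tr}(\Lambda^g\gamma)$ over $\gamma\in{\rm M}_{24}$ converts Fact \ref{invaltm24} into a computation of the dimensions $d_g := \dim(\Lambda^g\Q\Omega)^{{\rm M}_{24}}$: these vanish outside the thirteen values $g\in\{0,1,7,8,9,11,12,13,15,16,17,23,24\}$, equal $2$ for $g\in\{8,12,16\}$, and equal $1$ for the remaining ten. By Fact \ref{invaltgen}, $d_g$ is the number of ${\rm M}_{24}$-orbits of orientable subsets of $\Omega$ of cardinality $g$. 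The strategy is to show (i) that every subset $S=C+P$ of the form in the statement is orientable, and (ii) that such subsets form exactly $d_g$ orbits in each cardinality; the lemma then follows since the property ``good'' is ${\rm M}_{24}$-stable.

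For (i), I will first record a uniqueness observation: if $S=C+P=C'+P'$ with both decompositions obeying the hypotheses, then $C+C'=P+P'$ lies in $\mathcal{G}$ and has weight at most $4$, forcing $C=C'$ and $P=P'$ since nonzero elements of $\mathcal{G}$ have weight at least $8$. Consequently $({\rm M}_{24})_S=({\rm M}_{24})_C\cap({\rm M}_{24})_P$ (setwise). The key step is then to establish that $({\rm M}_{24})_C$ acts on each codeword $C\in\mathcal{G}$ by even permutations. Since ${\rm M}_{24}$ is simple and non-abelian it embeds into $\mathfrak{A}_\Omega$, giving $\mathrm{sgn}(\gamma|_C)=\mathrm{sgn}(\gamma|_{\Omega\setminus C})$; the weights $|C|\in\{0,24\}$ are immediate, and $|C|=16$ reduces to $|C|=8$ by complementation. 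For $|C|=12$, Table \ref{tableM24} shows that no nontrivial element of ${\rm M}_{24}$ fixes more than eight points of $\Omega$, so $({\rm M}_{24})_C$ acts faithfully on $C$; since it has order $|M_{12}|$ and $M_{12}$ is simple, its image in $\mathfrak{S}_C$ lies in $\mathfrak{A}_C$. For $|C|=8$, a short orbit computation using the $5$-transitivity of ${\rm M}_{24}$ on $\Omega$ and the Steiner system $S(5,8,24)$ gives that the pointwise stabilizer of $C$ has order $16$, so the image in $\mathfrak{S}_C$ has order $20160$ and must equal $\mathfrak{A}_C$. With this in hand, I verify $\mathrm{sgn}(\gamma|_S)=+1$ for $\gamma\in({\rm M}_{24})_S$ by cases on $|P|$: if $|P|\leq 1$, then $\gamma$ fixes the unique or absent point of $P$, so $\mathrm{sgn}(\gamma|_S)=\mathrm{sgn}(\gamma|_C)=+1$; if $|P|=2$, say $P=\{p,q\}$ with $p\in C$ and $q\notin C$, then $\gamma(C)=C$ prevents $\gamma$ from swapping $p$ and $q$, so it fixes each and the same reduction yields $\mathrm{sgn}(\gamma|_S)=+1$.

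For (ii), I will invoke the standard transitivity properties of ${\rm M}_{24}$: it has a unique orbit on codewords of each weight in $\{0,8,12,16,24\}$; the octad stabilizer $2^4:A_8$ acts transitively on the octad via its $A_8$-quotient and on the complementary $16$ points via the affine action of its $2^4$-part, producing a single orbit of pairs (octad, point inside) and a single orbit of pairs (octad, point outside); the dodecad analogues follow from the sharply $5$-transitive actions of $M_{12}$ on a dodecad and on its complement; the $|C|=16$ cases reduce to $|C|=8$ by taking complements. It follows that near-codewords $(C,\{p,q\})$ with $|C|\in\{8,12,16\}$, $p\in C$, $q\notin C$ form a single orbit in each of those three sizes, and they are in different orbits from codewords of the same size since they do not belong to $\mathcal{G}$. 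Altogether, good subsets fall into one orbit for each size in $\{0,1,7,9,11,13,15,17,23,24\}$ and two orbits for each size in $\{8,12,16\}$, matching $d_g$ cardinality by cardinality; combined with (i) and Fact \ref{invaltgen} this forces good and orientable orbits to coincide in each size.

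The main obstacle will be showing that $({\rm M}_{24})_C$ acts by even permutations on an arbitrary codeword $C$: the straightforward ${\rm M}_{24}\subset\mathfrak{A}_\Omega$ only equates the signs on $C$ and $\Omega\setminus C$, and pinning the common sign down to $+1$ requires either quoting the explicit $2^4:A_8$ and $M_{12}$ stabilizer structures, or, as above, combining $5$-transitivity and Steiner-system arithmetic with the simplicity of $M_{12}$ and the maximum-fixed-point count from Table \ref{tableM24}.
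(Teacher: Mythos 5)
Your proof is correct and follows essentially the same route as the paper: reduce the stabilizer of $S=C+P$ to that of $C$ using that nonzero codewords have weight $\geq 8$, show the codeword stabilizers act evenly on $C$ (octads via $\mathfrak{A}_8$, dodecads via ${\rm M}_{12}$), and match the resulting sixteen orbit types against the dimension count from Facts \ref{invaltm24} and \ref{invaltgen}. Two small remarks: your step (ii) is redundant, since the sixteen types are nonempty, $\mathrm{M}_{24}$-stable and pairwise orbit-disjoint, so the count $\sum_g d_g = 16$ already forces each to be a single orbit; and in the dodecad case you should quote that the image of the stabilizer in $\mathfrak{S}_C$ \emph{is} the Mathieu group ${\rm M}_{12}$ (Conway), since knowing only that it has order $|{\rm M}_{12}|$ does not by itself give simplicity or evenness.
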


\begin{proof}
  The elements of $\mathcal{G}$ have size $0,8,12,16$ or $24$.
  The $C+P$ with $C$ in $\mathcal{G}$ and $P$ a point thus have size
  $1,7,9,11,13,15,17$ or $23$, and the $C+P$ with $|P|=2$ and $|C\cap P|=1$ have
  size $8,12$ or $16$.
  If we can show that all of those subsets are ${\rm M}_{24}$-orientable, then Facts \ref{invaltm24} and \ref{invaltgen} will not only prove the lemma, but also that there is a single ${\rm M}_{24}$-orbit of subsets of each of these $16$ types. \ps
Fix $C$ in $\mathcal{G}$, denote by $G_C \subset {\rm M}_{24}$ its stabilizer and by $I_C$ the image of the natural morphism $G_C \rightarrow \mathfrak{S}_C$. If we have $C=0$ or $C=\Omega$, then $C$ is ${\rm M}_{24}$-orientable (${\rm M}_{24}$ is even a simple group). If $C$ is an octad, Conway showed that $I_C$ is the full alternating group of $C$, so that octads are ${\rm M}_{24}$-orientable. As $\Omega$ is ${\rm M}_{24}$-orientable, it follows that complements of octads are ${\rm M}_{24}$-orientable as well. If $C$ is a dodecad, Conway showed that $I_C$ is a Mathieu permutation group ${\rm M}_{12}$ over $C$, hence in the alternating group of $C$ as well (again, it is even a simple group), so that dodecads are ${\rm M}_{24}$-orientable. \ps
Fix furthermore a subset $P$ of $\Omega$, assuming first $|P|\leq 3$, and consider the subset $C + P$ in $\mathcal{P}(\Omega)$. If $\gamma$ in ${\rm M}_{24}$ preserves $C+P$, we have $$C+\gamma(C) = P + \gamma(P).$$
The left-hand side is an element in $\mathcal{G}$, hence so is $P+\gamma(P)$. But this last subset has at most $6$ elements, hence must be $0$. It follows that the stabilizer of $C+P$ is the subgroup of $G_C$ stabilizing $P$. If we assume furthermore either $|P|=1$, or $|P|=2$ and $|P \cap C|=1$, we deduce that the ${\rm M}_{24}$-orientability of $C$ implies that of $C+P$, and we are done.
\end{proof}

 The code $\mathcal{G}$ itself also embeds in ${\rm O}(\R\, \Omega)$ by letting the element $S$ of $\mathcal{G}$ act on $\nu_i$ by $-1$ if $i$ is in $S$, $1$ otherwise. As shown by Conway \cite[Chap. 10, \S 3, Thm. 26]{splag}, this is also a subgroup of ${\rm O}({\rm Leech})$, obviously normalized by ${\rm M}_{24}$. The subgroup of ${\rm O}({\rm Leech})$ generated by $\mathcal{G}$ and ${\rm M}_{24}$ is denoted by $N$ or $2^{12}{\rm M}_{24}$ by Conway. It will play a role in the proof of the following proposition. \ps

\begin{prop}\label{formesomegag} For all $g$ in $\{0,8,12,16,24\}$, the line of ${\rm O}({\rm Leech})$-invariants in $\Lambda^g {\rm Leech} \otimes \Q$ is generated by $\sigma_C$, where $C$ is any element of $\mathcal{G}$ with $|C|=g$. \end{prop}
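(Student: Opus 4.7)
The plan is to show that $\Q\,\sigma_C$ coincides both with $(\Lambda^g \Q\,\Omega)^N$ and with the ${\rm O}({\rm Leech})$-invariant line in $\Lambda^g {\rm Leech}\otimes \Q$, the two spaces being naturally identified as $N$-representations through multiplication by $\sqrt{8}$ (which is an $N$-equivariant $\Q$-linear isomorphism $\Q\otimes {\rm Leech}=\frac{1}{\sqrt{8}}\Q\,\Omega \isomo \Q\,\Omega$). By \eqref{invaltleech}, the ${\rm O}({\rm Leech})$-invariant subspace in $\Lambda^g ({\rm Leech}\otimes\Q)$ is $1$-dimensional for $g\in\{0,8,12,16,24\}$, and since $N\subset {\rm O}({\rm Leech})$ it is contained in $(\Lambda^g\Q\,\Omega)^N$. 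It will therefore suffice to show $(\Lambda^g\Q\,\Omega)^N=\Q\,\sigma_C$.

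Combining Lemma \ref{m24or} with Fact \ref{invaltgen}, the space $(\Lambda^g\Q\,\Omega)^{{\rm M}_{24}}$ is $1$-dimensional and spanned by $\sigma_C$ for $g\in\{0,24\}$, and is $2$-dimensional for $g\in\{8,12,16\}$, spanned by $\sigma_C$ together with $\sigma_{C+P}$, where $C+P$ represents the second ${\rm M}_{24}$-orbit of orientable $g$-subsets (with $|P|=2$ and $|C\cap P|=1$). Now impose $\mathcal{G}$-invariance: the group $\mathcal{G}$ acts diagonally in the canonical basis of $\Lambda^g\Q\,\Omega$, with $S'\in\mathcal{G}$ multiplying $\beta_T$ by $(-1)^{|T\cap S'|}$, so the $\mathcal{G}$-invariants are spanned by the $\beta_T$ with $T\in \mathcal{G}^\perp=\mathcal{G}$ (self-duality of the Golay code). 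Since the ${\rm M}_{24}$-orbit of $C$ consists of codewords of size $g$, the vector $\sigma_C$ is $\mathcal{G}$-invariant; whereas the support of $\sigma_{C+P}$ consists of subsets $\gamma(C+P)\notin\mathcal{G}$ (as $\mathcal{G}$ has no weight-$2$ element), so $\sigma_{C+P}$ has trivial projection to $(\Lambda^g\Q\,\Omega)^{\mathcal{G}}$. Hence $(\Lambda^g\Q\,\Omega)^N=(\Lambda^g\Q\,\Omega)^{{\rm M}_{24}}\cap (\Lambda^g\Q\,\Omega)^{\mathcal{G}}=\Q\,\sigma_C$, a nonzero line.

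The main conceptual point, and the technical crux, is that the self-duality of $\mathcal{G}$ is what collapses the a priori $2$-dimensional ${\rm M}_{24}$-invariant space (for $g\in\{8,12,16\}$) to the codeword direction $\Q\,\sigma_C$ once $\mathcal{G}$-invariance is also imposed: the non-codeword generator $\sigma_{C+P}$ is killed by the $\mathcal{G}$-projection because $\mathcal{G}$ has no short words of weight $2$. Comparing this $1$-dimensional $N$-invariant line with the $1$-dimensional ${\rm O}({\rm Leech})$-invariant line provided by \eqref{invaltleech} then finishes the proof.
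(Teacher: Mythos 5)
Your argument is correct and follows essentially the same route as the paper: the sandwich $V_g^{{\rm O}({\rm Leech})}\subset V_g^{N}\subset V_g^{{\rm M}_{24}}$, the dimension counts from \eqref{invaltleech} and Fact \ref{invaltm24} via Lemma \ref{m24or}, and the diagonal action of $\mathcal{G}$ on the basis $\beta_T$ to kill the non-codeword orbit. The only (cosmetic) difference is that you invoke self-duality $\mathcal{G}^\perp=\mathcal{G}$ and the absence of weight-$2$ codewords to describe the full $\mathcal{G}$-invariant subspace at once, where the paper tests $\sigma_{C+P}$ against the two specific code elements $C$ and $\Omega\smallsetminus C$; both reduce to the same parity facts about the Golay code.
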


\begin{proof}
  Fix $g\geq 0$ and set $V_g  \,=\, \Lambda^g \,\Q\,\Omega$.
  We have the trivial inclusions
  \[ V_g^{\rm O({\rm Leech})} \subset V_g^N \subset V_g^{{\rm M}_{24}}, \]
  the dimension of the left-hand side being given by \eqref{invaltleech}, and
  that of the right-hand side by Fact \ref{invaltm24}.
  We will show that $V_g^N$ is non-zero only for $g$ in $\{0,8,12,16,24\}$, and
  that in theses cases $V_g^N$ is generated by $\sigma_C$ for $C \in \mathcal{G}$
  with $|C|=g$ (recall from the proof of Lemma \ref{m24or} that
  $\mathrm{M}_{24}$ acts transitively on the set of such $C$'s).
  Let $S$ be an ${\rm M}_{24}$-orientable subset of $\Omega$ of the form $S=C+P$
  as in the statement of Lemma \ref{m24or}.
  If Conway's group $N$ fixes $\sigma_S$, then the element $\beta_S$ in
  \eqref{essigmas} has to be fixed by the action of $C$.
  By definition, this element of $\mathcal{G}$ acts on $\beta_S$ by
  multiplication by $(-1)^{|S\cap C|}$, so we must have $|S \cap C| \equiv 0
  \bmod 2$, hence $P=0$ or $|P|=1$ and $P \cap C = \emptyset$.
  In the latter case, the element $C' = \Omega \smallsetminus C$ of
  $\mathcal{G}$ contains $P$, so it maps $\beta_S$ to $-\beta_S$ and the
  basis $\sigma_S$ of $V_g^{{\rm M}_{24}}$ is not fixed by $N$.
We have proved  $\dim V_g^N \leq 1$ for $g$ in $\{0,8,12,16,24\}$, and $V_g^N=0$ otherwise.
Fix now $C$ in $\mathcal{G}$ and set $g=|C|$. For all $C'$ in $\mathcal{G}$ we have $|C\cap C'|\equiv 0 \bmod 2$.
This shows that $N$ acts trivially on $\sigma_C$:  we have proved $V_g^N \,= \,\Q \,\sigma_C$.
\end{proof}

The inner product ${\rm Leech} \times {\rm Leech} \rightarrow \Z$, $(x,y) \mapsto x\cdot y$, induces for each integer $g \geq 0$ an ${\rm O}({\rm Leech})$-equivariant isomorphism $\Lambda^g {\rm Leech} \otimes \Q \isomo {\rm Hom}(\Lambda^g {\rm Leech},\Q)$.
This isomorphism sends the element $v_1 \wedge v_2 \wedge \cdots \wedge v_g$, with $v_i$ in ${\rm Leech}$ for all $i$, to the alternating $g$-multilinear form on ${\rm Leech}$ defined by  $(x_1,\dots,x_g) \mapsto \det ( x_i \cdot v_j)_{1 \leq i,j \leq g}$.

\begin{defi}\label{defomegag} The element $\sigma_C$, where $C$ is any element of $\mathcal{G}$ with $|C|=g$, viewed as above as an alternating $g$-multilinear form on ${\rm Leech}$, will be denoted by $\omega_g$. It is well defined up to a sign, nonzero, and ${\rm O}({\rm Leech})$-invariant.
\end{defi}

Note that by definition, we have $\omega_0 = 1$, and $\pm \omega_{24}$ is the determinant taken in the canonical basis $\nu_i$ of $\Q \, \Omega$, or equivalently,  in a $\Z$-basis of ${\rm Leech}$ as the latter is unimodular. \ps

{For the sake of completeness, we end this section with the determination of the ring structure of the ${\rm O}({\rm Leech})$-invariants in the exterior algebra $\Lambda \,{\rm Leech} \otimes \Q$.
Denote by ${\rm m}_g$ the number of $g$-element subsets of $\mathcal{G}$.
We have ${\rm m}_0={\rm m}_{24}=1$, ${\rm m}_{8}={\rm m}_{16}=759$ and ${\rm m}_{12}=2^{12}-2-2\cdot 759=2576$.
Let us simply write $\sigma_g$ for the element $\pm \sigma_C$ with $C$ in $\mathcal{G}$ and $|C|=g$.\ps

\begin{prop}\label{anneauinv} We have $\sigma_8\, \wedge\, \sigma_8 \,\,= \,\,\pm \,30\, \sigma_{16}$ and  $\sigma_g\, \wedge\, \sigma_{24-g} \, =\, \pm \, {\rm m}_g\, \sigma_{24}$ for all $g$ in $\{0,8,12,16,24\}$. \end{prop}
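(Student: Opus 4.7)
\begin{pf}
The plan is to first observe that both wedge products are $N$-invariant, hence by the proof of Proposition~\ref{formesomegag} (where $V_d^N = \mathbb{Q}\,\sigma_d$ for $d \in \{0,8,12,16,24\}$ and $V_d^N=0$ otherwise), they must already be scalar multiples of $\sigma_{24}$ and $\sigma_{16}$ respectively. It then remains only to pin down the scalar, which I do by expanding each $\sigma_g$ in the standard wedge basis of $\Lambda\,\mathbb{Q}\,\Omega$ and counting surviving terms. Throughout, write $\mathcal{G}_g := \{C \in \mathcal{G} : |C|=g\}$; by the proof of Lemma~\ref{m24or}, $M_{24}$ is transitive on $\mathcal{G}_g$, so $\sigma_g$ is a sum $\sum_{C \in \mathcal{G}_g} \beta_C$ where the signs of the $\beta_C$ are pinned down (up to one global choice) by the $M_{24}$-equivariance of the orientations built into the orbit construction.

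For $\sigma_g \wedge \sigma_{24-g}$ the count is immediate: $\beta_C \wedge \beta_{C'} = 0$ unless $C \cap C' = \emptyset$, and $|C|+|C'|=24$ forces $C' = \Omega \setminus C \in \mathcal{G}_{24-g}$ (since $\Omega \in \mathcal{G}$). So exactly ${\rm m}_g$ terms survive, each of the form $\pm \beta_\Omega = \pm\,\sigma_{24}$. Applying any $\gamma \in M_{24}$ permutes these terms (by $M_{24}$-equivariance of the orientations) and fixes $\sigma_{24}$ (by $M_{24}$-orientability of $\Omega$); combined with $M_{24}$-transitivity on $\mathcal{G}_g$, this forces all signs to coincide, giving $\sigma_g \wedge \sigma_{24-g} = \pm\,{\rm m}_g\,\sigma_{24}$.

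For $\sigma_8 \wedge \sigma_8$ the analogous expansion yields $\sum_{(O_1,O_2)} \beta_{O_1} \wedge \beta_{O_2}$ summed over ordered pairs of disjoint octads, each term equal to $\pm\,\beta_T$ with $T := O_1 + O_2 \in \mathcal{G}_{16}$. Conversely, for fixed $T \in \mathcal{G}_{16}$ the ordered pairs with union $T$ correspond bijectively to octads $O_1 \subset T$ (the partner $O_2 := T + O_1$ being automatically an octad). A short double count of $\sum_{O} |O \cap O_0|$ and $\sum_{O} |O \cap O_0|^2$, with $O_0 := \Omega \setminus T$, using $|O \cap O_0| \in \{0,2,4,8\}$ for any two octads together with the standard values $253$ and $77$ for the number of octads through one or two prescribed points, will show that each $T$ contains exactly $30$ octads.

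The one delicate step is sign coherence. The stabiliser of $T$ in $M_{24}$ coincides with that of its complementary octad $O_0$ and acts on $T$ as $\mathrm{AGL}_4(\mathbb{F}_2)$, transitively on the $30$ octads of $T$ (which are precisely the affine hyperplanes in $T \simeq \mathbb{F}_2^4$). Together with the $M_{24}$-equivariance of the signs and the $M_{24}$-orientability of $T$, this forces the $30$ contributions at each $T$ to have a common sign, and $M_{24}$-transitivity on $\mathcal{G}_{16}$ then equalises those signs across different $T$'s. Thus $\sigma_8 \wedge \sigma_8 = \pm\,30\,\sigma_{16}$. The only real obstacle in the whole program is exactly this orientation bookkeeping; the combinatorial inputs reduce to the classical fact that a sixteen-element codeword of $\mathcal{G}$ contains $30$ octads.
\end{pf}
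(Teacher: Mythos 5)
Your proof is correct. The identity $\sigma_g\wedge\sigma_{24-g}=\pm\,{\rm m}_g\,\sigma_{24}$ is handled exactly as in the paper (only the diagonal terms $\gamma=\gamma'$ survive, and ${\rm M}_{24}$-orientability of $\Omega$ equalises the signs), so there is nothing to add there. For $\sigma_8\wedge\sigma_8=\pm\,30\,\sigma_{16}$ you take a genuinely different route: you work directly in degree $16$, grouping the surviving terms $\beta_{O_1}\wedge\beta_{O_2}$ by the $16$-element codeword $T=O_1\sqcup O_2$, counting $30$ octads per $T$ by an inclusion count, and handling sign coherence inside each $T$ via the transitivity of $\Stab(T)$ on those $30$ octads together with the ${\rm M}_{24}$-orientability of $T$ (which Lemma \ref{m24or} supplies). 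The paper instead sidesteps the degree-$16$ sign bookkeeping entirely: it computes the \emph{triple} product $\sigma_8\wedge\sigma_8\wedge\sigma_8=\pm\,|\mathcal{T}|\,\sigma_{24}$ in top degree, where only the orientability of $\Omega$ and the transitivity of ${\rm M}_{24}$ on ordered trios are needed, and then divides by the already-established relation $\sigma_8\wedge\sigma_{16}=\pm\,{\rm m}_8\,\sigma_{24}$. The two arguments consume essentially the same external input --- your transitivity of the octad stabiliser on the $30$ octads of the complement is equivalent (given transitivity on octads) to the transitivity on trios that the paper cites from Conway, and your invocation of the $\mathrm{AGL}_4(\mathbb{F}_2)$ structure on the complement of an octad is a heavier-sounding but standard way to obtain it. What your approach buys is a self-contained derivation of the number $30$ (your double count using $253$ and $77$ is easily checked and yields $n_0=30$); what the paper's approach buys is that all sign questions are settled in $\Lambda^{24}$, where orientability of $\Omega$ alone does the job. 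One small remark: for $\sigma_g\wedge\sigma_{24-g}$ the $N$-invariance is not needed, since $\Lambda^{24}\,\Q\,\Omega$ is one-dimensional; it is only for $\sigma_8\wedge\sigma_8$ that you must rule out the second ${\rm M}_{24}$-invariant in $\Lambda^{16}$, which your explicit expansion does anyway since only the $\beta_T$ with $T\in\mathcal{G}_{16}$ occur.
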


\begin{proof}
  Fix $C \subset \Omega$ of size $g$, denote by $C'$ its complement, and fix $c$
  and $c'$ respective orientations of $C$ and $C'$.
  The stabilizers of $C$ and $C'$ in ${\rm M}_{24}$ coincide, call them $G$.
  We have $\sigma_C \wedge \sigma_{C'} \,=\, \pm \, \sum_{\gamma,\gamma'
  \,\,{\text{in}}\,\, {\rm M}_{24}/G} \gamma(\beta_{c}) \wedge \gamma'
  (\beta_{c'})$.
  An element in this sum is nonzero if, and only if, we have $\gamma(C) \cap
  \gamma'(C') = \emptyset$, or equivalently $\gamma'(C)=\gamma(C)$, {\it i.e.}
  $\gamma = \gamma'$.
  We conclude the second assertion by the ${\rm M}_{24}$-orientability of
  $\Omega$ and the equality $|{\rm M}_{24}/G|={\rm m}_g$.\ps
  We now determine $\sigma_8 \wedge \sigma_8$.
  Let $\mathcal{T}$ be the set of triples $(O_1,O_2,O_3)$ where the $O_i$ are octads
  satisfying $O_1 \coprod O_2 \coprod O_3 = \Omega$ ({\it ordered trios}).
  By \cite[Chap. 10, \S 2, Thm. 18]{splag}, ${\rm M}_{24}$ acts
  transitively on $\mathcal{T}$ and we have $|\mathcal{T}|\,=\,30\, {\rm m}_8$.
  Fix $(O_1,O_2,O_3)$ in $\mathcal{T}$, an orientation $o_i$ of each $O_i$, and
  denote by $S_i$ the stabilizer of $O_i$ in ${\rm M}_{24}$. As octads are ${\rm M}_{24}$-orientable, for any
  $\gamma_1,\gamma_2,\gamma_3$ in ${\rm M}_{24}$ the element
  $t(\gamma_1,\gamma_2,\gamma_3)=\gamma_1 \beta_{o_1} \wedge \gamma_2 \beta_{o_2} \wedge \gamma_3
  \beta_{o_3}$ only depends on the $\gamma_i$ modulo $S_i$. We have
\begin{equation} \label{s8s8s8} \sigma_{o_1} \wedge \sigma_{o_2} \wedge \sigma_{o_3} \, =\,\sum_{\gamma_i \in {\rm M}_{24}/S_i}  t(\gamma_1,\gamma_2,\gamma_3).\end{equation}
 Observe that $t(\gamma_1,\gamma_2,\gamma_3)$ is nonzero
   if and only if the three octads $\gamma_1(O_1)$, $\gamma_2(O_2)$ and $\gamma_3(O_3)$
  are disjoint, in which case we have $t(\gamma_1,\gamma_2,\gamma_3) = \pm t(1,1,1) = \pm \sigma_{24}$.
  There are thus exactly $|\mathcal{T}|$ nonzero terms $t(\gamma_1,\gamma_2,\gamma_3)$ in the sum \eqref{s8s8s8}.
  Fix such a nonzero term.
  The transitivity of ${\rm M}_{24}$ on $\mathcal{T}$ shows the existence of $\gamma$ in ${\rm M}_{24}$ with $\gamma
  \gamma_i \in S_i$ for each $i$.
  As $\Omega$ is ${\rm M}_{24}$-orientable, we have
  $$t(\gamma_1,\gamma_2,\gamma_3) \,=\, \gamma t(\gamma_1,\gamma_2,\gamma_3)\,=\,t(\gamma\gamma_1,\gamma\gamma_2,\gamma \gamma_3)\,=\,t(1,1,1).$$
  (``the sign is always $+1$'').
  We have proved $\sigma_8 \wedge \sigma_8 \wedge \sigma_8\, =\, \pm\,|\mathcal{T}|\,\sigma_{24}$.
  As $\sigma_8 \wedge \sigma_8$ must be a multiple of $\sigma_{16}$, we conclude
  by the identity $\sigma_8 \wedge \sigma_{16} \,=\, \pm {\rm m}_8 \,\sigma_{24}$.
\end{proof}

\section{Fixed point lattices of some prime order elements in ${\rm M}_{24}$}\label{fixedpoints}

We keep the notations of \S \ref{constomegag}, and fix an element $c$ in ${\rm M}_{24}$ of order $p$, with $p$ an odd prime. We are interested in the fixed points lattice
$$Q = \{ v \in {\rm Leech}\, \, |\, \,  c v = v\},$$
and in its orthogonal $Q^\perp$ in ${\rm Leech}$. Let $F \subset \Omega$ the subset of fixed points of $c$ and $\mathcal{Z} \subset \mathcal{P}(\Omega)$ the set of supports of its $p$-cycles. We have $a\,+\,p\,b\, =\, 24$ with $a=|F|$, $b=|\mathcal{Z}|$, and $b \geq 1$. Those lattices are special cases of those considered in \cite{haradalang}. \ps

We denote by ${\rm I}_n \otimes \Z/p\Z$ the linking quadratic space $(\Z/p\Z)^n$ equipped with $\frac{1}{p}\Z/\Z$-valued quadratic form $\frac{1}{p} \sum_{i=1}^n x_i^2$. If $V$ is a quadratic space, we denote by $-V$ the quadratic space with same underlying group but opposite quadratic form. \ps

\begin{lemm}\label{lemmedefq}
  The lattices $Q$ and $Q^\perp$ are even, without roots, of respective ranks
  $a+b$ and $(p-1)b$, and we have ${\rm res}\, Q \,\simeq {\rm I}_b\otimes
  \Z/p\Z$ and ${\rm res} \, Q^\perp \simeq - \,{\rm res}\, Q$.
\end{lemm}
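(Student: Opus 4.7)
Plan. The rank assertions come from the Jordan decomposition of $c$ acting on $\R\,\Omega$ by coordinate permutation: the fixed subspace $W=(\R\,\Omega)^c$ admits the orthogonal basis $\{\nu_i : i\in F\}\cup\{\nu_Z : Z\in\mathcal{Z}\}$ of dimension $a+b$, so $Q={\rm Leech}\cap W$ has rank $a+b$ and $Q^\perp$ has rank $24-(a+b)=(p-1)b$. Being primitive sublattices of the even, rootless Leech lattice, both $Q$ and $Q^\perp$ are even and rootless.

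For the residues, the key tool is the averaging projector $\pi=\frac{1}{p}\sum_{k=0}^{p-1}c^k$ on ${\rm Leech}\otimes\Q$, which is self-adjoint for the Euclidean form and has image $W$. One first identifies $\pi({\rm Leech})$ with $Q^\sharp$: for $v\in{\rm Leech}$ and $u\in Q$ we have $\pi(v)\cdot u=v\cdot u\in\Z$, giving $\pi(v)\in Q^\sharp$; conversely any $w\in Q^\sharp$ is $\pi(v)$ for some $v\in{\rm Leech}$, as the functional $u\mapsto w\cdot u$ on $Q$ extends to one on Leech (by primitivity of $Q$) and is represented by a lattice vector (by unimodularity of Leech). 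Since $p\,\pi(v)=(1+c+\cdots+c^{p-1})v$ lies in $Q$ for every $v\in{\rm Leech}$, we obtain $pQ^\sharp\subset Q$, so ${\rm res}\,Q$ is an $\F_p$-vector space. The anti-isometry ${\rm res}\,Q^\perp\simeq -{\rm res}\,Q$ then follows from the standard description of Leech as a self-dual overlattice of $Q\oplus Q^\perp$: the glue quotient ${\rm Leech}/(Q\oplus Q^\perp)$ injects into both residues as the graph of an anti-isometry, and both projections are surjective because Leech is unimodular.

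The substantive step is the identification ${\rm res}\,Q\simeq I_b\otimes\Z/p\Z$ as a linking quadratic space. Here I would use the $\Z[c]/\Phi_p(c)\simeq\Z[\zeta_p]$-module structure on $Q^\perp$: $\Phi_p(c)$ kills $Q^\perp$ (since $c$ has no fixed vectors there), making $Q^\perp$ a torsion-free $\Z[\zeta_p]$-module of $\Z[\zeta_p]$-rank $b$; since $(1-\zeta_p)$ is the unique prime of $\Z[\zeta_p]$ above $p$, the Dedekind structure gives $Q^\perp/(c-1)Q^\perp\simeq\F_p^b$. Combined with the canonical identification of $(c-1){\rm Leech}/(c-1)Q^\perp$ with the glue group ${\rm Leech}/(Q\oplus Q^\perp)\simeq{\rm res}\,Q$, this yields $|{\rm res}\,Q|\cdot[Q^\perp:(c-1){\rm Leech}]=p^b$. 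The main remaining work — and the expected main obstacle — is twofold: (i) to establish $(c-1){\rm Leech}=Q^\perp$, forcing $|{\rm res}\,Q|=p^b$, and (ii) to compute the quadratic form on the residue from the identity $q(\pi(v))=\frac{1}{2p}\sum_{k=0}^{p-1}v\cdot c^kv\pmod\Z$ on generators built from the Conway description of Leech, and to check that it is the split diagonal form $\frac{1}{p}\sum x_i^2$ rather than some other $\F_p$-linking form of the same order. Both assertions are the content of the cited Harada–Lang results \cite{haradalang}, specialized to Leech with the cycle shapes $1^a p^b$ listed in Table~\ref{tableM24}; alternatively, for the five primes $p\in\{3,5,7,11,23\}$ actually occurring one can verify them case-by-case using the known explicit realizations of $Q$ (e.g.\ the Coxeter–Todd lattice $K_{12}$ for $p=3$, shape $1^6 3^6$, and the icosian lattice for $p=5$, shape $1^4 5^4$).
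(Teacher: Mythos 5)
Your routine steps are all correct: the rank count from the fixed space of $c$, evenness and rootlessness inherited from ${\rm Leech}$, the inclusion $p\,Q^\sharp \subset Q$ via the averaging projector, and the anti-isometry ${\rm res}\,Q^\perp \simeq -\,{\rm res}\,Q$ from unimodularity of ${\rm Leech}$. But the actual content of the lemma is the identification of ${\rm res}\,Q$ with ${\rm I}_b \otimes \Z/p\Z$ \emph{as a linking quadratic space} — both its order $p^b$ and its diagonal form $\frac{1}{p}\sum x_i^2$ — and on exactly these two points your proof stops: you correctly isolate them as ``the main remaining work'' ((i) $(c-1){\rm Leech} = Q^\perp$, (ii) the computation of the form on generators) and then defer both to \cite{haradalang} or to a case-by-case check against ``known explicit realizations of $Q$''. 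That is a genuine gap, not a proof. Worse, the second fallback is circular in the logic of this paper: the identification of the fixed-point lattices with the Coxeter--Todd and icosian lattices is \emph{deduced} from this lemma together with Proposition \ref{latticechar} (via the characterization by rank, absence of roots, and residue), so it cannot be used as an input here.

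The paper closes the gap by a short self-contained argument that avoids the $\Z[\zeta_p]$-module bookkeeping entirely. Since $1+c+\cdots+c^{p-1} \in p + (c-1)\Z[c]$ gives $p\,{\rm Leech} \subset Q \oplus Q^\perp$ with $p$ odd, the determinants of $Q$ and $Q^\perp$ are odd and it suffices to compute the residues after inverting $2$. Because $\Omega$ is a disjoint union of three octads, the $24$ vectors $\sqrt{2}\,\nu_i$ form an \emph{orthogonal} $\Z[\frac{1}{2}]$-basis of ${\rm Leech}[\frac{1}{2}]$, each of norm ${\rm q}=1$. Hence $Q[\frac{1}{2}]$ has the orthogonal basis $\{\sqrt{2}\,\nu_i\}_{i \in F} \cup \{\sqrt{2}\,\nu_Z\}_{Z \in \mathcal{Z}}$ with ${\rm q}(\sqrt{2}\,\nu_S)=|S|$, so its residue is visibly ${\rm I}_b \otimes \Z/p\Z$ (order \emph{and} form in one stroke), while $\frac{1}{\sqrt{2}}Q^\perp[\frac{1}{2}] \simeq {\rm A}_{p-1}^{\,b}$ gives ${\rm res}\,Q^\perp \simeq -({\rm I}_b\otimes\Z/p\Z)$ directly. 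If you want to keep your route, you must actually prove $(c-1){\rm Leech}=Q^\perp$ (e.g.\ by comparing $[Q^\perp:(c-1)Q^\perp]=p^b$ with $|\det Q^\perp|$) and then still compute the Gauss sum or explicit form to pin down the quadratic structure — neither step is optional, since several non-isometric linking forms on $(\Z/p\Z)^b$ exist.
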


\begin{proof} It is clear that $Q$ and $Q^\perp$ are even and without roots, as so is {\rm Leech}. We also have $p \,{\rm Leech} \,\subset Q \oplus Q^\perp$ because of the identity $1+c+c^2+\dots+c^{p-1} \in  p + (c-1)\Z[c]$. As ${\rm Leech}$ is unimodular and $p$ is odd, we deduce that both $\det Q$ and $\det Q^{\bot}$ are odd.
It is thus enough to prove both assertions about ${\rm res}\, Q$ and ${\rm res}\, Q^\perp$ after inverting $2$.
As $\Omega$ is the disjoint union of $3$ octads, note that the $24$ elements $\sqrt{2}\, \nu_i$ with $i \in \Omega$ form an orthogonal $\Z[\frac{1}{2}]$-basis of ${\rm Leech}[\frac{1}{2}]$. \par
On the one hand, this implies that the $a$ elements $\sqrt{2}\, \nu_i$ with $i \in F$, and the $b$ elements $\sqrt{2} \, \nu_Z$ with $Z \in \mathcal{Z}$, form an orthogonal $\Z[\frac{1}{2}]$-basis of $Q[\frac{1}{2}]$. For the quadratic form ${\rm q}(x)=\frac{x\cdot x}{2}$ and $S \subset \Omega$, we have ${\rm q}(\sqrt{2}\, \nu_S)=|S|$: we have proved the assertion about ${\rm res}\, Q$. \par
  On the other hand, this also shows that $Q^\perp[\frac{1}{2}]$ is the
  submodule of ${\rm Leech}[\frac{1}{2}]$ consisting of the $\sum_{i \in \Omega
  \smallsetminus F} x_i \, \,\sqrt{2} \, \nu_i$ with $x_i \in \Z[\frac{1}{2}]$
  satisfying $\sum_{i \in Z} x_i =0$ for any $Z$ in $\mathcal{Z}$.
  In other words $\frac{1}{\sqrt{2}}\, Q^\perp[\frac{1}{2}]$ is isomorphic to the
  root lattice ${{\rm A}_{p-1}}^b$ over $\Z[\frac{1}{2}]$.
  It follows that $\res Q^\perp[\frac{1}{2}]$ is isomorphic to $-({\rm I}_b
  \otimes \Z/p\Z)$.
  (See also \cite[Prop. B.2.2 (d)]{CheLan} for a more conceptual proof of ${\rm
  res} \, Q^\perp \simeq - \,{\rm res}\, Q$).
\end{proof}

By Table \ref{tableM24}, there are $8$ conjugacy classes of elements of odd prime order in ${\rm M}_{24}$, with respective shape $3^8$, $1^6 \, 3^6$, $1^4\, 5^4$, $1^3\, 7^3$ (two classes), $1^2\, 11^2$ and $1 \, 23$ (two classes). For our applications we are looking for cases $1^a\, p^b$ with $a+b$ in $\{8,12,16\}$ and $Q$ orientable. Only the first three conjugacy classes just listed meet the first condition, and the class with shape $3^8$ does not meet the second. Indeed, in this case, the description above of $Q[\frac{1}{2}]$ shows $x \cdot x \equiv 0 \bmod 3$ for all $x \in Q$. This implies that $\frac{1}{\sqrt{3}} \,Q$ is an even unimodular lattice of rank $8$, necessarily isomorphic to ${\rm E}_8$, hence non orientable. In \S \ref{sectlattices}, we will check that the lattice $Q$ is actually orientable for the two remaining classes $1^6\,3^6$ and $1^4 5^4$, and has the following properties:
\ps

\begin{prop}\label{latticechar} Let $g$ be $8$, $12$ or $16$. Up to isometry, there is a unique even lattice ${\rm Q}_g$ of rank $g$ without roots and with residue isomorphic to
${\rm I}_4 \otimes \Z/5\Z$ {\rm (}case $g=8,16${\rm )} or to ${\rm I}_6 \otimes \Z/3\Z$ {\rm (}$g=12${\rm )}. The lattice ${\rm Q}_g$ is orientable, and there is a unique ${\rm O}({\rm Leech})$-orbit of sublattices of ${\rm Leech}$ isometric to ${\rm Q}_g$.
\end{prop}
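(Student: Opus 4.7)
The plan is to prove Proposition \ref{latticechar} in three stages: (a) construct each ${\rm Q}_g$ explicitly as a sublattice of ${\rm Leech}$, (b) prove uniqueness of the isometry class by a gluing argument reconstructing the Leech lattice, and (c) deduce uniqueness of the ${\rm O}({\rm Leech})$-orbit of embeddings together with orientability.

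For (a), I would use the constructions of \S\ref{fixedpoints}. Fix $c \in {\rm M}_{24}$ of shape $1^6 3^6$ and set ${\rm Q}_{12} := {\rm Leech}^c$; Lemma \ref{lemmedefq} yields rank $12$, rootlessness, and residue ${\rm I}_6 \otimes \Z/3\Z$. Similarly, fix $c$ of shape $1^4 5^4$ and set ${\rm Q}_8 := {\rm Leech}^c$ and ${\rm Q}_{16} := {\rm Q}_8^\perp$. For $g=16$, the residue provided by Lemma \ref{lemmedefq} is $-{\rm I}_4 \otimes \Z/5\Z$, which is isometric to ${\rm I}_4 \otimes \Z/5\Z$ since $-1 \equiv 2^2 \pmod 5$.

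For (b), let $N$ be any even rank-$g$ rootless lattice with the prescribed residue. The strategy is to glue $N$ with a rank-$(24-g)$ rootless companion $N'$ along an anti-isometry of residues, producing an even unimodular rank-$24$ lattice $L$; then verify $L$ has no roots, whence $L \cong {\rm Leech}$ by Conway--Niemeier. For $g \in \{8,16\}$ take $N' := {\rm Q}_{24-g}$; for $g=12$ take $N' := N$ (valid since $-{\rm I}_6 \otimes \Z/3\Z \cong {\rm I}_6 \otimes \Z/3\Z$ in even rank). The critical step is the no-roots verification: any putative root in $L \smallsetminus (N \oplus N')$ has the form $x+y$ with $x \in N^\sharp$, $y \in (N')^\sharp$ both nonzero modulo their respective lattices, and satisfies $x\cdot x + y\cdot y = 2$. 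Combining the lower bound $x\cdot x, y\cdot y \geq 2/p$ from the explicit residue form, the quantization of these norms modulo $2\Z$, and the rootlessness of $N$ and $N'$ leaves only finitely many arithmetic possibilities, which I would eliminate by direct inspection. Once $L \cong {\rm Leech}$, the lattice $N$ embeds primitively in ${\rm Leech}$ and is identified with ${\rm Q}_g$ via stage (c).

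For (c), any primitive embedding of ${\rm Q}_g$ in ${\rm Leech}$ (or of ${\rm Q}_g^\perp$ for $g=16$) realizes ${\rm Q}_g$ as the fixed-point lattice of a cyclic subgroup of order $p \in \{3,5\}$ in ${\rm Co}_0 = {\rm O}({\rm Leech})$ with a prescribed character on ${\rm Leech}$ determined by the residue. Sylow's theorem together with inspection of the conjugacy classes of such cyclic subgroups of ${\rm Co}_0$ (read off from their cycle shape on the ${\rm M}_{24}$-permutation factor via Table \ref{tableM24}) yields a single ${\rm O}({\rm Leech})$-orbit. This orbit-uniqueness then implies that the restriction map ${\rm Stab}_{{\rm O}({\rm Leech})}({\rm Q}_g) \to {\rm O}({\rm Q}_g)$ is surjective; combined with the inclusion ${\rm Co}_0 \subset {\rm SL}({\rm Leech})$ and a bookkeeping of determinants on the orthogonal summand (using orientability of ${\rm Q}_{24-g}$ for $g \in \{8,16\}$, and for $g=12$ the known structure of ${\rm O}({\rm K}_{12})$ from \cite[Ch.~4, \S 9]{splag}), every element of ${\rm O}({\rm Q}_g)$ has determinant $+1$.

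The main obstacle is the no-roots verification in (b): this is a concrete but delicate numerical inspection of short vectors in $N^\sharp$ subject to the residue form and the rootlessness of $N$, which must be carried out separately for each of the three cases.
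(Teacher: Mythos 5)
Your stage (a) is fine and agrees with the paper's observation that the fixed-point lattices of Lemma \ref{lemmedefq} realize the ${\rm Q}_g$; the genuine gaps are in stages (b) and (c). The decisive one is the no-roots verification in (b). A root of the glued lattice $L$ outside $N \oplus N'$ is $x+y$ with $x \in N^\sharp\smallsetminus N$, $y\in (N')^\sharp \smallsetminus N'$ and $x\cdot x + y\cdot y =2$. The residue form only determines $x \cdot x \bmod 2\Z$ (namely $x\cdot x \in \tfrac{2j}{p}+2\Z$ for some $j$), and rootlessness of $N$ only excludes norm-$2$ vectors of $N$ itself; neither controls whether a given nontrivial coset of $N$ in $N^\sharp$ contains a vector of norm $2/5$, $4/5$, $6/5$, $8/5$ (or $2/3$, $4/3$). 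Since $N$ is an \emph{arbitrary} lattice with the stated invariants --- exactly the object you are trying to classify --- there is no finite list of possibilities you can "eliminate by direct inspection": the minima of the cosets in $N^\sharp/N$ are not determined by the hypotheses, and a Hermite-type bound does not exclude, say, a norm-$2/5$ vector in $N^\sharp$ for $g=8$. This is why the paper argues in the opposite direction: it pins down ${\rm res}\,N$ by Milgram's formula, passes to \emph{overlattices} of $N$ (inverse images of totally isotropic subspaces of ${\rm res}\,N$), identifies the saturated overlattice with ${\rm E}_g$ or ${\rm E}_g\oplus{\rm E}_g$ --- rootlessness of $N$ enters only through Coxeter-number arguments (Lemma \ref{lemm:orb_root_latt}) excluding ${\rm E}_{16}$, the ${\bf D}_{10}$-lattice and ${\rm E}_8\oplus{\rm A}_2\oplus{\rm A}_2$ --- and then classifies rootless index-$p^2$ sublattices of these root lattices by affine-Weyl-chamber combinatorics (Propositions \ref{prop:uniq_sub_E}, \ref{coreg}, \ref{EplusE}, \ref{coregdoubl}).

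Stage (c) also fails as stated, on three counts. First, the claim that every primitive copy of ${\rm Q}_g$ in ${\rm Leech}$ is the fixed-point lattice of an order-$p$ cyclic subgroup requires producing a fixed-point-free order-$p$ isometry of the orthogonal complement acting trivially on its residue, which you do not justify. Second, Table \ref{tableM24} lists conjugacy classes of ${\rm M}_{24}$, not of ${\rm Co}_0$; the latter has order-$3$ and order-$5$ classes not coming from ${\rm M}_{24}$, and Sylow's theorem gives conjugacy of Sylow subgroups, not of arbitrary order-$p$ cyclic subgroups, so the orbit count cannot be read off this way without importing $\mathbb{ATLAS}$ data for ${\rm Co}_0$ (which the paper deliberately avoids). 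The paper instead classifies the unimodular rootless overlattices of ${\rm Q}_g\oplus{\rm Q}_{24-g}$ as graphs of anti-isometries of residues and conjugates them into one another using the surjectivity of ${\rm O}({\rm Q}_{g'})\to{\rm O}({\rm res}\,{\rm Q}_{g'})$ (Propositions \ref{prop:aut_Qg} and \ref{prop:doubling}), which is itself a substantial input you would still need. Third, your orientability argument is circular for $g\in\{8,16\}$: you deduce orientability of ${\rm Q}_8$ from that of ${\rm Q}_{16}$ and vice versa. The paper breaks the cycle by proving ${\rm Q}_8$ (and ${\rm Q}_{12}$) orientable directly, via the isomorphism ${\rm O}({\rm Q}_g)\isomo{\rm O}({\rm res}\,{\rm Q}_g)$, Eichler's spinor-norm theorem and explicit generators (Proposition \ref{q8orientable}), and only then obtains ${\rm Q}_{16}$ from the Leech embedding.
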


In the remaining part of this section we explain how to deduce Theorem \ref{mainthm} from Proposition \ref{latticechar} (this proposition will only be used at the end, and not in the proof of the two following lemmas). \ps

Recall that a {\it dodecad} is an element of $\mathcal{G}$ with $12$ elements. Moreover, a subset $S \subset \Omega$ with $|S|=4$ (resp. $|S|=6$) is called a {\it tetrad} (resp. an {\it hexad}). Following Conway, we will also say that an hexad is {\it special} if it is contained in an octad, and {\it umbral} otherwise.  The umbral hexads are obtained as follows: choose $5$ points in an octad and $1$ in its complement. \ps

\begin{lemm}\label{geometrym24} \begin{itemize} \item[(i)] A tetrad $T$ is contained in exactly $5$ octads. \ps
\item[(ii)] If $\gamma$ in ${\rm M}_{24}$ is an element of order $5$ whose set of fixed points is a tetrad $T$, then the $5$ octads containing $T$ are permuted transitively by $\gamma$, and each of them intersects each orbit of $\gamma$ at exactly one point.\ps
\item[(iii)] An umbral hexad $U$ is contained in exactly $18$ dodecads; these $18$ dodecads are permuted transitively by the stabilizer  of $U$ in ${\rm M}_{24}$.\ps
\item[(iv)] Let $\gamma$ in ${\rm M}_{24}$ be an element of order $3$ with $6$ fixed points. The set $U$ of fixed points of $\gamma$ is an umbral hexad, and each dodecad containing $U$ intersects each orbit of $\gamma$ at exactly one point. Moreover, the stabilizer ${\rm G}_U$ of $U$ in ${\rm M}_{24}$ coincides with the normalizer of $\langle \gamma \rangle$ in ${\rm M}_{24}$, and the natural map ${\rm G}_U \rightarrow \mathfrak{S}_U$ is surjective with kernel $\langle \gamma \rangle$. \ps
\end{itemize}
\end{lemm}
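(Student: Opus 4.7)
The plan is to handle the four assertions (i)--(iv) in order, with (iv)(b) being the main obstacle. For (i), the Steiner property of $\mathcal{G}$ (any $5$ points of $\Omega$ lie in a unique octad) provides a $4$-to-$1$ correspondence between fifth points $p \in \Omega \setminus T$ and octads $O \supset T$ (via $p \mapsto O$, overcounted by $|O \setminus T|=4$), giving $20/4 = 5$. For (ii), $\gamma$ permutes the $5$ octads containing $T$; if $\gamma$ fixed any such $O$, it would restrict to an order-$5$ permutation of the $4$-element set $O \setminus T$, impossible. Hence $\gamma$ cyclically permutes them. Octad intersections lying in $\{0,2,4,8\}$ force any two distinct octads containing $T$ to intersect exactly in $T$, so the five sets $O_i \setminus T$ partition $\Omega \setminus T$ into blocks of size $4$. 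A $\gamma$-orbit in $\Omega \setminus T$ has size $5$ and is cyclically shifted through the five blocks, hence meets each in exactly one point.

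For (iii), I first compute that a fixed dodecad $D$ contains $792$ umbral hexads. The standard incidence counts $\sum x_k = 759$, $\sum k\, x_k = 12 \cdot 253$, $\sum \binom{k}{2} x_k = 66 \cdot 77$, where $x_k$ denotes the number of octads meeting $D$ in $k$ points ($k \in \{2,4,6\}$), solve to $x_6 = 132$; each of these $132$ octads produces a distinct special hexad $O \cap D \subset D$, so $D$ contains $924 - 132 = 792$ umbral hexads. Counting pairs $(U,D)$ with $U$ umbral in $D$ both ways then gives $2576 \cdot 792 = 113344 \cdot 18$, i.e.\ $18$ dodecads per umbral $U$. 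Transitivity of $G_U$ on these $18$ dodecads reduces to: $M_{24}$ is transitive on dodecads (Lemma~\ref{m24or}), and $M_{12} = \mathrm{Stab}_{M_{24}}(D)$ is classically known to have exactly two orbits on the $924$ hexads of $D$, namely the $132$ blocks of the $S(5,6,12)$ on $D$ and the $792$ non-blocks.

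For (iv) parts (a), (c), (d): in (a), if $U \subset O$ for some octad $O$, then $U \subset \gamma O$ as well (since $U$ is $\gamma$-fixed pointwise), and $|O \cap \gamma O| \geq 6$ forces $\gamma O = O$; the order-$3$ action of $\gamma$ on the $2$-point set $O \setminus U$ must then be trivial, producing at least $8$ fixed points and contradicting $|\mathrm{Fix}(\gamma)|=6$. For (c), (d): Table~\ref{tableM24} gives $|M_{24}|/1080 = 226688$ elements of shape $1^6\,3^6$, each with fixed set an umbral hexad by (a); with $113344$ umbral hexads, each $U$ is the fixed set of exactly $2$ such elements, necessarily a subgroup $\langle \gamma \rangle$ of order $3$. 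Non-identity elements of $M_{24}$ fixing $\geq 6$ points have shape $1^6\,3^6$ or $1^8\,2^8$ (Table~\ref{tableM24}), the latter fixing an octad, incompatible with umbrality of $U$. So the pointwise stabilizer of $U$ in $M_{24}$ is exactly $\langle \gamma \rangle$, and $|G_U|/3 = 720 = |\mathfrak{S}_6|$ yields surjectivity in (d). For (c): $\langle \gamma \rangle$ is canonically attached to $U$, so $G_U \subset N(\langle \gamma \rangle)$; conversely, any $g \in N(\langle \gamma \rangle)$ satisfies $g(U) = \mathrm{Fix}(g \gamma g^{-1}) = \mathrm{Fix}(\gamma^{\pm 1}) = U$, so $g \in G_U$.

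The hard part is (iv)(b). Given a dodecad $D \supset U$, set $D' = \gamma D$ and $D'' = \gamma^2 D$. For $x \in \Omega$, the multiplicity of $x$ in the multiset $\{D, D', D''\}$ equals $|\langle \gamma \rangle x \cap D|$, so $x \in D+D'+D''$ iff $|\langle \gamma \rangle x \cap D|$ is odd; writing $a_i$ for the number of $3$-orbits $O$ of $\gamma$ with $|O \cap D| = i$, this gives $U \subset D+D'+D''$ and $|D+D'+D''| = 6 + 3(a_1+a_3)$. Since $|D+D'+D''|$ is a code weight in $\{0,8,12,16,24\}$, we get $a_1+a_3 \in \{2,6\}$; the value $6$ forces $(a_0,a_1,a_2,a_3)=(0,6,0,0)$, the desired conclusion. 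The value $2$ would make $D+D'+D''$ a $\gamma$-invariant dodecad containing $U$, so it remains to show no such dodecad exists. A $\gamma$-invariant dodecad containing $U$ must have the form $E = U \cup O_i \cup O_j$ for two distinct $3$-orbits of $\gamma$. For each $5$-subset $S \subset U$, the octad $O_S \supset S$ is $\gamma$-invariant (by the octad-intersection argument of (a)), and $O_S \setminus U$ is a $3$-element set on which $\gamma$ acts without fixed points, hence a full $3$-orbit of $\gamma$. The map $S \mapsto O_S \setminus U$ from the $6$ five-subsets of $U$ to the $6$ three-orbits is injective (two $S, S'$ giving the same $3$-orbit would force $|O_S \cap O_{S'}| \geq 4+3 = 7$, impossible), hence bijective. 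The parity of $|E \cap O_S| = 5 + 3 \cdot |\{O_i, O_j\} \cap \{O_S \setminus U\}|$ forces exactly one of $O_i, O_j$ to equal $O_S \setminus U$ for each $S$, requiring $\{O_i, O_j\}$ to exhaust all $6$ three-orbits, which is absurd.
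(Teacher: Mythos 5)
Your parts (i), (ii) and (iv)(a) coincide with the paper's arguments. For (iv)(b) you take a genuinely different route: instead of the paper's direct argument --- for each $u\in U$ the unique octad $O_u\supset U\smallsetminus\{u\}$ is $\gamma$-stable, the six sets $Z_u=O_u\smallsetminus U$ are exactly the $\gamma$-orbits on $\Omega\smallsetminus U$, and $|O_u\cap D|=6$ immediately gives $|Z_u\cap D|=1$ --- you form $E=D+\gamma D+\gamma^2 D\in\mathcal{G}$, constrain its weight, and dispose of the residual case by showing that no $\gamma$-invariant dodecad contains $U$. Your exclusion argument in fact rediscovers the paper's bijection $S\mapsto O_S\smallsetminus U$, and once it is in hand the paper's one-line finish is available; note also that $|E\cap O_S|\in\{5,8\}$ already contradicts $|D\cap O|\in\{2,4,6\}$, so the parity discussion at the end is redundant. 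Your (iv)(c)--(d) is a clean variant of the paper's; observing that the pointwise stabilizer $\langle\gamma\rangle$ is normal in $G_U$ is a nicer reason for $G_U\subset N(\langle\gamma\rangle)$ than the paper's appeal to Table \ref{tableM24}. In (iii), your incidence-equation computation of the $132$ special hexads of $D$ replaces the paper's Steiner count $\binom{12}{5}/6$; both are fine.

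Two points need attention. First, you outsource the transitivity of $\mathrm{Stab}_{{\rm M}_{24}}(D)\simeq {\rm M}_{12}$ on the $792$ umbral hexads of $D$ to a ``classically known'' fact; this is precisely the nontrivial step the paper proves (via the observation that the two embeddings of the stabilizer of a special hexad into $\mathfrak{S}_6$ differ by an outer automorphism, because an involution of ${\rm M}_{24}$ with a fixed point fixes an octad pointwise, and no octad sits inside a dodecad). Second, and more substantively, your three division steps --- $2576\cdot 792/113344=18$, $|G_U|=|{\rm M}_{24}|/113344=2160$, and ``each umbral hexad is the fixed set of exactly $226688/113344=2$ elements of shape $1^6 3^6$'' --- all presuppose that ${\rm M}_{24}$ acts transitively on the $113344$ umbral hexads. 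The facts you cite only give transitivity on the pairs $(U,D)$, which does not suffice (an umbral hexad contained in no dodecad is not excluded a priori, and the fibers of the fixed-set map need not be constant without a single orbit). The paper fills this by quoting Conway's $6+1$-transitivity on an octad and its complement, which exhibits the umbral hexads (five points of an octad plus one point outside it) as a single orbit. With that one sentence added, your proof is complete.
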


Most of these statements are certainly well-known. We will explain how to deduce them from the exposition of Conway in \cite[Chap. 10 \S 2]{splag}.\ps

\begin{proof} Proof of (i). Recall that any $5$-element subset of $\Omega$ is contained in a unique octad.
This shows that if $T$ is a tetrad, its complement is the disjoint union of $5$ other tetrads $T_i$, uniquely determined by the property that $T \cup T_i$ is an octad for each $i$
(these six tetrads, namely $T$ and the $T_i$, form a {\it sextet} in the sense of Conway).  \ps

Proof of (ii). The element $\gamma$ permutes the five $T_i$ above since we have
$\gamma(T)=T$. Assume there is some $i$, some $x$ in $T_i$, and $k$ in
$(\Z/5\Z)^\times,$ with $\gamma^k(x) \in T_i$. Then $\gamma^k(T \cup T_i)$ is
the unique octad containing $T \cup \gamma^k(x)$, hence equals $T \cup T_i$, and
so we have $\gamma^k(T_i) = T_i$.
But this implies $|T_i| \geq 5$: a contradiction.\ps

Proof of the first assertion of (iii). Conway shows {\it loc. cit.} that ${\rm M}_{24}$ acts transitively on the octads, on the dodecads, and $6+1$ transitively on an octad and its complement, hence transitively on the umbral (resp. special) hexads as well. There are thus $759 \cdot {8 \choose 6}=21252$ special hexads in $\Omega$, and ${24 \choose 6} - 21252 = 113344$ umbral hexads. There are also $2^{12}-2-2\cdot 759 = 2576$ dodecads. Fix a dodecad $D$.
For any octad $O$, we have $|D+O| \in \{0,8,12,16,24\}$ since $D+O$ is in $\mathcal{G}$, and $|D+O| = 20-2|D \cap O|$, so $|D \cap O|$ is in $\{2,4,6\}$.
Therefore the octad $O$ containing any given $5$-element subset of $D$ has the property that $O \cap D$ is a special hexad.
In other words, {\it any $5$-element subset of $D$ is contained in a unique special hexad included in $D$}.
It follows that there are ${12 \choose 5}/6 = 132$ special hexads in $D$, hence ${12 \choose 6}-132 = 792$ umbral hexads. By counting in two ways the pairs $(U,D)$ with $U$ a umbral hexad, $D$ a dodecad, and $U \subset D$, we obtain that there are $792 \cdot 2576  / 113344 = 18$ dodecads containing a given umbral hexad, as asserted. \ps

In order to prove the second assertion in (iii), we show that the pairs $(U,D)$ as above are permuted transitively by ${\rm M}_{24}$. Fix a dodecad $D$. It is enough to show that the stabilizer $H$ of $D$ in ${\rm M}_{24}$ permutes transitively the umbral hexads of $D$. But $H$ is a Mathieu group ${\rm M}_{12}$ and is sharply $5$ transitive on $D$ by Conway. In particular, $H$ permutes transitively the special hexads of $D$. Fix $S \subset D$ a special hexad
and denote by $S'$ its complement in $D$.
The stabilizer $H_S$ of $S$ in $H$ acts faithfully both on $S$ and $S'$, and $5$ transitively on $S$,
by the sharp $5$ transitivity of $H$ on $D$. The two projections of the natural morphism
$H_S \rightarrow \mathfrak{S}_S \times \mathfrak{S}_{S'}$
are thus injective, and the first one is surjective: they are both bijective.
(This is of course compatible with the equality $|{\rm M}_{12}|/132=720$.)
By numbering $S$ and $S'$, we obtain two isomorphisms $H_S \isomo \mathfrak{S}_6$.
We claim that they differ by an outer automorphism of $\mathfrak{S}_6$.
Indeed, an element of ${\rm M}_{24}$ of order two with at least $1$ fixed point
on $\Omega$ has actually $8$ fixed points by Table \ref{tableM24}, which must
form an octad (see the beginning of \S 2.2 in \cite[Ch.\ 10]{splag}).
The group $H_S$ contains an element of order $2$ with $4$ fixed points in $S$,
but its $4$ remaining fixed points cannot lie in $D$ because no octad is
contained in $D$.
This proves the claim.
It follows that the stabilizer in $H_S$ of a point $P$ of $S$ (isomorphic to
$\mathfrak{S}_5$) acts transitively on $S'$, hence on the set of umbral hexads
in $D$ containing $S \smallsetminus P$.
Together with the fact that $H$ acts $5$ transitively on $D$, this shows that
$H$ acts transitively on the umbral hexads in $D$. \ps

Proof of (iv). If $O$ is an octad containing $U$, necessarily unique, we have
$\gamma(O)=O$, and so $\gamma$ stabilizes the two-element set $O \smallsetminus
U$ without fixed point: a contradiction.
So $U$ is an umbral hexad. For any $u$ in $U$, there is a unique octad $O_u$
containing $U \smallsetminus \{u\}$.
The six $O_u$, and the six $3$-element sets $Z_u=O_u \smallsetminus U$ are thus
preserved by any element of ${\rm M}_{24}$ fixing $U$ pointwise.
In particular, the $Z_u$ are the supports of the $3$-cycles of $\gamma$. The assertion about dodecads follows as we already explained in the proof of (iii) that any octad $O$ containing five points of a dodecad $D$ satisfies $|O \cap D|=6$.
This also shows that the pointwise stabilizer of $U$ in ${\rm M}_{24}$ is
$\langle \gamma \rangle$: a non trivial element of ${\rm M}_{24}$ with at least
$7$ fixed points has shape $1^8\,2^8$ by Table \ref{tableM24}, and as recalled
above the set of its fixed points is an octad.
Let now $G_U$ be the stabilizer of $U$ in ${\rm M}_{24}$, and $H$ the normalizer
of $\langle \gamma \rangle$.
We have $H \subset G_U$.
We know that $G_U$ has $|{\rm M}_{24}|/113344=2160$ elements.
Table \ref{tableM24} also shows that the centralizer of $\gamma$ has $1080$
elements, and that its normalizer contains an element sending $\gamma$ to
$\gamma^{-1}$, so we have $H=G_U$.
We have seen that the kernel of $G_U \rightarrow \frak{S}_U$ is $\langle \gamma
\rangle$, and we conclude that this morphism is surjective by the equality
$2160/3 = 6!$.
\end{proof}

\begin{lemm} \label{calcwgbasis}
  \begin{itemize}
    \item[(i)] Assume $c$ has shape $1^4\, 5^4$, so that $Q$ and $Q^\perp$ have
      respective ranks $8$ and $16$, and fix $v \in Q^8$ and $u \in
      (Q^\perp)^{16}$ two $\Z$-bases of these respective lattices.
      Then we have $\omega_8(v) = \pm 5$ and $\omega_{16}(u) = \pm 5$. \ps
  \item[(ii)] Assume $c$ has shape $1^6\, 3^6$, so that $Q$ has rank $12$, and
    fix $v \in Q^{12}$ a $\Z$-basis of $Q$.
    Then we have $\omega_{12}(v) = \pm 18 $.
  \end{itemize}
\end{lemm}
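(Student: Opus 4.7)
The plan is to replace the $\Z$-bases $v$ and $u$ by the convenient $\Z[\tfrac{1}{2}]$-bases of $Q[\tfrac{1}{2}]$ and $Q^\perp[\tfrac{1}{2}]$ exhibited in the proof of Lemma \ref{lemmedefq}, then to convert back via the ratio of Gram determinants. Explicitly, I take $\tilde v = (\sqrt{2}\,\nu_i)_{i\in F} \sqcup (\sqrt{2}\,\nu_Z)_{Z\in\mathcal{Z}}$, and, for $Q^\perp$ of rank $16$ in part (i), after numbering each $Z_k = \{z_{k,0},\ldots,z_{k,4}\}$, I take $\tilde u = (\alpha_{k,l})_{1\le k\le 4,\ 0\le l\le 3}$ with $\alpha_{k,l} = \sqrt{2}(\nu_{z_{k,l}} - \nu_{z_{k,l+1}})$, coming from the identification $\tfrac{1}{\sqrt{2}}Q^\perp[\tfrac{1}{2}]\simeq \mathrm{A}_4^4$ in Lemma \ref{lemmedefq}. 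A short Gram determinant calculation, using $|\mathrm{res}\,Q|=p^b$ and $\det\mathrm{A}_4 = 5$, yields change-of-basis determinants $\pm 2^{g/2}$ from $v$ to $\tilde v$ and $\pm 2^8$ from $u$ to $\tilde u$, namely $\pm 16$, $\pm 64$ and $\pm 256$ in our three cases.

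Next I expand $\omega_g = \sigma_C$ (for $C\in\mathcal{G}$ with $|C|=g$) as a signed sum over the $g$-element subsets $S$ of $\Omega$ in $\mathcal{G}$, each term being $\epsilon_S \det(\nu_{s_i}\cdot \tilde v_j)$ for some ordering of $S$. Since every entry is $\pm\sqrt{2}$ times an indicator of $F$ or of some $Z\in\mathcal{Z}$, non-vanishing of the determinant forces: for $\omega_8$ and $\omega_{12}$ evaluated on $\tilde v$, that $F\subset S$ and $S$ meets each $Z$ exactly once; for $\omega_{16}$ evaluated on $\tilde u$, that $S\cap F = \emptyset$ and $|S\cap Z_k|=4$ for every $k$, i.e.\ $S$ is the complement of an octad containing $F$. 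Lemma \ref{geometrym24} then identifies the contributing subsets as the $5$ octads through $F$ (for $\omega_8$), the $18$ dodecads through $F$ (for $\omega_{12}$), and the $5$ complements of octads through $F$ (for $\omega_{16}$). A direct block-diagonal determinant computation shows that each non-zero term has absolute value $(\sqrt{2})^g$, namely $16$, $64$ and $256$.

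The main obstacle is to show that all non-vanishing contributions carry the same sign, since then the previous steps combine to give $\omega_g$ evaluated on the auxiliary basis equal to $\pm\,(\text{number of contributing terms})\cdot(\sqrt{2})^g$, and dividing by the change-of-basis determinant yields $\omega_8(v) = \pm 5$, $\omega_{16}(u) = \pm 5$ and $\omega_{12}(v) = \pm 18$. For $\omega_8$ and $\omega_{16}$ this is immediate: $c$ permutes the five contributing subsets cyclically and preserves $\tilde v_1\wedge\cdots\wedge\tilde v_8$ (trivially) and $\tilde u_1\wedge\cdots\wedge\tilde u_{16}$ (because $\det(c|_{Q^\perp})=(\zeta_5\zeta_5^2\zeta_5^3\zeta_5^4)^4=1$), so the ${\rm O}({\rm Leech})$-invariance of $\sigma_C$ forces the five pairings to coincide. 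For $\omega_{12}$, however, $\langle c\rangle$ acts freely on the $18$ dodecads through $F$ (any $c$-stable such dodecad $D$ would satisfy $c(D\cap Z_k) = D\cap Z_k$ for each $k$, but $c$ has no fixed point in any $Z_k$), giving $6$ orbits of size $3$; signs agree within a $c$-orbit, and one must transport between orbits using the transitive action of $G_U = {\rm stab}_{{\rm M}_{24}}(F)$ from Lemma \ref{geometrym24}(iii). For $h\in G_U$ the wedge $\tilde v_1\wedge\cdots\wedge\tilde v_{12}$ is multiplied by $\mathrm{sgn}(h|_F)\cdot\mathrm{sgn}(h|_{\mathcal{Z}})$, so it suffices to show these two signs agree. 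Both permutation representations $G_U\to\mathfrak{S}_F$ and $G_U\to\mathfrak{S}_{\mathcal{Z}}$ have $\langle c\rangle$ in their kernel and so factor through $G_U/\langle c\rangle\simeq\mathfrak{S}_6$. The first is an isomorphism by Lemma \ref{geometrym24}(iv); for the second the kernel is a normal subgroup of $\mathfrak{S}_6$, hence $\{1\}$, $\mathfrak{A}_6$, or $\mathfrak{S}_6$, and the last two are excluded because any order-$5$ element of $G_U$ (existing since $5\mid|G_U|$) has cycle shape $1^4\,5^4$ on $\Omega$ and, by a direct fixed-point count using Table \ref{tableM24}, must act on $\mathcal{Z}$ as a $5$-cycle. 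We thus obtain two isomorphisms from $G_U/\langle c\rangle$ to $\mathfrak{S}_6$ differing by an automorphism of $\mathfrak{S}_6$, and since the sign character is the unique non-trivial one-dimensional representation of $\mathfrak{S}_6$ it is preserved by every automorphism; the two sign characters therefore agree.
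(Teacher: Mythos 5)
Your proof is correct. The overall frame is the paper's: pass to the auxiliary orthogonal $\Z[\tfrac12]$-basis built from the $\sqrt{2}\,\nu_i$ ($i\in F$) and $\sqrt{2}\,\nu_Z$ ($Z\in\mathcal{Z}$), compare covolumes to get the $\pm 2^{g/2}$ conversion factor, observe that only the codewords meeting each $\langle c\rangle$-orbit exactly once (equivalently, their complements, in the rank-$16$ case) contribute, count them via Lemma \ref{geometrym24}, and equalize the signs of the contributing terms by a transitive group action. You deviate in two places, both legitimately. For $\omega_{12}$, the paper gets $\det h_{|Q}=1$ for $h\in G_U$ in one line from the $G_U$-equivariant bijection $u\mapsto Z_u$ constructed in the proof of Lemma \ref{geometrym24}(iv): the permutations of $F$ and of $\mathcal{Z}$ induced by $h$ are then literally identified, so $\det h_{|Q}=\epsilon^2=1$. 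Your route — showing both permutation representations factor through $G_U/\langle c\rangle\simeq\mathfrak{S}_6$ as isomorphisms (using the order-$5$ element to rule out a kernel $\mathfrak{A}_6$ or $\mathfrak{S}_6$) and invoking that automorphisms of $\mathfrak{S}_6$ preserve the sign character — is correct but considerably heavier than needed. For $\omega_{16}$, the paper sidesteps the $16$-element codewords entirely via the identity $\omega_{16}(\cdot)=\epsilon\,\omega_{24}(\sigma_8\wedge\cdot)$, reducing everything to octads and one $24\times 24$ determinant; you instead expand $\sigma_C$ for $|C|=16$ directly against the $\mathrm{A}_4^4$ simple-root basis of $Q^\perp[\tfrac12]$ and run a block-determinant rank count to show the contributing codewords are exactly the complements of the five octads through $F$. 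Your version makes the three cases uniform and avoids introducing the wedge-with-$\sigma_8$ trick; the paper's version trades that uniformity for a shorter terminal computation.
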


\begin{proof} We first show $\omega_8(v)=\pm 5$ in (i) and $\omega_{12}(v)=\pm 18$ in (ii). If $v'=(v'_1,\dots,v'_g)$ is any $\Q$-basis of $Q \otimes \Q$, we have $\omega_g(v') \,=\,\det_v(v')\, \omega_g(v)$, and $|\det_v(v')|$ is the covolume of the lattice $\sum_i \Z v'_i$ divided by the covolume of $Q$ (that is, by $25$ or $27$). Fix from now on a basis $v'$ made of the $\sqrt{2}\, \nu_i$ with $i$ in $F$, and the $\sqrt{2}\, \nu_Z$ with $Z$ in $\mathcal{Z}$.
  We have $\det_v(v')\,=\,\pm \,2^{g/2}$, so we need to prove that $2^{-g/2}
  \,\omega_g(v')$ is $\pm 5$ in the case $g=8$, and $\pm 18$ in the case
  $g=12$.\ps
By Definition \ref{defomegag}, $\omega_g(v')$ is a sum of terms of the form $\det \,(v'_i \cdot x_j)_{1 \leq i,j \leq g}$ where $\{x_1,\dots,x_g\}$ runs over all the possible elements $C$ of $\mathcal{G}$ of size $g$, numbered in an ${\rm M}_{24}$-equivariant way. For such a determinant to be nonzero, each linear form $v \mapsto v \cdot x_i$ has to be nonzero on $Q$: the subset $C$ has thus to contain all the elements of $F$, and a point in each $Z$ in $\mathcal{Z}$. In other words, such a $C$ has to meet each of the $g$ orbits of $c$ in exactly one point. Denote by $\mathcal{C}(c)$ the set of elements of $\mathcal{G}$ of size $g$ with this property. For all $C=\{x_1,\dots,x_g\}$ in $\mathcal{C}(c)$ we have
\begin{equation} \label{calcdet} \det \,(v'_i \cdot x_j)_{1 \leq i,j \leq g} \,=\, \pm \,2^{g/2}.\end{equation}
\noindent By Lemma \ref{geometrym24} (ii) and (iv),  the set $\mathcal{C}(c)$ consists of $5$ octads (resp. $18$ dodecads) if $c$ has shape $1^4\, 5^4$ (resp. $1^6\, 3^6$), and the normalizer $G$ of $\langle c \rangle$ in ${\rm M}_{24}$ permutes $\mathcal{C}(c)$ transitively. If we fix $C=\{x_1,\dots,x_g\}$ in  $\mathcal{C}(c)$, we may thus find a $|\mathcal{C}(c)|$-element subset $\Gamma \subset G$ with
$$\omega_g(v') = \pm  \sum_{\gamma \in \Gamma}  \det \,\,( v'_i \cdot \,\gamma\, x_j)_{1 \leq i,j \leq g}.$$
We claim that the $|\Gamma|$ determinants above are equal. This will show $\omega_g(v') = \pm  |\mathcal{S}(c)| 2^{g/2}$ by \eqref{calcdet}.
For any $\gamma \in G$ we have
$$\det \,(v'_i \cdot \gamma x_j)_{1 \leq i,j \leq g} \,= \, \det \,(\gamma^{-1} v'_i \cdot  x_j)_{1 \leq i,j \leq g} \, =\,\det \gamma^{-1}_{|Q}\, \, \det \,(v'_i \cdot  x_j)_{1 \leq i,j \leq g}.$$
As $Q$ is orientable by Lemma \ref{lemmedefq} and Proposition \ref{latticechar}, we have $\det \gamma_{|Q}=1$, and we are done. We may actually avoid the use of these lemma and  proposition as follows. If $c$ has shape $1^4\, 5^4$, we may choose $\Gamma = \langle c \rangle$ by Lemma \ref{geometrym24} (ii), and we clearly have $\gamma_{|Q}={\rm id}$. If $c$ has shape $1^6\, 3^6$, the proof of Lemma \ref{geometrym24} (iv) defines a natural $G$-equivariant bijection $u \mapsto Z_u$ between $U$ and $\mathcal{Z}$. For any $\gamma \in G$ we have thus $\det \gamma_{|Q}= \epsilon^2=1$, where $\epsilon$ is the signature of the image of $\gamma$ in $\mathfrak{S}_U$.  \ps

We now prove $\omega_{16}(u)=\pm 5$ in (i).
Observe first that for any oriented octad $(O,o)$, there is a sign $\epsilon$
such that for all $u_1', \dots, u_{16}'$ in $\Q \,\Omega$ we have
\begin{equation} \label{formulaw16}
  \omega_{16}(u_1', \dots, u_{16}') \,=\, \epsilon \,\omega_{24} ( \sigma_{o}
  \wedge u_1' \wedge u_2' \wedge \dots \wedge u_{16}').
\end{equation}
Indeed, the alternating $16$-form on the right is ${\rm O}({\rm
Leech})$-invariant, as both $\sigma_o$ and $\omega_{24}$ are, so it is
proportional to $\omega_{16}$.
But if $\{u_1', \dots, u_{16}'\}$ is a $16$-element subset of $\mathcal{G}$,
both sides are equal to $\pm 1$, and we are done. \par
Choose a basis $u'$ of $Q^\perp \otimes \Q$ made of $16$ elements of the form
$\sqrt{2}\, (\nu_i-\nu_{c(i)})$ with $i$ in $\Omega \smallsetminus F =
\bigsqcup_{Z \in \mathcal{Z}} Z$ (i.e.\ choose $4$ elements $i$ in each $Z \in
\mathcal{Z}$).
Comparing covolumes as in the first case of the proof, we have to show
$\omega_{16}(u')=\pm 5 \cdot 2^8$.
Apply Formula \eqref{formulaw16} to $u' = (u_1', \dots, u_{16}')$.
If $\gamma(O)$ is an octad such that $\gamma(\beta_O) \wedge u_1' \wedge u_2'
\wedge \dots \wedge u_{16}'$ is nonzero, that octad meets at most once each $Z$
in $\mathcal{Z}$.
We have $|\mathcal{Z}|=|F|=4$ and $|O|=8$, so  $\gamma(O)$ must meet each $Z$ of
$\mathcal{Z}$ in one point and contain $F$.
By Lemma \ref{geometrym24} (i) and (ii), there are $5$ such octads, permuted
transitively by $c$.
We may choose $O$ to be one of them.
We then have
\[ \omega_{16}(u'_1,\dots,u'_{16}) \,=\, \epsilon \sum_{k \,\,\text{in}\,\,
  \Z/5\Z}\,\omega_{24} (c^k \beta_{o} \wedge u'_1 \wedge u'_2 \wedge \dots
\wedge u'_{16}). \]
Now $c$ preserves $Q^\bot$ and has determinant $1$ on it (being of order $5$),
so we have $u_1' \wedge \dots \wedge u_{16}' = c^k(u_1' \wedge \dots \wedge u_{16}')$
and the sum above is $5$ times $\omega_{24} ( \beta_{o} \wedge u'_1 \wedge
u'_2 \wedge \dots \wedge u'_{16})$ by $c$-invariance of $\omega_{24}$.
An easy computation shows that we have $\omega_{24} ( \beta_{o} \wedge u'_1
\wedge u'_2 \wedge \dots \wedge u'_{16}) = \pm\, 2^8$.
\end{proof}

We are now able to prove Theorem \ref{mainthm}, assuming Proposition \ref{latticechar}.

\begin{proof} {\it (Proposition \ref{latticechar} implies Theorem \ref{mainthm})}
  Let $\mathcal{L}_g$ be the set of sublattices of ${\rm Leech}$ isometric to
  ${\rm Q}_g$.
  This set is nonempty by Lemma \ref{lemmedefq} and we fix one of its elements,
  that we denote $Q_g$.
  By Proposition \ref{latticechar}, ${\rm O}({\rm Leech})$ acts transitively on
  $\mathcal{L}_g$, so we may find an ${\rm n}_g$-element subset $\Gamma \subset
  {\rm O}({\rm Leech})$ with $\mathcal{L}_g = \Gamma \cdot Q_g$.\par
  Fix a $\Z$-basis $u_1,\dots,u_g$ of $Q_g$, and denote by $2n$ its Gram matrix.
  The $n$-th Fourier coefficient of ${\rm F}_g$ is the sum, over all the
  $g$-uples $(v_1,\dots,v_g)$ of elements of ${\rm Leech}$ with $2n=(v_i \cdot
  v_j)_{1\leq i,j \leq g}$, of $\omega_g(v_1,\dots,v_g)$.
  There are exactly ${\rm n}_g |{\rm O}(Q_g)|$ such $g$-tuples, namely the
  $(\gamma \gamma' u_1,\dots,\gamma \gamma' u_g)$ with $\gamma \in \Gamma$ and
  $\gamma' \in {\rm O}(Q_g)$.
  The ${\rm O}({\rm Leech})$-invariance of $\omega_g$, the trivial equality
  $\omega_g(\gamma' u_1,\dots, \gamma' u_g)= (\det \gamma')
  \,\omega_g(u_1,\dots,u_g)$ for $\gamma'$ in ${\rm O}(Q_g)$, and the
  property $\det \gamma'=1$ (as $Q_g$ is orientable), imply that the
  $n$-th Fourier coefficient of ${\rm F}_g$ is ${\rm n}_g |{\rm O}(Q_g)|
  \omega_g(u_1,\dots,u_g)$.
  We conclude by Lemma \ref{calcwgbasis}.
\end{proof}

\noindent

\section{Properties of the lattices ${\rm Q}_g$}\label{sectlattices}

The aim of this section is to prove Proposition \ref{latticechar}. We make first some preliminary remarks about root lattices and their sublattices. \ps

Let $R$ be a root system in the Euclidean space $V$.
We will follow Bourbaki's definitions and notations in \cite[Chap.
VI]{bourbaki_rootsystems} and assume furthermore that we have $\alpha \cdot
\alpha =2$ for all $\alpha$ in $R$.
In particular, each irreducible component of $R$ is of type ${\bf A}_l$ ($l \geq
1$), ${\bf D}_l$ ($l\geq 3$) or ${\bf E}_l$ ($l=6,7,8$), and $R$ is identified
to its dual root system, with $\alpha^\vee = \alpha$ for all roots $\alpha$.
We denote by $Q(R)$ the even lattice of $V$ generated by $R$ and by ${\rm P}(R)$ the dual lattice ${\rm Q}(R)^\sharp$, so that we have
\[ {\rm res}\, {\rm Q}(R) \, = \, {\rm P}(R)/{\rm Q}(R). \]
It is well known that the trivial inclusion $R \subset {\rm R}({\rm Q}(R))$ is
an equality.
We will simply denote by ${\rm A}_l$, ${\rm D}_l$ and ${\rm E}_l$ for ${\rm Q}(R)$ when $R$ is ${\bf A}_l$, ${\bf D}_l$ or ${\bf E}_l$ respectively.
The {\it Weyl group} of $R$ will be denoted by ${\rm W}(R)$, and the orthogonal group
of ${\rm Q}(R)$ by ${\rm A}(R)$.
The group ${\rm W}(R)$ is the subgroup of ${\rm A}(R)$ generated by the
orthogonal symmetries ${\rm s}_\alpha(x)=x - (\alpha \cdot x) \alpha$ with
$\alpha \in R$, hence acts trivially on ${\rm res}\, {\rm Q}(R)$.
It permutes simply transitively the positive root systems $R_+$ of $R$. Fix such an $R_+$, and denote by $\{\alpha_i \, |\, i \in I\}$ its simple roots.
The $\alpha_i$ form a $\Z$-basis of ${\rm Q}(R)$, whose dual basis $\varpi_i$ (the {\it fundamental weights}) is thus a $\Z$-basis of ${\rm P}(R)$.
The {\it Weyl vector} $\rho$ associated to $R_+$ is the half-sum of elements of $R_+$, it satisfies $\rho = \sum_{i \in I} \varpi_i$.\ps\ps

Assume now $R$ is irreducible of rank $\dim V = |I| = l$; we will always identify the set $I$ with $\{1,\dots,l\}$ as in Bourbaki.
The {\it highest positive root} is the unique element $\widetilde{\alpha}$ in $R_+$ satisfying $\alpha \cdot \varpi_i \leq \widetilde{\alpha} \cdot \varpi_i$ for all $i$ in $I$ and $\alpha$ in $R$.
There are unique integers $n_i>0$ for $i=1,\dots,l$ with $\widetilde{\alpha} = \sum_{i=1}^l  n_i \alpha_i$.
Let ${\rm h}(R)$ be the Coxeter number of $R$  \cite[Chap. V \& VI]{bourbaki_rootsystems}, for $h={\rm h}(R)$ we have
\begin{equation} \label{hformulae} |R|\,=\,l\,h, \hspace{.7cm} n_1\,+\,n_2\,+\,\dots\,+n_l\, =\, h-1\hspace{.7cm}\text{and}\hspace{.7cm} \rho \cdot \rho\, =\, \frac{l}{12}\,h\,(h+1).\end{equation}
Indeed, the first equality is \cite[Chap. V\, \S 6\, Thm. 1]{bourbaki_rootsystems} and the second is  \cite[Chap. VI \,\S 1\, Prop. 31]{bourbaki_rootsystems} (see also \cite[Theorem 8.4]{kostanttds}).
The last equality may either be checked case by case, using the ${\rm ADE}$ classification, or deduced from \cite{kostanttds}.
\footnote{We may argue as follows. Recall that the {\it height} of the positive root $\alpha \in R_{+}$ is ${\rm ht}(\alpha)=\rho \cdot \alpha$.
We thus have $2\, \rho \cdot \rho\, = \,\sum_{\alpha \in R_{+}} {\rm ht}(\alpha)$.
By Bourbaki's theory of the canonical bilinear form  \cite[Ch.\ VI \S
1.12]{bourbaki_rootsystems} we also have ${\rm h}(R) \rho \cdot \rho  \,=\, \sum_{\alpha \in R_+} {\rm ht}(\alpha)^2$
as $R$ is of type ${\rm ADE}$. Let ${\rm Exp}(R)$ be the set of exponents of $R$  \cite[Ch.\ V \S 6\, D\'ef. 2]{bourbaki_rootsystems}.
By Kostant {\it loc. cit.}, we have for any map $f : \Z_{\geq 1} \rightarrow \R$ the identity
$\sum_{\alpha \in R_+} f({\rm ht}(\alpha)) = \sum_{m \in {\rm Exp}(R)} F(m)$
with $F(m)=\sum_{u=1}^m f(u)$ (see \cite[p. 82]{CheLan}). We apply this to $f(x)=x$ and $f(x)=x^2$. And using the involution $m \mapsto h-m$
of ${\rm Exp}(R)$ \cite[Ch.\ V \S 6.2]{bourbaki_rootsystems}, we obtain two linear relations between $\rho \cdot \rho$ and $\sum_{m \in {\rm Exp}(R)} m^2$.
Inverting the system gives the result.}
Recall ${\rm h}({\bf A}_l)\,=\,l+1$, ${\rm h}({\bf D}_l)\,=\,2l-2$, ${\rm h}({\bf E}_6)\,=\,12$, ${\rm h}({\bf E}_7)\,=\,18$ and ${\rm h}({\bf E}_8)=30$.
Following Borel-de Siebenthal and Dynkin, the sublattice $${\rm BS}_i(R)=\{x \in {\rm Q}(R) \, | \, x \cdot \varpi_i \equiv 0 \bmod n_i\}$$
is the root lattice ${\rm Q}(R_i)$ where $R_i$ is the root system of $V$ having as a set of simple roots $-\widetilde{\alpha}$ and the $\alpha_j$ with $j \neq i$ \cite[Chap. VI, \S 4, Exercise 4]{bourbaki_rootsystems}.
The Dynkin diagram of $R_i$ is thus obtained by removing $\alpha_i$ from the
extended Dynkin diagram of $R$.
We clearly have ${\rm W}(R_i) \subset {\rm W}(R)$.
The fundamental weights of $R_i$ with respect to the simple roots above are $-\frac{1}{n_i}\varpi_i$ and the $\varpi_j - \frac{n_j}{n_i} \varpi_i$ for $j \neq i$;
in particular, the corresponding Weyl vector of $R_i$ is $\rho - \frac{h}{n_i} \varpi_i$.

\ps

Observe that for any integer $p\geq 1$, we have an ${\rm A}(R)$-equivariant
isomorphism
\begin{align*}
  {\rm P}(R) \otimes \Z/p\Z & \overset{\sim}{\longrightarrow} {\rm Hom}({\rm
  Q}(R),\Z/p\Z) \\
  \xi & \longmapsto (x \mapsto \xi \cdot x\, \bmod p).
\end{align*}
Assertion (ii) and (iii) below are Propositions 3.4.1.2 and 3.2.4.8 in
\cite{CheLan} (see also \cite{kostanttds}).

\begin{lemm} \label{lemm:orb_root_latt}
Let $R$ be an irreducible root system, $h={\rm h}(R)$, and $p \geq 1$ an integer. \ps
\begin{itemize}
\item[(i)] Each ${\rm W}(R)$-orbit in ${\rm P}(R) / p\, {\rm Q}(R)$ admits a unique representative of the form $\sum_i  m_i \varpi_i$ with $m_i \geq 0$ for all $i$ and $\sum_i m_i n_i \leq p$. \ps
\item[(ii)] The kernel of any linear form ${\rm Q}(R) \rightarrow \Z/p\Z$ with $p <  h$ contains some element of $R$.
\item[(iii)] There is a unique $\mathrm{W}(R)$-orbit
  of linear forms $\rmQ(R) \to \Z/h\Z$ whose kernel does not contain any root, namely the orbit of the form $x \mapsto \rho \cdot x \,\bmod h$.
\end{itemize}
\end{lemm}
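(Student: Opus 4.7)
The plan is to prove all three statements by passing through the $\mathrm{W}(R)$-equivariant isomorphism $\mathrm{P}(R) \otimes \Z/p\Z \isomo \Hom(\rmQ(R),\Z/p\Z)$ displayed just before the lemma, so that every linear form $\varphi$ corresponds to a class $\xi \in \mathrm{P}(R)/p\,\rmQ(R)$, and $\mathrm{W}(R)$-orbits of $\varphi$'s biject with orbits of the affine group $\Gamma_p := \mathrm{W}(R) \ltimes p\,\rmQ(R)$ acting on $\mathrm{P}(R) \subset V$ (using $p\,\rmQ(R) \subset \mathrm{P}(R)$ to ensure $\mathrm{P}(R)$ is $\Gamma_p$-stable).

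For (i), I will invoke the standard theorem on affine Weyl groups \cite[Ch.\ VI, \S 2]{bourbaki_rootsystems}: the closed alcove
\[ pA \,=\, \{x \in V \mid \alpha_i \cdot x \geq 0 \text{ for all } i \in I,\ \widetilde\alpha \cdot x \leq p\} \]
is a strict fundamental domain for the action of $\Gamma_p$ on $V$. Writing $x = \sum_i m_i \varpi_i$ and using both $\alpha_j \cdot \varpi_i = \delta_{ij}$ (valid since $R$ is of type $\mathrm{ADE}$, so $\alpha_j^\vee = \alpha_j$) and $\widetilde\alpha \cdot \varpi_i = n_i$, one reads off that $pA \cap \mathrm{P}(R)$ consists exactly of the $\sum_i m_i \varpi_i$ with $m_i \in \Z_{\geq 0}$ and $\sum_i n_i m_i \leq p$; this gives (i).

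For (ii) and (iii), let $\varphi$ correspond to its unique representative $\xi = \sum_i m_i \varpi_i$ from (i). The key computations are
\[ \varphi(\alpha_j) \equiv m_j \pmod p \quad \text{and} \quad \varphi(\widetilde\alpha) \equiv \sum_i n_i m_i \pmod p. \]
If some $m_j = 0$ then $\alpha_j \in \ker \varphi$; otherwise all $m_i \geq 1$, so the identity $\sum_i n_i = h-1$ from \eqref{hformulae} forces $h-1 \leq \sum_i n_i m_i \leq p$. For (ii), the hypothesis $p < h$ then requires $p = h-1$ and $m_i = 1$ for all $i$, whence $\varphi(\widetilde\alpha) \equiv h-1 = p \equiv 0 \pmod p$ puts $\widetilde\alpha$ in the kernel. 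For (iii), taking $p = h$ leaves $\sum_i n_i m_i \in \{h-1,h\}$: the value $h$ is excluded because it places $\widetilde\alpha$ in $\ker\varphi$, so the only valid $\xi$ is $\rho = \sum_i \varpi_i$, proving uniqueness. Existence in (iii) will follow by noting that $\rho \cdot \alpha$ equals $\pm$ the height of the corresponding positive root, and that heights lie in $\{1,\dots,h-1\}$ since the highest root has height $\sum_i n_i = h-1$; consequently no root lies in the kernel of $x \mapsto \rho \cdot x \bmod h$.

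No serious obstacle is expected: once the alcove theorem is invoked, the argument reduces to a short bookkeeping exercise combining the identity $\sum_i n_i = h-1$ with the constraint in (i). The only mild verifications are the $\Gamma_p$-stability of $\mathrm{P}(R)$ and the fact that rescaling by $1/p$ identifies $\Gamma_p$ with Bourbaki's affine Weyl group, both immediate.
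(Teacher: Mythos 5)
Your proof is correct and follows essentially the same route as the paper's: part (i) is the alcove/fundamental-domain theorem for the affine Weyl group (the paper rescales by $1/p$ where you rescale the alcove by $p$, which is the same thing), and (ii)--(iii) are the same bookkeeping with the distinguished representative, the identity $\sum_i n_i = h-1$, and the fact that heights of positive roots lie in $\{1,\dots,h-1\}$. The only cosmetic difference is that you argue (ii) by direct case analysis on the $m_j$ while the paper argues contrapositively.
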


\begin{proof}
  The set $\Pi$ of $v \in V$ with $v \cdot \alpha_i \geq 0$ for all $i$, and
  with $v \cdot \widetilde{\alpha} \leq 1$, is a fundamental domain for the
  affine Weyl group ${\rm W}_{\rm aff}(R)={\rm Q}(R) \rtimes {\rm W}(R)$ acting
  on $V$ \cite[Chap. VI \S 2]{bourbaki_rootsystems}.
  For any $\xi$ in ${\rm P}(R)$, the ${\rm W}_{\rm aff}(R)$-orbit of
  $\frac{1}{p}\xi$ meets thus $\Pi$ in a unique element: this proves (i).
  Any linear form $\varphi : {\rm Q}(R) \rightarrow \Z/p\Z$ may be written
  $\varphi(x) = \xi \cdot x \bmod p$ for some $\xi \in {\rm P}(R) /p\, {\rm
  Q}(R)$.
  Replacing $\varphi$ by $w(\varphi)$ for some $w \in {\rm W}(R)$ we may assume
  $\xi$ has the form $\sum_i  m_i \varpi_i$ with the $m_i$ as in (i).
  If the kernel of $\varphi$ contain neither the $\alpha_i$ nor
  $\widetilde{\alpha}$, we must have $m_i>0$ for all $i$ and $\sum_i m_i n_i
  <p$, and thus $h -1 = \sum_i n_i  \leq \sum_i m_i n_i < p$.
  This proves (ii). In the case $p=h$ this inequality implies $m_i=1$ for each $i$, hence $\xi = \sum_i \varpi_i = \rho$.
  For any positive root $\alpha$ in $R$ we have $0<\alpha \cdot \rho \leq \widetilde{\alpha} \cdot \rho = h-1$.
  As we have $R= {\rm R}({\rm Q}(R))$, this shows (iii).
\end{proof}

A root system $R$ is called {\it equi-Coxeter} if its irreducible components all
have the same Coxeter number, called the Coxeter number of $R$, and denoted by
${\rm h}(R)$.

 \begin{coro}\label{corequicoxh} Let $R$ be an equi-Coxeter root system of rank $l$ and Coxeter number $h$.
 Then assertion (iii) of Lemma \ref{lemm:orb_root_latt} holds and there is a unique ${\rm W}(R)$-orbit of sublattices $L \subset {\rm Q}(R)$ with no root and ${\rm Q}(R)/L \simeq \Z/h\Z$.
 These lattices are of the form $\{ x \in {\rm Q}(R)\, \, |\, \, x \cdot \rho \equiv 0 \bmod h\}$
for a Weyl vector $\rho$ for $R$.
Assuming furthermore $\rho \in {\rm Q}({\rm R})$, $h$ odd and $l(h+1) \equiv 0 \bmod 12$, they satisfy  ${\rm res}\, L\, \simeq \,{\rm H}(\Z/h\Z)\, \bot\, {\rm res}\, {\rm Q}(R)$.
\end{coro}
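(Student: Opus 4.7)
The plan is to handle assertions (1)--(3) by reducing to the irreducible case of Lemma \ref{lemm:orb_root_latt}, and to establish (4) by constructing an explicit orthogonal decomposition of $L^{\sharp}/L$.

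For (1)--(3), I decompose $R = \bigsqcup_i R_i$ into irreducible components, all of Coxeter number $h$, so that ${\rm Q}(R) = \bigoplus_i {\rm Q}(R_i)$, ${\rm W}(R) = \prod_i {\rm W}(R_i)$, and $\rho = \sum_i \rho_i$. A linear form $\varphi : {\rm Q}(R) \to \Z/h\Z$ has root-free kernel iff each restriction $\varphi_i := \varphi|_{{\rm Q}(R_i)}$ does; Lemma \ref{lemm:orb_root_latt}(ii) applied to $R_i$ with each $p<h$ forces $\varphi_i$ to be surjective, and then Lemma \ref{lemm:orb_root_latt}(iii) gives that $\varphi_i$ is ${\rm W}(R_i)$-conjugate to $x \mapsto \rho_i \cdot x \bmod h$. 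Since two surjections ${\rm Q}(R) \to \Z/h\Z$ have the same kernel iff they differ by an element of $(\Z/h\Z)^{\times}$, this yields both the extended assertion (iii) and the uniqueness of the orbit in (2), with $\{x \in {\rm Q}(R) : \rho \cdot x \equiv 0 \bmod h\}$ as representative.

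For (4), take this explicit $L$. The identity $\rho \cdot \rho = lh(h+1)/12$ from \eqref{hformulae} combined with $\rho \in {\rm Q}(R)$ and $12 \mid l(h+1)$ yields $\rho \in L$; an index count then gives $L^{\sharp} = {\rm P}(R) + \Z\,\rho/h$ and $|L^{\sharp}/L| = h^{2}\,|\res\,{\rm Q}(R)|$. Inside $L^{\sharp}/L$ the subgroup $I := {\rm Q}(R)/L \simeq \Z/h\Z$ is isotropic (${\rm Q}(R)$ being even), $I^{\perp} = {\rm P}(R)/L$ (from ${\rm P}(R)\cdot{\rm Q}(R) \subset \Z$), and the induced form on $I^{\perp}/I = \res\,{\rm Q}(R)$ is nondegenerate. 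The crucial step is to lift a generator of $L^{\sharp}/{\rm P}(R)$ to an isotropic element; I try $u := \rho/h + c\,\alpha_1$ with $\alpha_1$ a simple root, yielding $q(u) \equiv l(h+1)/(24h) + c/h \pmod{\Z}$, so the existence of an integer $c$ making $u$ isotropic is equivalent to $24 \mid l(h+1)$.

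The main obstacle is this upgrade from $12$ to $24$ in the divisibility hypothesis. Since the only simply-laced irreducible root systems with odd Coxeter number are the $\mathbf{A}_{h-1}$, one has $R \simeq \mathbf{A}_{h-1}^{k}$ and $l(h+1) = (h^{2}-1)k$; the factorisation $h^{2}-1 = (h-1)(h+1)$ as a product of two consecutive even integers gives $8 \mid h^{2}-1$, and the upgrade follows. Given such $u$, I claim $M := \Z\,[u] + \Z\,[\alpha_1]$ has order $h^{2}$: indeed $[u]$ has order exactly $h$ in $L^{\sharp}/{\rm Q}(R)$ (the coefficient of $\alpha_1$ in $\rho$ being coprime to $h$), so any relation $k[u] = j[\alpha_1]$ forces $h \mid k$ and then both sides vanish via $h[u]=0$. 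Both generators are isotropic and pair to $1/h$, so $M \simeq {\rm H}(\Z/h\Z)$ is nondegenerate; its orthogonal $M^{\perp} = P_{0}/L$ (with $P_{0} := \{v \in {\rm P}(R) : \rho\cdot v \in h\Z\}$) is identified with $\res\,{\rm Q}(R)$ via the composition $P_{0} \hookrightarrow {\rm P}(R) \twoheadrightarrow {\rm P}(R)/{\rm Q}(R)$, which has kernel exactly $L$ and preserves the quadratic form $v \mapsto v\cdot v/2 \bmod \Z$. This gives $L^{\sharp}/L = M \oplus M^{\perp} \simeq {\rm H}(\Z/h\Z) \perp \res\,{\rm Q}(R)$.
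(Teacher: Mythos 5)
Your proof is correct. The first part (decomposing into irreducible components, applying Lemma \ref{lemm:orb_root_latt} (ii)--(iii) componentwise, and passing from linear forms to their kernels) is essentially the paper's argument, which it dismisses as a trivial consequence of (iii) together with $\rho \cdot \alpha = 1$ for a simple root. For the last assertion you take a genuinely different route. The paper merely notes that $\rho\cdot\rho = lh(h+1)/12 \equiv 0 \bmod h$ makes $\rho$ an isotropic vector of ${\rm Q}(R)\otimes \Z/h\Z$ and then invokes the general Lemma \ref{lemm:sub_sup_latt_isot} (ii): the surjection $y \mapsto y\cdot\rho \bmod h$, being defined on all of $L^{\sharp}$, exhibits ${\rm Q}(R)/L$ as an isotropic \emph{direct summand} of odd order in ${\rm res}\, L$, and part (i) of that lemma supplies an isotropic complement by an abstract argument (writing a symmetric form as $x\cdot\varphi(y)$ and halving it, which is where oddness is used), so no generator of the hyperbolic plane is ever named. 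You instead exhibit the hyperbolic plane concretely, spanned by $[\alpha_1]$ and the corrected lift $[\rho/h + c\alpha_1]$; the price is that isotropy of your lift needs $24 \mid l(h+1)$ rather than the stated $12 \mid l(h+1)$, and you correctly close that gap by observing that $h$ odd forces $R \simeq k\,\mathbf{A}_{h-1}$, whence $8 \mid h^2-1$ and thus $24 \mid l(h+1)$. Your order computations, the identification $M^{\perp}=P_0/L$, and the isometry $P_0/L \simeq {\rm res}\,{\rm Q}(R)$ all check out. What the paper's route buys is brevity and reusability (the splitting lemma is stated once and, as the paper says, ``used several times''); what yours buys is an explicit isotropic generator of the hyperbolic summand and independence from the abstract complement construction, at the cost of an extra arithmetic observation tied to the classification of odd-Coxeter-number ${\rm ADE}$ systems.
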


\begin{pf} The first assertion is a trivial consequence of (iii) of Lemma \ref{lemm:orb_root_latt} and of $\rho \cdot \alpha =1$ for a simple root $\alpha$ of $R$.
The identity $\rho \cdot \rho\,=\,l\, h(h+1)/12$ (a consequence of \eqref{hformulae}) shows that $\rho$ is a nonzero isotropic vector in ${\rm Q}(R) \otimes \Z/h\Z$, so the last assertion follows from the general Lemma \ref{lemm:sub_sup_latt_isot} below. \end{pf}

We have ${\rm res} \,{\rm A}_n \,\simeq \,\Z/(n+1)\Z$ with ${\rm q}(\overline{1}) \equiv \frac{n}{2(n+1)} \bmod \Z$,
${\rm res} \,{\rm E}_6 \,\simeq \,- {\rm res} \,{\rm A}_2$,  ${\rm res}\, {\rm E}_8\,=\,0$. As $-1$ is a square modulo $5$, Corollary \eqref{corequicoxh} implies:

\begin{corodefi} Let $R$ be either $2\,{\bf A}_4$ or $3\, {\bf A}_2$, and set $p\,=\,{\rm h}(R)$ (either $5$ or $3$) and $g \,=\, {\rm rank}\, R$ (either $8$ or $6$).
Define ${\rm Q}_g$ as the sublattice of ${\rm Q}(R)$ whose elements $x$ satisfy $x \cdot \rho \equiv 0 \bmod p$, for a fixed Weyl vector $\rho$ in ${\rm Q}(R)$. Then ${\rm Q}_g$ is an even lattice, without roots, satisfying ${\rm res}\, {\rm Q}_g\, \simeq \,{\rm res}\, {\rm E}_g \,\oplus \,{\rm H}(\Z/p\Z)^2$.
\end{corodefi}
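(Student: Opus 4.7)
The plan is to apply Corollary~\ref{corequicoxh} to $R$ and then identify the residue explicitly. First I would verify the four hypotheses of that corollary in each of the two cases. The equi-Coxeter condition is immediate with $h = p$. The Weyl vector $\rho$ of $R$ is a sum of Weyl vectors of the irreducible components, so it lies in $\rmQ(R)$ as soon as $\rho_{\mathbf{A}_n} \in \rmQ(\mathbf{A}_n)$ for $n \in \{2,4\}$, which reduces to the congruence $\sum_{i=1}^n i \equiv 0 \pmod{n+1}$ (valid for both $n=2$ and $n=4$). Next, $h = p$ is odd, and the divisibility $l(h+1) = g(p+1)$ gives $48$ and $24$, both divisible by $12$. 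Evenness of $\rmQ_g$ is automatic as a sublattice of the even lattice $\rmQ(R)$, and the absence of roots together with $\res \rmQ_g \simeq \mathrm{H}(\Z/p\Z) \oplus \res \rmQ(R)$ are then direct consequences of Corollary~\ref{corequicoxh}.

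It thus remains to identify $\res \rmQ(R)$ with $\res \rmE_g \oplus \mathrm{H}(\Z/p\Z)$ as linking quadratic spaces. For $R = 2\mathbf{A}_4$ I would exploit the observation that $-1 \equiv 2^2 \pmod{5}$: multiplication by $2$ gives an isomorphism $\res \mathbf{A}_4 \overset{\sim}{\to} -\res \mathbf{A}_4$, whence $(\res \mathbf{A}_4)^2 \simeq \res \mathbf{A}_4 \oplus (-\res \mathbf{A}_4)$. The diagonal is a Lagrangian in this sum, forcing it to be the hyperbolic linking quadratic space $\mathrm{H}(\Z/5\Z)$; combined with $\res \rmE_8 = 0$ this yields the desired isomorphism in the case $g = 8$.

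For $R = 3\mathbf{A}_2$ the same shortcut is unavailable because $-1$ is not a square modulo $3$. Instead I would exhibit by hand the nonzero isotropic vector $(1,1,1) \in (\res \mathbf{A}_2)^3 \simeq (\Z/3\Z)^3$, whose $q$-value is $3 \cdot (1/3) \equiv 0 \pmod{\Z}$, and apply the standard splitting principle: any nondegenerate linking quadratic form over $\F_p$ containing an isotropic line splits off a copy of $\mathrm{H}(\Z/p\Z)$ as an orthogonal summand. This gives $(\res \mathbf{A}_2)^3 \simeq V_0 \oplus \mathrm{H}(\Z/3\Z)$ with $V_0$ of order $3$. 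A vector orthogonal to $(1,1,1)$, for instance $(1,-1,0)$, has $q$-value $2/3 \equiv -1/3 \pmod{\Z}$, so $V_0 \simeq -\res \mathbf{A}_2 \simeq \res \rmE_6$, finishing the case $g=6$.

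I expect the main obstacle to be this last identification in the $3\mathbf{A}_2$ case: unlike for $2\mathbf{A}_4$, the $V \simeq -V$ shortcut is lost, so one must produce an isotropic vector by hand and carefully check that the one-dimensional orthogonal complement of the hyperbolic plane carries the minus sign matching $\res \rmE_6 = -\res \mathbf{A}_2$, rather than $\res \mathbf{A}_2$ itself. Everything else is either routine verification of hypotheses or a direct invocation of Corollary~\ref{corequicoxh}.
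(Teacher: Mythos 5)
Your proposal is correct and follows exactly the route the paper intends: verify the hypotheses of Corollary \ref{corequicoxh} (equi-Coxeter with $h=p$, $\rho\in{\rm Q}(R)$, $h$ odd, $l(h+1)\equiv 0 \bmod 12$) and then identify ${\rm res}\,{\rm Q}(R)$ with ${\rm res}\,{\rm E}_g\oplus{\rm H}(\Z/p\Z)$, using that $-1$ is a square mod $5$ for $2\,\mathbf{A}_4$ and an explicit isotropic vector such as $(1,1,1)$ for $3\,\mathbf{A}_2$. The paper leaves all of this to the reader (it only records the residues of ${\rm A}_n$, ${\rm E}_6$, ${\rm E}_8$ and the remark about $-1$ mod $5$), and your sign check that the rank-one complement in the $3\,\mathbf{A}_2$ case is $-{\rm res}\,{\rm A}_2\simeq{\rm res}\,{\rm E}_6$ is the one detail that genuinely needs care; you got it right.
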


\noindent

\begin{prop} \label{prop:uniq_sub_E} Assume either $p=5$ and $E$ is the root lattice ${\rm E}_8$, or $p=3$ and $E$ is the root lattice ${\rm E}_6$.
Up to isometry, there is a unique triple of even lattices $(A,B,C)$ with $A \subset B \subset C$, both inclusions of index $p$, $C \simeq E$ and ${\rm R}(A) = \emptyset$.
\end{prop}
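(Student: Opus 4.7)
The plan is to prove existence by invoking the Corollary-Definition just above, and then to prove uniqueness in three steps: (i) extract a Coxeter-number constraint on ${\rm R}(B)$ from the hypothesis $R(A) = \emptyset$; (ii) classify index-$p$ sublattices of $E$ via Lemma \ref{lemm:orb_root_latt} and show that only the Borel--de Siebenthal sublattice ${\rm Q}(R)$ with $R = 2\mathbf{A}_4$ (case $p = 5$) or $R = 3\mathbf{A}_2$ (case $p = 3$) meets this constraint; (iii) apply Corollary \ref{corequicoxh} inside $B = {\rm Q}(R)$ to recover $A \simeq {\rm Q}_g$. Existence is immediate: the triple $({\rm Q}_g, {\rm Q}(R), E)$ with $R$ as above satisfies all the requirements, with both inclusions of index $p$ by construction.

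For uniqueness I would fix an isometry $C \simeq E$ and write $B = \ker \varphi$, $A = \ker \psi$ for surjections $\varphi : E \twoheadrightarrow \Z/p\Z$ and $\psi : B \twoheadrightarrow \Z/p\Z$. The condition $R(A) = \emptyset$ is equivalent to $\psi$ being nonzero on every root of $B$. Writing ${\rm R}(B) = \bigsqcup_j R_j$ as a disjoint union of irreducible components and restricting $\psi$ to ${\rm Q}(R_j) \subset B$, Lemma \ref{lemm:orb_root_latt}(ii) forces ${\rm h}(R_j) \leq p$ for each $j$. In particular, every irreducible component of ${\rm R}(B)$ is of type $\mathbf{A}_l$ with $l \leq 4$ (case $p = 5$) or $l \leq 2$ (case $p = 3$).

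Next, by Lemma \ref{lemm:orb_root_latt}(i), after acting by ${\rm W}(E)$ I may assume $\varphi(x) = \xi \cdot x \bmod p$ with $\xi = \sum_i m_i \varpi_i$, $m_i \geq 0$, and $\sum_i m_i n_i \leq p$. With the Bourbaki marks $(n_1,\dots,n_8) = (2,3,4,6,5,4,3,2)$ for $E_8$ and $(n_1,\dots,n_6)=(1,2,2,3,2,1)$ for $E_6$, this leaves a short finite list of $\xi$'s to consider. For $\xi = \varpi_i$ with $n_i = p$ (namely $i=5$ for $E_8$ and $i=4$ for $E_6$), the Borel--de Siebenthal description just before Lemma \ref{lemm:orb_root_latt} identifies $B$ with ${\rm Q}(R)$ for $R = 2\mathbf{A}_4$ or $R = 3\mathbf{A}_2$ respectively, and ${\rm R}(B) = R$ has each component of Coxeter number exactly $p$. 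For every other $\xi$ in the alcove, the sub-root-system ${\rm R}(B)$ contains the parabolic subsystem of ${\rm R}(E)$ generated by the simple roots $\alpha_j$ with $m_j = 0$; inspection of the Dynkin diagram of $E$ shows that this parabolic always has at least one connected component of Coxeter number $>p$ (for instance $\xi = \varpi_1$ in $E_6$ produces a $\mathbf{D}_5$ component with $h=8$, $\xi = \varpi_1 + \varpi_6$ in $E_6$ produces a $\mathbf{D}_4$ component with $h=6$, $\xi = \varpi_7$ in $E_8$ produces an $\mathbf{E}_6$ component with $h=12$, and so on). This contradicts the constraint of the previous paragraph, so $B$ is ${\rm W}(E)$-conjugate to ${\rm Q}(R)$ and in particular isometric to it.

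Finally, given $B \simeq {\rm Q}(R)$, the form $\psi$ gives an index-$p$ sublattice $A \subset {\rm Q}(R)$ with no roots; since $R$ is equi-Coxeter of Coxeter number $p$, Corollary \ref{corequicoxh} yields a unique ${\rm W}(R)$-orbit of such sublattices, represented by ${\rm Q}_g$ according to the Corollary-Definition just above. Therefore $A \simeq {\rm Q}_g$, and the triple $(A, B, C)$ is uniquely determined up to isometry. The main obstacle is the case-by-case enumeration in step (ii): one must check that for each non-Borel--de Siebenthal alcove representative $\xi$, the root system ${\rm R}(B)$ contains a parabolic component of Coxeter number $>p$. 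Fortunately, the alcove conditions $\sum_i m_i n_i \leq p$ with $p \in \{3,5\}$ and the small marks of $E_6$ and $E_8$ leave only a short list of $\xi$'s to inspect, and in each non-Borel--de Siebenthal case the offending large component is immediate from the Dynkin diagram of $E$.
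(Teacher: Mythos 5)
Your proof is correct and takes essentially the same route as the paper's: both reduce to Lemma \ref{lemm:orb_root_latt}, using part (ii) to force every irreducible component of ${\rm R}(B)$ to have Coxeter number $\leq p$, part (i) plus an inspection of the (extended) Dynkin diagram to single out the Borel--de Siebenthal sublattice $\xi=\varpi_i$ with $n_i=p$, and part (iii) (equivalently Corollary \ref{corequicoxh}) applied to $2\mathbf{A}_4$ or $3\mathbf{A}_2$ to pin down $A$ up to the Weyl group, which acts by isometries of $E$.
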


\begin{pf} Set $R={\rm R}(E)$, so that we have $E={\rm Q}(R)$.
We have to show that there is a unique ${\rm W}(R)$-orbit of index $p$ subgroups $B \subset E$ such that $B$ possesses an index $p$ subgroup without roots, and that for such a $B$ there is a unique ${\rm O}(B) \cap {\rm O}(E)$-orbit of index $p$ subgroups of $B$ without roots. We claim (provocatively) that both properties follow at once from Lemma \ref{lemm:orb_root_latt} and an inspection of the extended Dynkin diagrams of ${\bf E}_8$ and ${\bf E}_6$ drawn below:

\begin{center}
$\begin{array}{ccc}
\dynkin[extended,upside down,labels*={-\widetilde{\alpha},\alpha_1,\!\!\!\!\alpha_2,\alpha_3,\alpha_4,\alpha_5,\alpha_6,\alpha_7,\alpha_8},labels={,2,3,4,6,5,4,3,2},edge length=1cm]{E}{8}
& &
\dynkin[extended,upside down,labels*={-\widetilde{\alpha},\alpha_1,\!\!\!\!\!\!\!\!\!\!\!\!\!\alpha_2,\alpha_3,\alpha_4,\alpha_5,\alpha_6},labels={,1,2,2,3,2,1},edge length=1cm]{E}{6}
\end{array}$
\end{center}
(Each simple root $\alpha_i$ is labelled with the integer $n_i$.)
Indeed, assume for instance $R \simeq {\bf E}_8$ and $p=5=n_5$.
Note that the irreducible root systems with Coxeter number $\leq 5$ are the ${\bf A}_l$ with $1 \leq l \leq 4$, so by assertion (ii) of the lemma, the irreducible components of ${\rm R}(B)$ must have this form.
On the other hand, assertion (i) asserts that for a suitable choice of a positive system of $R$ the lattice $B$ is the kernel of $x \mapsto \xi \cdot x \bmod 5$ with $\xi=\sum_i m_i \varpi_i$ and $\sum_i m_i n_i  \leq 5$.  Consider the set
$$J = \{ \,\, j\, \, |\, \,  m_j \neq 0\}.$$
We must have $|J| \leq 2$ (note $n_i\geq 2$ for all $i$) and $\alpha_j \in {\rm R}(B)$ for $j \notin J$.
An inspection of the Dynkin diagram of ${\bf E}_8$ shows that in the case $|J|=2$, we have $J \subset \{1,2,7,8\}$ and $\{2,7\} \not \subset J$,
and ${\rm R}(B)$ contains an irreducible root system of rank $5$: a contradiction.
So we have $|J|=1$ and $J \neq \{4\}$.
But this clearly implies $J=\{5\}$ and $\xi=\omega_5$ by another inspection of this diagram. So $B$ is the Borel-de Siebenthal lattice ${\rm BS}_5(R)={\rm Q}(R_5)$,
and is isomorphic to the root lattice ${\rm A}_4 \oplus {\rm A}_4$.
Note that we have ${\rm h}({\rm A}_4)=5=p$.
By the last assertion of Lemma \ref{lemm:orb_root_latt} applied to $R_5$, there
is a unique ${\rm W}(R_5)$-orbit of index $5$ sublattices of ${\rm Q}(R_5)$
without root.
As we have ${\rm W}(R_5) \subset {\rm W}(R)$, this concludes the proof in the case $R \simeq {\bf E}_8$.
The case $R \simeq {\bf E}_6$ is entirely similar.
\end{pf}

\begin{prop}\label{coreg} Let $(g,p,m)$ be either $(8,5,4)$ or $(6,3,5)$.
Up to isometry, ${\rm Q}_g$ is the unique even lattice of rank $g$ without roots satisfying ${\rm Q}_g^\sharp /{\rm Q}_g \simeq (\Z/p\Z)^m$. \par
Moreover, ${\rm O}({\rm Q}_g)$ permutes transitively the totally isotropic planes $($resp. lines, resp. flags$)$ of ${\rm res}\, {\rm Q}_g$. The inverse image in ${\rm Q}_g^\sharp$ of such an isotropic plane $($resp. line$)$ is isometric to ${\rm E}_g$ $($resp. to ${\rm A}_4 \oplus {\rm A}_4$ for $g=8$, to ${\rm A}_2 \oplus {\rm A}_2 \oplus {\rm A}_2$ for $g=6)$.
\end{prop}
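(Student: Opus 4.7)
The plan is to reduce both statements to Proposition \ref{prop:uniq_sub_E}. Let $L$ be an even, rootless, positive definite lattice of rank $g$ with $L^\sharp/L \simeq (\Z/p\Z)^m$. The strategy for the uniqueness part is to produce an even overlattice $L^+$ of $L$ with $[L^+:L]=p^2$ and $L^+ \simeq {\rm E}_g$, after which Proposition \ref{prop:uniq_sub_E} will force $L \simeq {\rm Q}_g$. The same proposition, applied to various intermediate lattices, will also yield the transitivity statements.

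First I would show $\res L \simeq \res {\rm Q}_g$ as linking quadratic spaces. This uses the fact that, for odd $p$, non-degenerate $\Q/\Z$-valued quadratic forms on $(\Z/p\Z)^m$ are classified up to isomorphism by their discriminant in $\F_p^\times/(\F_p^\times)^2$, combined with Milgram's formula, which constrains this discriminant via the signature of $L$: since $L$ and ${\rm Q}_g$ share rank, signature and residue order, their residues are isomorphic. Now $\res {\rm Q}_g \simeq {\rm H}(\Z/p\Z)^2 \oplus \res {\rm E}_g$ has Witt index $2$, so I can pick a totally isotropic plane $P \subset \res L$ and form the overlattice $L^+ = \pi^{-1}(P) \subset L^\sharp$ (where $\pi\colon L^\sharp \twoheadrightarrow \res L$): it is even, positive definite, of rank $g$, with $[L^+:L]=p^2$ and $\res L^+ \simeq P^\perp/P \simeq \res {\rm E}_g$. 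The next step is the identification $L^+ \simeq {\rm E}_g$. For $g=8$ this is immediate, $L^+$ being even unimodular positive definite of rank $8$. For $g=6$, $L^+$ has rank $6$ and discriminant $3$, and one invokes the classical uniqueness of such a lattice (for instance via a Smith--Minkowski--Siegel mass argument showing that the genus of ${\rm E}_6$ has class number one); this is the main technical hurdle of the proof. Once $L^+ \simeq {\rm E}_g$ is established, picking any intermediate even lattice $N$ with $L \subset N \subset L^+$ and $[N:L]=p$ produces a triple $(L,N,L^+)$ which, by Proposition \ref{prop:uniq_sub_E}, is isometric to the standard triple $({\rm Q}_g, R_g, {\rm E}_g)$ with $R_g = {\rm A}_4\oplus{\rm A}_4$ for $g=8$ and $R_g = {\rm A}_2\oplus{\rm A}_2\oplus{\rm A}_2$ for $g=6$; in particular $L \simeq {\rm Q}_g$.

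The transitivity statements now follow by the same reduction. For two isotropic planes $P_1,P_2 \subset \res {\rm Q}_g$, the preimages $M_i \subset {\rm Q}_g^\sharp$ are each isometric to ${\rm E}_g$ by the preceding identification, and Proposition \ref{prop:uniq_sub_E} (applied to triples $({\rm Q}_g, N_i, M_i)$ for any chosen intermediate $N_i$) produces an isometry $\psi\colon M_1 \isomo M_2$ sending ${\rm Q}_g$ to itself; its restriction lies in $\O({\rm Q}_g)$ and carries $P_1$ to $P_2$. For isotropic lines, I fix the plane $P_0 = {\rm E}_g/{\rm Q}_g \subset \res {\rm Q}_g$ coming from the standard embedding; varying the middle lattice of a triple sitting between ${\rm Q}_g$ and ${\rm E}_g$ shows, again by Proposition \ref{prop:uniq_sub_E}, that the stabilizer of ${\rm Q}_g$ in $\O({\rm E}_g)$ acts transitively on the $p+1$ lines of $P_0$, and combining with transitivity on planes yields transitivity on lines. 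Transitivity on flags follows by combining the two, and the asserted isometry type of the preimage of an isotropic line is read off from the same standard triple.
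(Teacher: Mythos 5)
Your proposal is correct and follows essentially the same route as the paper: identify $\res L \simeq \res {\rm Q}_g$ via Milgram, pass to the index-$p^2$ even overlattice attached to a totally isotropic plane, recognize it as ${\rm E}_g$ from its determinant (class number one in its genus), and then invoke Proposition \ref{prop:uniq_sub_E} together with Witt's theorem on $\res {\rm Q}_g$ to get both uniqueness and the transitivity on flags, lines and planes. The only cosmetic difference is how the well-known uniqueness of the rank-$6$, determinant-$3$ even lattice is justified (mass formula versus realizing it as the orthogonal of an ${\rm A}_2$ in ${\rm E}_8$).
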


In the statement above, by a {\it totally isotropic flag} of ${\rm res}\, {\rm Q}_g$ we mean a pair $(D,P)$ with $D$ a line and $P$ a totally isotropic plane containing $D$.

\begin{pf}  Let $A$ be an even lattice of rank $g$ with $A^\sharp/A \simeq (\Z/p\Z)^m$.
The isomorphism class of an $m$-dimensional linking quadratic space $V$ over $\Z/p\Z$ is determined by its Gauss sum $\gamma(V)=|V|^{-1/2} \sum_{v \in V} e^{2\pi i \, {\rm q}(v)}$.
The Milgram formula \cite[Appendix 4]{Milnor_symm_bil} asserts $\gamma({\rm res}\, A) = e^{\frac{2 \pi i g}{8}} = \gamma({\rm res}\, {\rm Q}_g)$ and proves ${\rm res}\, A\, \simeq {\rm res}\, {\rm Q}_g$. \ps
The even lattices $L$ containing $A$ with index $p^i$ are in natural bijection with the totally isotropic subspaces of dimension $i$ over $\Z/p\Z$ inside ${\rm res}\, A$, via the map $L \mapsto L/A$.
We have already proved ${\rm res}\, A \,\simeq \,{\rm H}(\Z/p\Z)^2 \,\oplus \,{\rm res}\, {\rm E}_g$.
By Witt's theorem, any isotropic line (or plane) is thus part of a totally isotropic flag of ${\rm res}\, A$.
By Proposition \ref{prop:uniq_sub_E}, it only remains to show that any even lattice $L$ containing $A$ with $\dim_{\Z/p\Z} \,L/A \,=\, 2$  is isometric to ${\rm E}_g$.
But such an $L$ has determinant $1$ in the case $g=8$, and determinant $3$ otherwise.
As is well known, this shows $L \simeq {\rm E}_8$ in the first case, and $L \simeq {\rm E}_6$ in the second
(use e.g. that such a lattice must be the orthogonal of an ${\rm A}_2$ embedded in ${\rm E}_8$).
\end{pf}

This proposition implies in particular that the fixed point lattice $Q$ considered in Lemma \ref{lemmedefq}, in the case of an element $c$ with shape $1^4 5^4$, is isometric to ${\rm Q}_8$.

\begin{prop} \label{prop:aut_Qg}
For $g=6,8$, the natural morphism $\mathrm{O}({\rm Q}_g) \to \mathrm{O}(\res {\rm Q}_g)$ is an isomorphism.
\end{prop}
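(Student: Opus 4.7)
I would treat $g=6$ and $g=8$ in parallel, writing $p$ for $3$ or $5$ respectively, and let $\varphi : \mathrm{O}(\mathrm{Q}_g) \to \mathrm{O}(\res \mathrm{Q}_g)$ denote the natural morphism. The injectivity and surjectivity are handled separately, using the structure of $\mathrm{Q}_g$ from Proposition \ref{coreg} together with a Minkowski-style argument.

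For injectivity, the key input is that $p \mathrm{Q}_g^{\sharp} \subset \mathrm{Q}_g$ (since $\res \mathrm{Q}_g$ is $p$-torsion) and that the bilinear form induces a perfect $\F_p$-valued pairing $\mathrm{Q}_g / p \mathrm{Q}_g^{\sharp} \times \mathrm{Q}_g^{\sharp} / \mathrm{Q}_g \to \F_p$. An element $\gamma \in \ker \varphi$ is trivial on the right factor, so by perfectness it is also trivial on the left one. Therefore $\gamma - \mathrm{id}$ is square-zero as an endomorphism of $\mathrm{Q}_g / p \mathrm{Q}_g$, so $\gamma^p$ reduces to the identity modulo $p$; Minkowski's theorem on torsion in $\ker(\GL_g(\Z) \to \GL_g(\F_p))$ then gives $\gamma^p = \mathrm{id}$, so that $\ker \varphi$ has exponent $p$. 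To conclude $\gamma = \mathrm{id}$, I would exploit that $\gamma$ preserves each $\mathrm{E}_g$-overlattice $L_P$ coming from a totally isotropic plane $P$ (Proposition \ref{coreg}) and acts trivially on the quotient $L_P / \mathrm{Q}_g$; replaying the same pairing argument inside $L_P$ modulo $\mathrm{Q}_g$ and invoking Minkowski once more (the action of $W(\mathrm{E}_g)$ on $L_P / p L_P$ being faithful) would force $\gamma|_{L_P} = \mathrm{id}$.

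For surjectivity, both $\mathrm{O}(\mathrm{Q}_g)$ (by Proposition \ref{coreg}) and $\mathrm{O}(\res \mathrm{Q}_g)$ (by Witt's theorem for finite quadratic spaces) act transitively on the set $\mathcal{F}$ of totally isotropic flags $(D, P)$ of $\res \mathrm{Q}_g$. Fixing such a flag corresponding to a chain of lattices $\mathrm{Q}_g \subset L_D \subset L_P \cong \mathrm{E}_g$, it suffices to lift every element of the stabilizer of $(D, P)$ in $\mathrm{O}(\res \mathrm{Q}_g)$ to an element of $\mathrm{O}(\mathrm{Q}_g)$ preserving the chain. Any such lift lies in the normalizer in $\mathrm{O}(\mathrm{E}_g)$ of the Borel--de Siebenthal sublattice $L_D$ and must moreover preserve $\mathrm{Q}_g$, which is cut out inside $L_D$ by a Weyl vector in the sense of Lemma \ref{lemm:orb_root_latt}. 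A direct enumeration, using the explicit structure of $\mathrm{O}(\mathrm{E}_g)$ and of $W(L_D)$ (with $L_D \cong \mathrm{A}_4 \oplus \mathrm{A}_4$ or $\mathrm{A}_2^3$) together with the characterization of $\rho$ up to $W(L_D)$-action from Lemma \ref{lemm:orb_root_latt}(iii), would then identify this subgroup with the full flag stabilizer in $\mathrm{O}(\res \mathrm{Q}_g)$.

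The main obstacle I anticipate is this last identification of stabilizers for surjectivity, which demands an explicit and somewhat delicate description of the subgroup of $\mathrm{O}(\mathrm{E}_g)$ preserving the entire chain $\mathrm{Q}_g \subset L_D \subset L_P$ together with a comparison of its image in $\mathrm{O}(\res \mathrm{Q}_g)$ to the parabolic-type stabilizer of $(D, P)$ on the residue side.
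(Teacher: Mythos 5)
Your surjectivity outline is essentially the paper's route (reduce by transitivity on isotropic lines and flags to a stabilizer computation inside $\mathrm{O}(\mathrm{E}_g)$, normalizing the Borel--de Siebenthal sublattice $\mathrm{A}_{p-1}^k$ cut out by a Weyl vector), but the ``direct enumeration'' you defer \emph{is} the content of the paper's proof, so that half is a plan rather than an argument. The genuine gap is in your injectivity step. The pairing you invoke does not exist: reducing the perfect $\Z$-pairing $\mathrm{Q}_g\times\mathrm{Q}_g^\sharp\to\Z$ modulo $p$, the subquotient $\mathrm{Q}_g/p\mathrm{Q}_g^\sharp$ is in perfect duality with \emph{itself}, and $\mathrm{Q}_g^\sharp/\mathrm{Q}_g$ is in perfect duality with \emph{itself} (the linking form); there is no natural pairing between the two. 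For $g=6$ this is visible on dimensions alone: $\dim_{\F_3}\mathrm{Q}_6/3\mathrm{Q}_6^\sharp=1$ while $\dim_{\F_3}\res\mathrm{Q}_6=5$. So you have no justification for the claim that $\gamma\in\ker\varphi$ acts trivially on $\mathrm{Q}_g/p\mathrm{Q}_g^\sharp$, hence none for $(\gamma-\mathrm{id})^2=0$ on $\mathrm{Q}_g\otimes\F_p$, and the Minkowski step never starts.

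A sanity check showing the mechanism cannot be right: it uses nothing specific to $g=6,8$, yet by Proposition \ref{prop:doubling} the analogous kernel is $\Z/3\Z$ for $\mathrm{Q}_{12}$ and $\Z/5\Z\rtimes\Z/2\Z$ for $\mathrm{Q}_{16}$. In the latter case the swap of the two $\mathrm{E}_8$ factors is an involution acting trivially on $\res\mathrm{Q}_{16}$ but with eigenvalue $-1$ of multiplicity $8$ on $\mathrm{Q}_{16}\otimes\F_5$, so ``trivial on the residue'' does not imply ``unipotent mod $p$''. Your fallback (replaying the pairing argument inside $L_P$) inherits the same defect, and faithfulness of $\mathrm{W}(\mathrm{E}_g)$ on $L_P/pL_P$ is automatic from Minkowski for any odd $p$ — it does not supply the missing triviality of $\gamma$ on $L_P/pL_P$. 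In the paper, injectivity is not argued separately: it falls out of the explicit identification of the line-stabilizer $\mathrm{O}(\mathrm{Q}_g)\cap\mathrm{O}(\mathrm{A}_{p-1}^k)\simeq\F_p^k\rtimes\bigl(\F_p^\times\times(\{\pm1\}^k\rtimes\mathfrak{S}_k)\bigr)$ with the parabolic stabilizer of the line in $\mathrm{O}(\res\mathrm{Q}_g)$, and the special feature of $(p,k)=(3,3),(5,2)$ enters through $\mathrm{O}(\mathrm{I}_k\otimes\F_p)$ having only the obvious norm-one vectors. You would need to carry out that computation (or find a substitute that genuinely distinguishes $g=6,8$ from $g=12,16$) for both halves of the statement.
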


\begin{pf}  Set $A={\rm Q}_g$.
Fix an isotropic line $D$ in the quadratic space $\res A$ over $\mathbb{F}_p$ (with $p=3$ for $g=6$, $p=5$ otherwise).
We have a canonical filtration $0 \subset D \subset D^\perp \subset {\rm res}\, A$, and a nondegenerate quadratic space $V= D^\perp/D$ over $\mathbb{F}_p$.
The stabilizer $P$ of $D$ in ${\rm O}({\rm res}\, A)$ is in a natural
(splittable) exact sequence
\begin{equation} \label{parabolicP}
  1 \longrightarrow \, U \longrightarrow P \longrightarrow  {\rm
  GL}(D) \times {\rm O}(V) \longrightarrow 1
\end{equation}
($P$ is a ``parabolic subgroup'' with ``unipotent radical'' $U$).
We have an isomorphism $\beta: U \isomo \Hom((\res A) / D^\perp, V)$
characterized by
\[ g(x) \equiv x + \beta(g)(x) \mod D \]
for all $g \in U$ and $x \in \res A$.
(By duality $U$ is also naturally isomorphic to $\Hom(V,D)$, but we will not
need this point of view.)
Denote by $B$ the even lattice defined as the inverse image of $D$ in $A^\sharp$.
We have natural isomorphisms $V \simeq  {\rm res}\, B$ and $B/A \simeq D$ (see
Lemma \ref{lemm:sub_sup_latt_isot} (i)).
The stabilizer $S$ of $D$ in ${\rm O}(A)$ is ${\rm O}(A) \cap {\rm O}(B)$.
By Proposition \ref{coreg}, we are left to check that the natural map $S \rightarrow P$ is an isomorphism.
We first study ${\rm O}(A) \cap {\rm O}(B)$. Set $k=g/(p-1)$. By the same proposition, we may also assume that we have
$$B={\rm A}_{p-1}^k \hspace{.5cm}\text{and} \hspace{.5cm} A\,=\,\{\,(a_i)_{1 \leq i \leq k} \,\in \, B\,\, \,\,|\,\,\,\,\sum_{i=1}^k \rho' \cdot a_i \equiv 0 \bmod p\},$$
where $\rho'$ is some Weyl vector in ${\rm A}_{p-1}$ (e.g. the vector $((p-1)/2,...,-(p-1)/2)$). Let $R =\, k\, {\bf A}_{p-1}$ be the root system of $B$.
For general reasons, the subgroup ${\rm G}(R)$ of ${\rm A}(R)$ fixing the Weyl vector $\rho=(\rho',\dots,\rho')$ of $R$ is naturally isomorphic to $\{\pm 1\}^k \rtimes \mathfrak{S}_k$ (automorphisms of the Dynkin diagram of $R$), and we have ${\rm O}(B) = {\rm A}(R) = {\rm W}(R) \rtimes {\rm G}(R)$. This proves
 $${\rm O}(B)  \simeq \mathfrak{S}_p^k \rtimes ( \{\pm 1\}^k \rtimes \mathfrak{S}_k).$$
We trivially have ${\rm G}(R) \subset {\rm O}(A)$, hence we only have to
determine ${\rm W}(R) \cap {\rm O}(A)$.
By definition of $A$, this is the subgroup of ${\rm W}(R)$ preserving $\Z \rho + p {\rm P}(R)$.
As $\rho$ is in ${\rm Q}(R)$ and $p {\rm P}(R) \subset {\rm Q}(R)$, ${\rm W}(R)
\cap {\rm O}(A)$ is also the subgroup of ${\rm W}(R)$ preserving the subspace of
the quadratic space ${\rm Q}(R) \otimes \mathbb{F}_p$ generated by $\rho$ and
its kernel $p {\rm P}(R)/ p {\rm Q}(R)$.
But the kernel of ${\rm A}_{p-1} \otimes \mathbb{F}_p$ is generated by the image $e$ of the vector $(1-p,1,\dots,1)$,
and is fixed by $\mathfrak{S}_p$.
So ${\rm W}(R) \cap {\rm O}(A)$ is the subgroup of $(\sigma_1,\dots,\sigma_k)$ in $\mathfrak{S}_p^k$
such that there is $\lambda$ in $\mathbb{F}_p^\times$ such that for all $j=1,\dots,k$ there is $b_j$ in $\mathbb{F}_p$ with
\begin{equation} \label{actionrhosurp} \sigma_j (\rho') \,\equiv \,\lambda \,\rho' \,+ \,b_j\,e \bmod\, p{\rm A}_{p-1}.\end{equation}
To go further it will be convenient to identify ${\rm A}_{p-1}$ with the subgroup of $(x_i)_{i \in \mathbb{F}_p}$ in $\Z^{\mathbb{F}_p}$ satisfying $\sum_i x_i = 0$
in such a way that we have $\rho'_i = i$ for all $i$ in $\mathbb{F}_p$.
If we do so, ${\rm W}(R) \cap {\rm O}(A)$ becomes the subgroup of $(\sigma_1,\dots,\sigma_k)$ in $\mathfrak{S}_{\mathbb{F}_p}^k$
such that there is $\lambda$ in $\mathbb{F}_p^\times$ and $b_1,\dots,b_k$ in $\mathbb{F}_p$
with $\sigma_j^{-1}(i)=\lambda i + b_j$ for all $i$ in $\mathbb{F}_p$ and all $j=1,\dots,k$ (``$k$ affine transformations with common slope'').
We have shown ${\rm W}(R) \cap {\rm O}(A) \, = \, {\mathbb{F}_p}^k \rtimes \mathbb{F}_p^\times$ and
\begin{equation}\label{stabs} {\rm O}(A) \cap {\rm O}(B) \,= \,{\mathbb{F}_p}^k \rtimes (\mathbb{F}_p^\times \times (\{ \pm 1\}^k \rtimes \mathfrak{S}_k)).\end{equation}
It remains to identify the action of this group on $\res A$.
The reduction modulo $A$ of the natural inclusions $A \subset B \subset B^\sharp
\subset A^\sharp$, is $0 \subset D \subset D^\bot \subset {\rm res}\, A$ by
definition, and we have set $V={\rm res}\, B$.
Note that ${\rm res}\, A$ is generated by $D^\perp$ and the image of the vector
$p^{-1} \rho$, and that $W(R)$ acts trivially on $V$.
Dividing Formula \eqref{actionrhosurp} by $p$ gives the action of ${\rm W}(R)
\cap {\rm O}(A)$ on $(\res A) / D$:
\begin{itemize}
  \item $\lambda$ is an element of ${\rm GL}((\res A) / D^\perp)$, which is
    naturally isomorphic to ${\rm GL}(D)$ by duality, and
  \item for $\lambda=1$, i.e.\ when considering an element of ${\rm W}(R) \cap
    {\rm O}(A)$ mapping to $U$, the family $(b_j)_{1 \leq j \leq k}$ is the
    matrix of an element of $\Hom((\res A) / D^\perp, V)$ in the bases $p^{-1}
    \rho$ of $(\res A) / D^\perp$ and $((p^{-1} e, 0, \dots, 0), \dots, (0,
    \dots, 0, p^{-1} e))$ of $V$.
\end{itemize}
The natural map $\O(A) \to \O(\res A)$ thus identifies ${\rm W}(R) \cap {\rm
O}(A)$ with the inverse image of ${\rm GL}(D) \times 1$ in $P$.
In order to conclude that ${\rm O}(A) \cap {\rm O}(B) \rightarrow P$ is an
isomorphism, we are left to check that the natural map $$\{\pm 1\}^k \rtimes
\mathfrak{S}_k \to \O(\res \,{\rm A}_{p-1}^k) = {\rm O}( {\rm I}_k \otimes
\F_p)$$ is an isomorphism.
Injectivity is clear (for any $p>2$ and $k>0$).
Surjectivity is particular to the two cases at hand: for $(p,k)=(3,3)$ or
$(5,2)$, the only elements of norm $1$ in $\mathrm{I}_k \otimes \Fp$ are the
standard basis elements and their opposites.
\end{pf}

\begin{prop}\label{q8orientable} The lattice ${\rm Q}_8$ is orientable, whereas ${\rm Q}_6$ is not.
\end{prop}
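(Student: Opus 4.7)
The plan is to exploit the isomorphism $\mathrm{O}(\mathrm{Q}_g) \isomo \mathrm{O}(\res\, \mathrm{Q}_g)$ of Proposition \ref{prop:aut_Qg}, together with the explicit description
\[
S \,:=\, \mathrm{O}(\mathrm{Q}_g) \cap \mathrm{O}(B) \,\simeq\, \F_p^k \rtimes \bigl( \F_p^\times \times (\{\pm 1\}^k \rtimes \mathfrak{S}_k) \bigr)
\]
given in its proof, where $B = \mathrm{A}_{p-1}^k$, $(g,p,k) \in \{(6,3,3),(8,5,2)\}$, and each $\pm 1$ corresponds to the diagram flip $-w_0$ of a copy of $\mathbf{A}_{p-1}$. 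The idea is to compute the real determinant character $\det : \mathrm{O}(\mathrm{Q}_g) \to \{\pm 1\}$ factor by factor on $S$, realizing these factors concretely as permutations and sign changes on $B$.

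For $\mathrm{Q}_6$ I expect the element $\lambda = -1 \in \F_3^\times \subset S$ to have $\det(\lambda) = -1$. In the identification $\mathrm{A}_2 \simeq \{(x_i)_{i \in \F_3} \in \Z^{\F_3} : \sum_i x_i = 0\}$, this $\lambda$ acts on each of the three $\mathrm{A}_2$-summands of $B = \mathrm{A}_2^3$ as the transposition $(1,2)$ of $\F_3$, which has determinant $-1$ on $\mathrm{A}_2$; hence its determinant on $B$ is $(-1)^3 = -1$, proving $\mathrm{Q}_6$ non-orientable.

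For $\mathrm{Q}_8$ a direct factor-by-factor check yields $\det|_S = 1$: the translations in $\F_5^2$ act as $5$-cycles (even), an element $\lambda \in \F_5^\times$ acts by the same permutation of $\F_5$ on the two copies of $\mathrm{A}_4$ (so its determinant is a square), each $-w_0$ has determinant $(-1)^4 = 1$ on $\mathrm{A}_4$, and the swap in $\mathfrak{S}_2$ is a product of four transpositions. To extend this triviality from $S$ to all of $\mathrm{O}(\mathrm{Q}_8)$, I plan to use that $\res\, \mathrm{Q}_8 \simeq \mathrm{I}_4 \otimes \F_5$ is of plus type (since $-1 \equiv 2^2$ in $\F_5$), so $\mathrm{O}(\res\, \mathrm{Q}_8) \simeq \mathrm{O}_4^+(\F_5)$, whose derived subgroup $\Omega_4^+(\F_5) \simeq (\SL_2(\F_5) \times \SL_2(\F_5))/\{\pm I\}_{\mathrm{diag}}$ is perfect. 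Therefore any $\pm 1$-character of $\mathrm{O}(\mathrm{Q}_8)$ factors through $\mathrm{O}_4^+(\F_5)/\Omega_4^+(\F_5) \simeq (\Z/2)^2$, generated by the classes of $\det_{\F_5}$ and the spinor norm.

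It thus suffices to show that $S$ surjects onto this $(\Z/2)^2$, which will force $\det \equiv 1$. For this I will compute $(\det_{\F_5},\mathrm{sp})$ on two elements of $\{\pm 1\}^2 \rtimes \mathfrak{S}_2 \subset S$, using the basis $\{(\alpha_1,0),(\varpi_1,0),(0,\varpi_1),\tfrac{1}{5}(\rho',\rho')\}$ of $\res\,\mathrm{Q}_8$ adapted to the filtration $0 \subset D \subset D^\perp \subset \res\,\mathrm{Q}_8$ with $D = \mathrm{A}_4^2/\mathrm{Q}_8$. I expect the diagram flip $(-w_0, \mathrm{id})$ to be an involution of determinant $-1$ whose $(-1)$-eigenvector lifts to $3\alpha_1 + \varpi_1 \in \mathrm{Q}_8^\sharp$ of norm $\tfrac{2}{5} \bmod \Z$ (non-square in $\F_5^\times/(\F_5^\times)^2$), giving class $(-1,-1)$; and the swap in $\mathfrak{S}_2$ to have $(-1)$-eigenvector $(\varpi_1, -\varpi_1)$ of square norm $\tfrac{4}{5}$, giving class $(-1,+1)$. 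Together these span $(\Z/2)^2$, completing the argument. The main obstacle is this last eigenvector computation for $(-w_0, \mathrm{id})$, whose action mixes the $D$- and $V$-components of the filtration through the relation $(\varpi_4,0) \equiv -(\alpha_1,0) + 4(\varpi_1,0) \pmod{\mathrm{Q}_8}$.
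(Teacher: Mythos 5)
Your proposal is correct and is essentially the paper's own argument: both reduce to $\O(\res\,{\rm Q}_g)$ via Proposition \ref{prop:aut_Qg}, identify the relevant $(\Z/2)^2$ character group by determinant and spinor norm (the paper invokes Eichler's theorem where you use perfectness of $\Omega_4^+(\F_5)$), and evaluate $\det$ together with the residual spinor norm on explicit elements of ${\rm O}({\rm Q}_g)\cap{\rm O}({\rm A}_{p-1}^k)$ --- your spinor-norm values $2$ and $4$ for $p=5$, and your eigenvector $3\alpha_1+\varpi_1$ of norm $\tfrac{2}{5}$, agree with the paper's $\tfrac12\det(\res {\rm A}_4)$ and $2\cdot\tfrac12\det(\res{\rm A}_4)$. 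The only cosmetic differences are that for ${\rm Q}_6$ you exhibit the single determinant-$(-1)$ element $\lambda=-1\in\F_3^\times$ directly, bypassing the abelianization entirely, whereas the paper uses the sign flip $(-1,1,1)\rtimes{\rm id}$ and runs the same two-reflection argument uniformly for both values of $g$.
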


\begin{pf}
  Set again $A={\rm Q}_g$ and $p=3$ (case $g=6$) or $p=5$ (case $g=8$).
  We will view the linking quadratic space ${\rm res}\, A$ over $\Z/p\Z$ as
  traditional quadratic space over $\Z/p\Z$ by multiplying its quadratic form by
  $p$ (making it $\Z/p\Z$-valued instead of $\frac{1}{p}\Z/\Z$-valued).
  This quadratic space is nondegenerate and isotropic (it has dimension $>2$) so
  by a classical theorem of Eichler \cite[Ch.\ II \S
  8.I]{Dieudonne_geom_gpes_class} the determinant and spinor norm maps induce an
  isomorphism
  \begin{equation} \label{eichler}
    \O(\res A)^\mathrm{ab} \simeq \{\pm 1\} \times (\Fp^\times \otimes \Z/2\Z).
  \end{equation}
  We will give two elements $\gamma,\gamma'$ of ${\rm O}(A)$ inducing orthogonal
  reflections of ${\rm res}\,A$ and with distinct spinor norms.
  The previous proposition and \eqref{eichler} will then imply that $\gamma$ and
  $\gamma'$ generate ${\rm O}(A)^{\rm ab}$.\par

  Set $k=g/(p-1)$.
  By definition, $A$ is the index $p$ subgroup of the root lattice $B={\rm
  A}_{p-1}^k$ defined by $x \cdot \rho \equiv 0 \bmod p$, where
  $\rho=(\rho',\dots,\rho')$ is a fixed Weyl vector in $B$.
  As already seen in the proof of Proposition \ref{prop:aut_Qg} the subgroup $G$
  of ${\rm O}(B)$ fixing $\rho$ is a subgroup of ${\rm O}(A)$ naturally
  isomorphic to $\{\pm 1\}^k \rtimes \frak{S}_k$.
  The subgroup $1 \rtimes \mathfrak{S}_k \subset G$ is the obvious one, but the
  element $(-1,1,\dots,1) \rtimes 1$ acts on $B$ as $(x_1,\dots,x_k) \mapsto (-
  \sigma x_1,x_2,\dots,x_k)$, where $\sigma$ in $\mathfrak{S}_p$ is the unique
  element sending $\rho'$ to $-\rho'$.
  We take $\gamma,\gamma'$ in $G$ with $\gamma=(-1,1,\dots,1) \rtimes
  \mathrm{id}$ and $\gamma' = (1,\dots,1) \rtimes \tau$, where $\tau$ is a
  transposition in $\mathfrak{S}_k$.
  Then $\gamma$ and $\gamma'$ act trivially on $A^\sharp/B^\sharp \,= \,\langle
  \,p^{-1} \rho \,\rangle$ and induce orthogonal reflections of ${\rm res}\, B$
  and ${\rm res}\, A$, with spinor norm $\frac{1}{2}\det(\res {\rm A}_{p-1})$
  for $\gamma$ and $2 \cdot \frac{1}{2}\det(\res {\rm A}_{p-1})$ for $\gamma'$.
  We actually have $\frac{1}{2} \det(\res {\rm A}_{p-1}) \equiv \frac{p-1}{2}$
  in $(\Z/p\Z)^\times$, but what only matters for this proof is that these
  spinor norms are distinct, as $2$ is not a square in $(\Z/p\Z)^\times$ for
  $p=3,5$. \par
  We have $\det \gamma_{|A} = (-1)^{(p-1)/2}$ and $\det \gamma'_{|A} =
  (-1)^{p-1}= 1$: this shows that $\det$ is trivial on ${\rm O}({\rm A})$ for
  $p=5$ but not for $p=3$.
\end{pf}

For $g=6,8$, we have seen that there is a unique ${\rm O}({\rm Q}_g)$-orbit of overlattices $E \supset {\rm Q}_g$ isomorphic to ${\rm E}_g$.  We now define ${\rm Q}_{2g}$ by a doubling process.

\begin{defi} \label{defqg}
  Set $(g,p)=(6,3)$ or $(8,5)$ and fix an embedding ${\rm Q}_g \subset {\rm
  E}_g$ arbitrarily.
  Define ${\rm Q}_{2g}$ as the sublattice of ${\rm E}_g \oplus {\rm E}_g$
  consisting of elements $(x,y)$ satisfying $x + y \in {\rm Q}_g$.
  Then ${\rm Q}_{2g}$ is an even lattice, without roots, satisfying ${\rm res}\,
  {\rm Q}_{2g}\, \simeq \,{\rm H}(\Z/p\Z)^2\, \oplus \,{\rm res}\, {\rm
  E}_g^2$.\ps
\end{defi}

Let us check the last assertion in the definition above.
Note that a root in ${\rm E}_g \oplus {\rm E}_g$ must belong either to ${\rm E}_g \oplus 0$ or to $0 \oplus {\rm E}_g$,
so the fact that ${\rm Q}_g$ has no root implies that ${\rm Q}_{2g}$ has no root either.
The assertion on the residue of ${\rm Q}_{2g}$ follows from $({\rm E}_g \oplus {\rm E}_g)/{\rm Q}_{2g} \simeq (\Z/p\Z)^2$, the fact that ${\rm res}\, {\rm Q}_{2g}$ is a subquotient of the $\Z/p\Z$-vectorspace ${\rm res}\, {\rm Q}_g \oplus {\rm res}\, {\rm Q}_g$,  and Lemma \ref{lemm:sub_sup_latt_isot}.
The following statements are analogues of Propositions \ref{prop:uniq_sub_E} and
\ref{coreg} (although their proofs are slightly different). \ps

\begin{prop} \label{EplusE}
  Set $(g,p)=(6,3)$ or $(8,5)$ and $E={\rm E}_g$.
  Up to the action of $\O( E) \times \O(E)$ there is a unique sublattice $A$ of
  index $p^2$ in $E \oplus E$ without roots.
  For such an $A$, the natural map ${\rm O}(A) \cap ({\rm O}(E) \times {\rm
  O}(E)) \rightarrow {\rm GL}((E \oplus E)/A)$ is surjective.
\end{prop}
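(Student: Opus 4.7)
The plan is to classify rootless index-$p^2$ sublattices $A \subset E \oplus E$ via their \emph{projections and kernels}. Set $L_1 = \{x \in E : (x,0) \in A\}$, $L_2 = \{x \in E : (0,x) \in A\}$, and $L_i^+ = \pi_i(A) \subset E$ where $\pi_i$ denotes the $i$-th projection. Standard arguments give canonical isomorphisms $A/(L_1 \oplus L_2) \isomo L_1^+/L_1 \isomo L_2^+/L_2$, so $A$ is determined by these four lattices together with an isomorphism $\phi : L_1^+/L_1 \isomo L_2^+/L_2$, and one computes $[E \oplus E : A] = [E:L_1] \cdot [E:L_2^+] = [E:L_2] \cdot [E:L_1^+]$. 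The lattice $A$ is rootless if and only if both $L_1$ and $L_2$ are, since every root of $E \oplus E$ lies in $E \oplus 0$ or $0 \oplus E$. By Lemma \ref{lemm:orb_root_latt}(ii), since $p < {\rm h}({\rm E}_g) \in \{12, 30\}$, no index-$p$ sublattice of $E$ is rootless. A short enumeration of the possibilities for the indices---all powers of $p$ with product $p^2$---then forces $L_1^+ = L_2^+ = E$ and $[E:L_1] = [E:L_2] = p^2$.

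At this point Proposition \ref{prop:uniq_sub_E} tells us that $L_1$ and $L_2$ are each isometric to ${\rm Q}_g$, and that $\O(E)$ acts transitively on the set of sublattices of $E$ isometric to ${\rm Q}_g$. Acting by $\O(E) \times \O(E)$ we may therefore assume $L_1 = L_2 = {\rm Q}_g$, and $A = A_\phi$ is now parametrized by an element $\phi \in \GL(E/{\rm Q}_g) \simeq \GL_2(\F_p)$. A direct verification shows that the residual $(\O(E) \cap \O({\rm Q}_g))^2$-action on such $\phi$'s is $(\gamma_1, \gamma_2) \cdot \phi = \bar\gamma_2 \, \phi \, \bar\gamma_1^{-1}$, where $\bar\gamma_i$ denotes the image of $\gamma_i$ in $\GL(E/{\rm Q}_g)$.

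The crucial input, and what I expect to be the main obstacle, is the surjectivity of the natural map $\O(E) \cap \O({\rm Q}_g) \to \GL(E/{\rm Q}_g)$. I would deduce this from Propositions \ref{prop:aut_Qg} and \ref{coreg}: the former identifies $\O({\rm Q}_g)$ with $\O(\res {\rm Q}_g)$, and the latter realises $E/{\rm Q}_g$ as a totally isotropic two-dimensional subspace of the nondegenerate linking quadratic space $\res {\rm Q}_g$ over $\F_p$. Thus $\O(E) \cap \O({\rm Q}_g)$ is the stabilizer of a totally isotropic plane in $\O(\res {\rm Q}_g)$, and the analogue of the parabolic exact sequence \eqref{parabolicP} for planes---which amounts to applying Witt's extension theorem to automorphisms of the totally isotropic plane---yields the claimed surjectivity.

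Granting this, uniqueness of $A$ up to $\O(E) \times \O(E)$-action is immediate: picking $\gamma_1$ with $\bar\gamma_1 = \mathrm{id}$ and $\gamma_2$ with $\bar\gamma_2 = \phi$ brings any $A_\phi$ to $A_{\mathrm{id}}$. For the remaining surjectivity claim, a direct computation identifies the stabilizer of $A_\phi$ inside $(\O(E) \cap \O({\rm Q}_g))^2$ as the pairs with $\bar\gamma_2 = \phi \,\bar\gamma_1\, \phi^{-1}$, and shows that under the natural isomorphism $(E \oplus E)/A_\phi \isomo E/{\rm Q}_g$, $(x,y) \mapsto \phi(\bar x) - \bar y$, its induced action on the quotient is via $\bar\gamma_2$. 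Thus any prescribed element $\alpha \in \GL((E \oplus E)/A_\phi)$ is attained by lifting $\alpha$ to $\gamma_2$ and $\phi^{-1} \alpha \phi$ to $\gamma_1$.
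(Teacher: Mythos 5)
Your argument is correct and follows essentially the same route as the paper's: reduce to $A \supset {\rm Q}_g \oplus {\rm Q}_g$ via Proposition \ref{prop:uniq_sub_E}, realize $A$ as the graph of some $\phi$ in ${\rm GL}(E/{\rm Q}_g)$, and use the surjectivity of ${\rm O}(E) \cap {\rm O}({\rm Q}_g) \rightarrow {\rm GL}(E/{\rm Q}_g)$ (which the paper likewise extracts from Proposition \ref{prop:aut_Qg}, the plane $E/{\rm Q}_g$ being totally isotropic in ${\rm res}\, {\rm Q}_g$) both to normalize $\phi$ and to get the final surjectivity. The only blemish is a harmless slip in the uniqueness step: to send $A_\phi$ to $A_{\rm id}$ you need $\bar\gamma_2 = \phi^{-1}$, not $\bar\gamma_2 = \phi$.
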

\begin{pf}
Fix $A$ as in the statement.
The sublattice $A \cap (E \oplus 0)$ of $E \oplus 0$ has
  index dividing $p^2$ and has no root, so by Proposition \ref{prop:uniq_sub_E}
  it has index $p^2$ and there is $\gamma$ in ${\rm O}(E)$ with $(\gamma \times 1)(A \cap (E \oplus 0)) = {\rm Q}_g \oplus 0$.
  Arguing similarly with $A \cap (0 \oplus E)$, we obtain the existence of $h$ in ${\rm O}(E) \times {\rm O}(E)$ such that $h(A)$ contains ${\rm Q}_g \oplus {\rm Q}_g$.
 Set $A'=h(A)$.\par
 Denote by $P$ the totally isotropic plane $E/{\rm Q}_g$ of ${\rm res}\, {\rm Q}_g$,
 and by $I$ the plane $A'/({\rm Q}_{g} \oplus {\rm Q}_g)$ inside $P \oplus P$.
 We have seen that the two natural projections $I \rightarrow P$ are injective, hence bijective.
 There is thus an element $\varphi$ in ${\rm GL}(P)$ with $I=\{(x,\varphi(x)), x \in P\}$.
 Set $S={\rm O}(E) \cap {\rm O}({\rm Q}_g)$.
 By Proposition \ref{prop:aut_Qg}, the natural morphism $S \rightarrow {\rm GL}(P)$ is surjective.
 By multiplying $h$ by a suitable element in $1 \times {\rm O}(E)$ we may thus assume that we have $\varphi = -{\rm id}_P$, that is, $A' = {\rm Q}_{2g}$.
 We have proved the first assertion.
 For the second, observe that $S$ embeds diagonally in ${\rm O}(E) \times {\rm O}(E)$, and as such, it preserves ${\rm Q}_{2g}$ and acts on the totally isotropic plane
 $P'=(E \oplus E)/{\rm Q}_{2g}$ of ${\rm res}\, {\rm Q}_{2g}$.
 Moreover, the natural map $$E/{\rm Q}_g \rightarrow (E \oplus E)/{\rm Q}_{2g},\,\,\, x \mapsto (x,0) \bmod {\rm Q}_{2g},$$
 defines an $S$-equivariant isomorphism $P \rightarrow P'$.
 The surjectivity of $S \rightarrow {\rm GL}(P)$ thus implies that of $S
 \rightarrow {\rm GL}(P')$.
  \end{pf}

\begin{prop} \label{coregdoubl} Let $(g,p,m)$ be either $(8,5,4)$ or $(6,3,6)$.
Up to isometry, ${\rm Q}_{2g}$ is the unique even lattice of rank $2g$ without roots satisfying ${\rm Q}_{2g}^\sharp /{\rm Q}_{2g} \simeq (\Z/p\Z)^m$. \par
Moreover, ${\rm O}({\rm Q}_{2g})$ permutes transitively the totally isotropic planes $($resp. lines, resp. flags$)$ of ${\rm res}\, {\rm Q}_{2g}$.
The inverse image in ${\rm Q}_{2g}^\sharp$ of such an isotropic plane (resp. line) is isometric to ${\rm E}_g \oplus {\rm E}_g$ (resp. to an even lattice with root system $m {\bf A}_{p-1}$).
\end{prop}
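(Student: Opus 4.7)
The plan is to adapt the proof of Proposition \ref{coreg} to the doubled setting, using Proposition \ref{EplusE} in place of Proposition \ref{prop:uniq_sub_E} at the critical uniqueness step. Let $A$ be an even lattice of rank $2g$ without roots with $A^\sharp/A \simeq (\Z/p\Z)^m$. By Milgram's formula the Gauss sum $\gamma(\res A) = e^{2\pi i \cdot 2g/8}$ depends only on $g$, which combined with the computation of $\res Q_{2g}$ in Definition \ref{defqg} gives $\res A \simeq \res Q_{2g}$ as a linking quadratic space over $\Z/p\Z$. Since this residue contains a hyperbolic summand ${\rm H}(\Z/p\Z)^2$ of full Witt rank, Witt's theorem provides totally isotropic planes in $\res A$ and shows that $\O(\res A)$ acts transitively on isotropic lines, planes, and flags.

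Fix an isotropic plane $P \subset \res A$ and let $M \subset A^\sharp$ be its preimage. By Lemma \ref{lemm:sub_sup_latt_isot}, $M$ is an even lattice of rank $2g$ with $A \subset M$ of index $p^2$ and $\res M \simeq P^\perp/P$. The crucial step is to prove $M \simeq E \oplus E$ where $E = {\rm E}_g$; granted this, Proposition \ref{EplusE} delivers both $A \simeq Q_{2g}$ (uniqueness) and, via the surjectivity of $\O(A) \cap (\O(E) \times \O(E)) \to \GL(M/A)$, the transitivity of $\O(Q_{2g})$ on all three types of isotropic objects. For $(g,p,m) = (8,5,4)$, $P$ is Lagrangian in $\res A$ so $M$ is even unimodular of rank $16$, hence $M \simeq {\rm E}_8 \oplus {\rm E}_8$ or $M \simeq {\rm D}_{16}^+$. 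I rule out $D_{16}^+$ by pigeonhole: fix an orthonormal frame $e_1, \dots, e_{16}$ of $M \otimes \R$ realizing ${\rm D}_{16}^+$ in the standard way, and set $\bar e_i := \overline{2 e_i}/2 \in M/A \simeq \F_5^2$ (well-defined since $2 e_i \in {\rm D}_{16} \subset M$ and $2$ is invertible mod $5$). The roots $\pm e_i \pm e_j$ of $M$ must all have nonzero image in $M/A$, forcing $\bar e_i \neq \pm \bar e_j$ for all $i \neq j$, so the $16$ classes $\bar e_i$ are distinct modulo sign; but $\F_5^2/\{\pm 1\}$ has only $13$ elements, a contradiction. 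Hence $M \simeq {\rm E}_8 \oplus {\rm E}_8$. For $(g,p,m) = (6,3,6)$ one similarly computes $\res M \simeq {\rm I}_2 \otimes \F_3 \simeq \res({\rm E}_6 \oplus {\rm E}_6)$ via Gauss sums, and a classification argument for even rank-12 lattices in this genus that admit an index-$9$ root-free sublattice singles out $M \simeq {\rm E}_6 \oplus {\rm E}_6$.

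For the line-preimage assertion, a totally isotropic flag $(D, P)$ lifts to $A \subset B \subset M \simeq E \oplus E$ with $B = \{(x,y) \in E \oplus E : x + y \in \tilde D\}$ for the unique intermediate lattice $Q_g \subset \tilde D \subset E$ of index $p$ corresponding to $D$. By the proof of Proposition \ref{prop:uniq_sub_E} this $\tilde D$ is the Borel-de Siebenthal sublattice of $E$ obtained by deleting the node of weight $n_i = p$ in the extended Dynkin diagram, so $R(\tilde D) \simeq k \cdot {\bf A}_{p-1}$ with $k = g/(p-1)$. The roots of $B$ consist exactly of $(\alpha, 0)$ and $(0, \alpha)$ with $\alpha \in R(\tilde D)$, so $R(B) = 2k \cdot {\bf A}_{p-1} = m \cdot {\bf A}_{p-1}$, as required. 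The main obstacle in the above plan is the classification step for $(6,3,6)$: since the pigeonhole argument for $D_{16}^+$ has no direct rank-12 analogue, one must either appeal to a mass-formula computation showing that the genus of ${\rm E}_6 \oplus {\rm E}_6$ contains only that class with an index-$9$ root-free sublattice, or enumerate the admissible rank-$12$ root systems of type ${\rm ADE}$ compatible with the existence of $A$ and check that only ${\rm E}_6 \oplus {\rm E}_6$ fits.
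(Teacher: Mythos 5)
Your overall architecture coincides with the paper's: Milgram's formula to pin down $\res A$, the bijection between overlattices and totally isotropic subspaces, and the reduction of everything (uniqueness of $A$, transitivity on planes, lines and flags, and the description of the line preimages via the Borel--de Siebenthal sublattice $k\,{\bf A}_{p-1}\subset {\rm E}_g$) to the identification of the plane preimage $F$ with ${\rm E}_g\oplus{\rm E}_g$, followed by Proposition \ref{EplusE}. Your exclusion of ${\rm D}_{16}^{+}$ for $g=8$ is correct and genuinely different from the paper's: you argue that the $16$ classes $\bar e_i$ must be pairwise distinct in $\F_5^2/\{\pm 1\}$, which has only $13$ elements, whereas the paper inspects the labelled extended Dynkin diagram of ${\bf D}_{16}$ and uses Lemma \ref{lemm:orb_root_latt} to show that every index-$5$ sublattice of ${\rm D}_{16}$ contains an ${\bf A}_5$, whose Coxeter number $6>5$ forbids a further root-free index-$5$ sublattice. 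Your pigeonhole is arguably more elementary, but it is tied to the coordinate description of ${\rm D}_{16}$ and, as you note, does not transport to rank $12$.

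The genuine gap is exactly where you flag it: the case $(g,p,m)=(6,3,6)$ is left as a programme, not a proof. The paper completes it in two steps, both using machinery already set up in Section \ref{sectlattices}. First, since ${\rm res}\, F\simeq -\,{\rm res}\,{\rm A}_2^{2}$, the lattice $F$ is the orthogonal complement of a copy of ${\rm A}_2\oplus{\rm A}_2$ embedded in an even unimodular lattice of rank $16$, hence in ${\rm E}_8\oplus{\rm E}_8$ or ${\rm E}_{16}$; this identifies the genus as consisting of ${\rm E}_6\oplus{\rm E}_6$, ${\rm E}_8\oplus{\rm A}_2\oplus{\rm A}_2$ and a lattice ${\rm E}_{12}$ with root system ${\bf D}_{10}$. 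Second, the Coxeter-number criterion of Lemma \ref{lemm:orb_root_latt} (i)--(ii), applied to the labelled extended Dynkin diagrams of ${\bf E}_8$ and ${\bf D}_{10}$, shows that every index-$3$ sublattice of ${\rm E}_8$ or ${\rm D}_{10}$ contains an ${\bf A}_3$, which has Coxeter number $4>3$ and therefore no root-free index-$3$ sublattice; the existence of the root-free $A$ of index $9$ in $F$ thus forces $F\simeq{\rm E}_6\oplus{\rm E}_6$. To close your proof you should carry out one of the two routes you mention; the ``enumerate admissible root systems'' route is essentially this one, and without it the uniqueness and transitivity claims for ${\rm Q}_{12}$ are not established.
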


\begin{pf} Let $A$ be an even lattice of rank $2g$ with $A^\sharp/A \simeq (\Z/p\Z)^m$.
The Milgram formula applied to $A$ and ${\rm Q}_{2g}$ shows ${\rm res} \,A\, \simeq \,{\rm res}\, {\rm Q}_{2g}$ (see the proof of Corollary \ref{coreg}).
The even lattices $L$ containing $A$ with index $p^i$ are in natural bijection with the totally isotropic subspaces of dimension $i$ over $\Z/p\Z$ inside ${\rm res}\, A$, via the map $L \mapsto L/A$.
As we have ${\rm res}\, A \,\simeq {\rm H}(\Z/p\Z)^2 \oplus {\rm res}\, {\rm E}_g^{2}$, the maximal isotropic subspaces of ${\rm res}\, A$ have dimension $2$ over $\Z/p\Z$.
Fix such a plane in ${\rm res}\, A$ and denote by $F$ its inverse image in $A^\sharp$.
We have ${\rm res}\, F \simeq {\rm res}\, {\rm E}_g^2$, so $F$ is an even lattice with same rank and residue as ${\rm E}_g \oplus {\rm E}_g$. \ps
Assume first $g=8$. Then $F$ is unimodular. We know since Witt that it is either isometric to ${\rm E}_8 \oplus {\rm E}_8$, or to a certain lattice ${\rm E}_{16}$ with root system ${\bf D}_{16}$.
Assuming furthermore that $A$ has no root, we claim that the $F$ cannot be isometric to ${\rm E}_{16}$.
Indeed, using the method explained in the proof of Proposition \ref{prop:uniq_sub_E}, Lemma \ref{lemm:orb_root_latt} (i) and an inspection of the Dynkin diagram of ${\bf D}_{16}$ (including the $n_i$'s):
\begin{center}
\dynkin[upside down,labels={1,2,2,2,2,2,2,2,2,2,2,2,2,2,1,1},edge length=.7cm]{D}{16}
\end{center}
show that an index $5$ subgroup of ${\rm D}_{16}$ always contains an irreducible root system isomorphic to ${\bf A}_5$.
But ${\bf A}_5$ has Coxeter number $6$, so ${\rm A}_5$ has no index $5$ subgroup without roots by Lemma \ref{lemm:orb_root_latt} (ii): this proves the claim.\ps
Assume now $g=6$. We have ${\rm res}\, F\, \simeq \,- \,{\rm res} \,{\rm A}_2^2$.
This is well-known to imply that $F$ is isometric to $\rmE_6 \oplus \rmE_6$, $\rmE_8 \oplus
  \mathrm{A}_2 \oplus \mathrm{A}_2$ or to a certain lattice ${\rm E}_{12}$ having root system ${\bf D}_{10}$.
  (One way to prove this is to start by observing that such a lattice is the
  orthogonal of some ${\rm A}_2 \oplus {\rm A}_2$ embedded an even unimodular
  lattice, hence in ${\rm E}_8 \oplus {\rm E}_8$ or in ${\rm E}_{16}$.)
   An inspection of the Dynkin diagrams of ${\bf E}_8$ and ${\bf D}_{10}$ shows
   that an index $3$ subgroup of ${\rm E}_8$ or ${\rm D}_{10}$ always contains an irreducible root system isomorphic to ${\bf A}_3$,
   whose Coxeter number is $>3$.
   Assuming $A$ has no root, this implies $F \simeq {\rm E}_6 \oplus {\rm E}_6$ by  Lemma \ref{lemm:orb_root_latt} (ii). \ps
  We have just shown that in both cases, assuming $A$ has no roots, the inverse image in $A^\sharp$ of a totally isotropic plane of ${\rm res}\, A$ is isometric to ${\rm E}_g \oplus {\rm E}_g$.
  By Proposition \ref{EplusE}, there is a unique isometry class of pairs $(A,F)$ with $F \simeq {\rm E}_g \oplus {\rm E}_g$, $A$ of index $p^2$ in $F$, and ${\rm R}(A)= \emptyset$.
  This shows $A \simeq {\rm Q}_{2g}$ as well as the transitivity of ${\rm O}({\rm Q}_{2g})$ on the totally isotropic planes in ${\rm res}\, {\rm  Q}_{2g}$.
  Moreover, the same proposition also asserts that the stabilizer in ${\rm O}({\rm Q}_{2g})$ of an isotropic plane $P$ in ${\rm res}\, {\rm Q}_{2g}$ surjects naturally onto ${\rm GL}(P)$.
  This shows the transitivity of ${\rm O}({\rm Q}_{2g})$ on the isotropic lines (resp. flags) in ${\rm res}\, {\rm Q}_{2g}$. \ps
  Fix an even lattice $B \subset {\rm E}_g$ containing ${\rm Q}_g$ with index $p$.
  We known from Proposition \ref{prop:uniq_sub_E} that such a $B$ exists and is a root lattice with root system $\frac{g}{p-1} {\bf A}_{p-1}$.
  The sublattice $C \subset {\rm E}_g \oplus {\rm E}_g$ whose elements $(x,y)$ satisfy $x+y \in B$ contains ${\rm Q}_{2g}$,
  and defines an isotropic line $C/{\rm Q}_{2g}$ in ${\rm res}\, {\rm Q}_{2g}$.
  Its root system ${\rm R}(C)$ is isomorphic to $2\frac{g}{p-1} {\bf A}_{p-1}$.
  This concludes the proof of the proposition.
\end{pf}

\begin{prop} \label{prop:doubling}
  For $g=6,8$, the natural morphism $\O({\rm Q}_{2g}) \to \O(\res \, {\rm
  Q}_{2g})$ is surjective, with kernel isomorphic to $\Z/3\Z$ for $g=6$, $\Z/5\Z
  \rtimes \Z/2\Z$ for $g=8$.
\end{prop}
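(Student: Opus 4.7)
The plan runs in two steps: explicit computation of the kernel $K = \ker \Phi$ using the construction $\rmQ_{2g} \subset \rmE_g \oplus \rmE_g$ of Definition \ref{defqg}, then a generation argument for surjectivity.

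For the kernel, any $\gamma \in K$ preserves every overlattice of $\rmQ_{2g}$, in particular $\rmE_g \oplus \rmE_g$. Since $\rmE_g$ is indecomposable we have $\O(\rmE_g \oplus \rmE_g) = \O(\rmE_g)^2 \rtimes \langle\sigma\rangle$ with $\sigma$ the swap, so write $\gamma = (\alpha,\beta)\sigma^\epsilon$ with $\epsilon \in \{0,1\}$. Using the explicit description $\rmQ_{2g}^\sharp = \{(u,v) \in (\rmQ_g^\sharp)^2 : u-v \in \rmE_g^\sharp\}$, the conditions of preserving $\rmQ_{2g}$ and of triviality on $\res\,\rmQ_{2g}$ translate, via Proposition \ref{prop:aut_Qg}, into algebraic conditions on the induced actions of $\alpha,\beta$ on $\res\,\rmQ_g$. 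A direct computation shows that $\alpha,\beta$ must lie in the abelian subgroup $U_0 \subset \O(\res\,\rmQ_g)$ pointwise fixing both $P := \rmE_g/\rmQ_g$ and $\rmQ_g^\sharp/\rmE_g$ -- that is, the unipotent radical of the parabolic stabilizing $P$, which is isomorphic to $\Lambda^2 P \simeq \Z/p\Z$ -- and must satisfy $\alpha + \beta \equiv 2$ on $\res\,\rmQ_g$. Each $\alpha \in U_0$ writes as $\alpha = 1 + N$ with $N^2 = 0$, so this last relation gives $\beta = \alpha^{-1}$, contributing $|U_0| = p$ elements per value of $\epsilon$.

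The dichotomy between $g=6$ and $g=8$ appears in the swap case $\epsilon=1$: applying the residue-triviality condition to elements $(e,0) \in \rmQ_{2g}^\sharp$ with $e \in \rmE_g^\sharp$ forces $\rmE_g^\sharp = \rmE_g$. This holds for $g=8$ (since $\rmE_8$ is unimodular) but fails for $g=6$ (where $\res\,\rmE_6 \simeq \Z/3\Z \neq 0$). Hence for $g=6$ only $\epsilon=0$ contributes and $K \simeq \Z/3\Z$. For $g=8$ both cases contribute, and a direct calculation shows $(\alpha,\alpha^{-1})\sigma$ has order $2$ and conjugates $(\beta,\beta^{-1})$ to $(\beta^{-1},\beta)$ (using abelianness of $U_0$), yielding $K \simeq \Z/5\Z \rtimes \Z/2\Z$.

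For surjectivity let $I := \Phi(\O(\rmQ_{2g}))$. By Proposition \ref{coregdoubl}, $I$ acts transitively on maximal totally isotropic planes $P'$ of $\res\,\rmQ_{2g}$, and by Proposition \ref{EplusE} the stabilizer of such a $P'$ in $I$ contains the Levi $\GL(P')$ of the parabolic of $\O(\res\,\rmQ_{2g})$ stabilizing $P'$. The kernel computation, applied now to the non-kernel pairs $(1,u)$ with $u \in U_0 \setminus \{1\}$, shows these preserve $\rmQ_{2g}$ but act non-trivially on $\res\,\rmQ_{2g}$ via an element of the unipotent radical $U'$ of the parabolic; since $|U'| = p = |U_0|$, we obtain all of $U'$ inside $I$. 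For $g=8$ this gives the whole parabolic $\GL(P') \ltimes U'$ inside $I$; combined with transitivity of $I$ on maximal isotropic planes, the orbit-stabilizer identity forces $|I| = |\O(\res\,\rmQ_{16})|$, hence $I = \O(\res\,\rmQ_{16})$. For $g=6$ the parabolic has an additional anisotropic Levi factor $\O^-(P'^\perp/P') \simeq \O^-_2(\F_3)$, identified with the action on the middle piece $(\rmE_6^\sharp \oplus \rmE_6^\sharp)/(\rmE_6 \oplus \rmE_6) \simeq (\res\,\rmE_6)^2$ of the filtration of $\res\,\rmQ_{12}$. This factor is obtained in $I$ by combining the swap $\sigma$ (which acts as the swap on $(\res\,\rmE_6)^2$) and elements $(1,\alpha)$ with $\alpha \in \O(\rmE_6) \cap \O(\rmQ_6)$ acting trivially on $P$ but as $-1$ on $\res\,\rmE_6$ (such $\alpha$ exists by Proposition \ref{prop:aut_Qg}); a swap together with a diagonal $-1$ reflection generate $\O^-_2(\F_3)$ on any $2$-dimensional anisotropic form over $\F_3$, completing the parabolic inside $I$, so the same orbit-stabilizer argument gives $I = \O(\res\,\rmQ_{12})$.

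The main obstacle is the $g=6$ surjectivity: producing the $\O^-_2(\F_3)$ Levi factor, absent in the hyperbolic $g=8$ case, requires careful tracking of the action on the middle anisotropic subquotient of the length-three filtration $0 \subset P' \subset (P')^\perp \subset \res\,\rmQ_{12}$, a subquotient that collapses to zero in the $g=8$ setting.
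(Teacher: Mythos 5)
Your kernel computation is correct and, modulo packaging, is the paper's: the paper organizes it through a composition series $U_1\lhd U_2\lhd U_3\lhd \O(\rmE_g)\cap\O(\rmQ_g)$, but the outcome --- the pairs $(\alpha,\alpha^{-1})$ with $\alpha$ in the order-$p$ group you call $U_0$, extended by the swap exactly when $\rmE_g^\sharp=\rmE_g$ --- is the same, and your $g=8$ surjectivity argument also goes through. The genuine gap is the surjectivity for $g=6$: the unipotent radical $U'$ of the stabilizer of a maximal totally isotropic plane $P'$ in $\O(\res\,\rmQ_{12})$ does \emph{not} have order $p=3$. Because $\res\,\rmQ_{12}\simeq {\rm H}(\Z/3\Z)^2\oplus(\res\,\rmE_6)^2$ has a $2$-dimensional anisotropic part, the filtration $0\subset P'\subset (P')^\perp\subset \res\,\rmQ_{12}$ gives $U'$ a two-step filtration with graded pieces $\Hom\bigl(\res\,\rmQ_{12}/(P')^\perp,\ (P')^\perp/P'\bigr)$, of order $3^4=81$, and $\Lambda^2 P'\simeq\Z/3\Z$; hence $|U'|=3^5$. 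Your elements $(1,u)$ with $u\in U_0$ land only in the central piece $\Lambda^2 P'$ of order $3$, which is normalized by the Levi, so the generators you list do not account for the order-$81$ subquotient; the parabolic is not shown to lie in $I$, and the orbit--stabilizer count $|I|=|\O(\res\,\rmQ_{12})|$ no longer closes (it is off by a factor of $81$).

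To repair this you must exhibit elements of $\O(\rmQ_{12})$ hitting the subquotient $\Hom\bigl(\res(\rmE_6\oplus\rmE_6),\ (\rmE_6\oplus\rmE_6)/\rmQ_{12}\bigr)$. The paper does so with pairs $(g_1,g_2)$, where each $g_i\in\O(\rmE_6)\cap\O(\rmQ_6)$ acts trivially on $\rmE_6/\rmQ_6$ and on $\res\,\rmE_6$ but nontrivially through $\Hom(\rmE_6^\sharp/\rmE_6,\ \rmE_6/\rmQ_6)\simeq(\Z/3\Z)^2$ --- the group called $U_2$ there, which is strictly larger than your $U_0=U_1$ precisely because $\res\,\rmE_6\neq 0$. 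Such $g_i$ exist by the surjectivity in Proposition \ref{prop:aut_Qg}, and the pairs $(g_1,g_2)\in U_2\times U_2$ surject onto the missing piece of order $81$. This is exactly the phenomenon your $g=8$ argument never encounters, since $\res\,\rmE_8=0$ forces $U_2=U_1$ and $|U'|=5$ there; note that the collapse for $g=8$ affects not only the Levi factor $\O((P')^\perp/P')$ you did add for $g=6$, but also the unipotent radical itself.
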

\begin{proof} Set $E={\rm E}_g$,  $Q={\rm Q}_g$  and consider the group $S={\rm O}(E) \cap {\rm O}(Q)$. The inclusions $Q \subset E \subset E^\sharp \subset Q^\sharp$ define a  composition series $U_1 \lhd U_2 \lhd U_3 \lhd S$, where:
  \begin{itemize}
  \item $U_3$ is the kernel of the natural morphism $\beta : S \rightarrow {\rm GL}(E/Q)$,\ps
  \item $U_2$ is the kernel of the natural morphism $\beta_3 : U_3 \rightarrow {\rm O}({\rm res}\, E)$, \ps
  \item $U_1$ is the kernel of the natural morphism $\beta_2 : U_2 \rightarrow  \Hom(E^\sharp/E,E/Q)$ given by $g(x) = x + \beta_2(g)(\ol{x})$ for all $g$ in $U_2$ and all $x$ in $E^\sharp$ with image $\ol{x}$ in $E^\sharp/E$.
  \end{itemize}
Moreover, if $\Hom(Q^\sharp/E^\sharp,   E/Q)^{\rm antisym}$ denotes the group of antisymmetric group homomorphisms $Q^\sharp/E^\sharp \rightarrow E/Q$,
with $E/Q$ identified with ${\rm Hom}(Q^\sharp/E^\sharp,\Q/\Z)$ using the symmetric bilinear form of $Q\otimes \Q$, we have a natural morphism: \begin{itemize}
\item $\beta_1 :  U_1 \to \Hom(Q^\sharp/E^\sharp,   E/Q)$, given by $g(x) = x + \beta_1(g)(\ol{x})$ for all $g$ in $U_1$ and all $x $ in $Q^\sharp/Q$ with image $\ol{x}$ in $Q^\sharp/E^\sharp$.
\end{itemize}
\noindent Last but not least, since the natural map $\O(Q) \to \O(\res \,Q)$ is an isomorphism by Proposition \ref{prop:aut_Qg}, the morphisms $\beta$, $\beta_3$, $\beta_2$ above are surjective, and $\beta_1$ is an isomorphism. Let $p$ denote the prime such that we have $E/Q \simeq (\Z/p\Z)^2$, we have proved in particular $U_1 \simeq \Z/p\Z$. \ps

Set now $F = E \oplus E$, $A = {\rm Q}_{2g}$ and consider the group $T = {\rm O}(F) \cap {\rm O}(A)$.  On the one hand, we have $\O(F) = \O(E)^2 \rtimes \mathfrak{S}_2$. As
  $\mathfrak{S}_2$ clearly stabilizes $A$, this shows
   \[ T = G \rtimes \mathfrak{S}_2 \hspace{.5 cm} \text{with} \hspace{,5 cm} G = \left\{ (g_1,g_2) \in S \times S \,\middle|\,
  \beta(g_1) = \beta(g_2) \right\}.\]
On the other hand, $T$ is also the stabilizer in ${\rm O}(A)$ of the subspace $F/A$ of ${\rm res}\, A$.
By Proposition \ref{coregdoubl} we are left to prove that the natural morphism $\nu : T \rightarrow \overline{T}$,
where $\overline{T}$ is the stabilizer of $F/A$ in ${\rm O}({\rm res} A)$, is a surjection whose kernel is as in the statement. \ps

  To the inclusions $A \subset F \subset F^\sharp \subset A^\sharp$ is
  associated as above a composition series $V_1 \lhd V_2 \lhd V_3 \lhd
  \overline{T}$, whose successive quotients $\overline{T}/V_3$, $V_3/V_2$,
  $V_2/V_1$ and $V_1$ are naturally identified with the groups $\GL(F/A)$, ${\rm
  O}( {\rm res}\, F)$, $\Hom(F^\sharp/F,F/A)$ and
  $\Hom(A^\sharp/F^\sharp,F/A)^{\rm antisym}$.
  The following observations below will prove $\nu(U_i \times U_i \rtimes \mathfrak{S}_2) = V_i$ for $i=1,2,3$, $\nu(T) =\overline{T}$ and identify ${\rm ker}\, \nu$.\ps

  {\it Action of $\mathfrak{S}_2$.}
  By definition of $A$ the group $\mathfrak{S}_2$ acts trivially on $F/A$, hence
  on $A^\sharp/F^\sharp$ as well.
  Moreover, it swaps the two factors of ${\rm res}\, F \,=\, {\rm res}\, E
  \oplus {\rm res}\, E$.
  Recall that this linking quadratic space is $0$ for $g=8$, isomorphic to ${\rm
  H}(\Z/3\Z)$ for $g=6$.
  As $V_2$ is a $p$-group and $p$ is odd it follows that $\mathfrak{S}_2$ acts
  trivially on ${\rm res}\, A$ for $g=8$. \ps

  {\it Action of $G$ on $F/A$}. As $\mathfrak{S}_2$ acts trivially on $F/A$, Proposition \ref{EplusE} implies that $\nu$ induces an isomorphism $T/((U_3 \times U_3) \rtimes \mathfrak{S}_2)  \simeq  \overline{T}/V_3$.\ps

  {\it Restriction of $\nu$ to $(U_3 \times U_3) \rtimes \mathfrak{S}_2$}.
  For $g=(g_1,g_2)$ in $U_3 \times U_3$ and $(x_1,x_2)$ in $F^\sharp=E^\sharp
  \oplus E^\sharp$, we have $g(x_1,x_2) \equiv (g_1(x_1), g_2(x_2)) \bmod F$.
  So $\nu$ induces an isomorphism between $U_3/U_2 \times U_3/U_2$ and the
  subgroup $\O(\res E) \times \O(\res E)$ of $\O(\res\, E\, \oplus\, {\rm res}
  \,E)$.
  This subgroup has index $2$ for $g=6$ (and $1$ for $g=8$) but recall that in
  this case $\mathfrak{S}_2$ swaps the two factors of ${\rm res}\, E \oplus {\rm
  res}\, E$.\ps

  {\it Restriction of $\nu$ to $U_2 \times U_2$}.
  The map $\iota : E/Q \rightarrow F/A, x \mapsto (x,0),$ is an isomorphism.
  For $g=(g_1,g_2)$ in $U_2 \times U_2$ and $(x_1,x_2)$ in $F^\sharp=E^\sharp
  \oplus E^\sharp$ we thus have the following equalities in $F/A$ :
  \[ g(x_1,x_2) - (x_1,x_2) = (\beta_2(g_1)(x_1), \beta_2(g_2)(x_2)) =
  \iota(\beta_2(g_1)(x_1)+\beta_2(g_2)(x_2)). \]
  This shows that $\nu$ induces an isomorphism $U_2/U_1 \times U_2/U_1 \isomo
  V_2/V_1$. \ps

  {\it Restriction of $\nu$ to $U_1 \times U_1$}.
  We have $A^\sharp = \{ (y_1,y_2) \in Q^\sharp \oplus Q^\sharp\, |\, y_1 \equiv
  y_2 \bmod E^\sharp\}$.
  For $g = (g_1,g_2)$ in $U_1 \times U_1$ and $(x_1+y,x_2+y)$ in
  $A^\sharp$ with $x_i$ in $E^\sharp$ and $y$ in $Q^\sharp$, we have the
  following equality in $F/A$ (with $\iota$ defined as above):
  \[ g(x_1+y,x_2+y) - (x_1+y,x_2+y) = (\beta_1(g_1)(y), \beta_1(g_2)(y)) =
  \iota(\beta_1(g_1)(y)+\beta_1(g_2)(y)).  \]
  This shows $\nu(U_1 \times U_1)=V_1$ and
  \begin{equation} \label{kernuu1}
    {\rm ker}\, \nu_{| U_1 \times U_1}= \{(g_1,g_2) \in U_1 \times U_1 \,|\,
    \beta_1(g_1) + \beta_1(g_2) = 0 \}\simeq \Z/p\Z.
  \end{equation}
  \ps

\noindent All in all, we have shown $\nu(T)=\overline{T}$, and if $K \simeq \Z/p\Z$ denotes the group in \eqref{kernuu1}, $\ker \nu = K \rtimes \mathfrak{S}_2$ for $g=8$, and $\ker \nu = K$ for $g=6$.
\end{proof}

\begin{prop}
  The lattice ${\rm Q}_{12}$ is orientable.
\end{prop}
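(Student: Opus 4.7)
The plan is to show $\det\colon \O(\mathrm{Q}_{12}) \to \{\pm 1\}$ is trivial. By Proposition \ref{prop:doubling} the natural map $\pi\colon \O(\mathrm{Q}_{12}) \to \O(\res\,\mathrm{Q}_{12})$ is surjective with kernel of odd order $3$, so $\det$ vanishes on $\ker\pi$ and factors through a character $\chi$ of $\O(\res\,\mathrm{Q}_{12})$. Applying Eichler's theorem as in the proof of Proposition \ref{q8orientable} to the $6$-dimensional nondegenerate isotropic quadratic space $\res\,\mathrm{Q}_{12}$ over $\F_3$ gives $\O(\res\,\mathrm{Q}_{12})^{\mathrm{ab}} \simeq \{\pm 1\}^2$ via $(\det, \mathrm{spin})$. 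It suffices to exhibit two elements of $\O(\mathrm{Q}_{12})$ with determinant $+1$ whose images generate this $\{\pm 1\}^2$.

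The first element is the coordinate swap $\sigma\colon (x,y) \mapsto (y,x)$ on $F = \rmE_6 \oplus \rmE_6$: it preserves $\mathrm{Q}_{12}$ and has determinant $(-1)^6 = +1$. In the filtration $\mathrm{Q}_{12} \subset F \subset F^\sharp \subset \mathrm{Q}_{12}^\sharp$, $\sigma$ acts trivially on the outer quotients (since $x+y = y+x$ and by duality) and swaps the two copies of $\res\,\rmE_6 \simeq \langle 2 \rangle$ (over $\F_3$) in $F^\sharp/F$. That swap is a reflection along $(1,-1)$ with norm $2+2=1$, a square in $\F_3^\times$; so $\bar\sigma$ has image $(-1,+1)$ in $\{\pm 1\}^2$.

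For the second element, I would work inside $T = G \rtimes \mathfrak{S}_2$ from Proposition \ref{prop:doubling}. By Proposition \ref{prop:aut_Qg}, $S = \O(\rmE_6) \cap \O(\mathrm{Q}_6)$ identifies with the parabolic stabilizer in $\O(\res\,\mathrm{Q}_6)$ of the isotropic plane $\rmE_6/\mathrm{Q}_6$, whose Levi is $\GL_2(\F_3) \times \O(\res\,\rmE_6) \simeq \GL_2(\F_3) \times \{\pm 1\}$. Choose $g_0 \in S$ lifting the Levi element $(1, -1)$, and set $\eta = (g_0, \mathrm{id})$; since $\beta(g_0) = \beta(\mathrm{id}) = 1$, one has $\eta \in G \subset T \subset \O(\mathrm{Q}_{12})$. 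A filtration analysis analogous to that for $\sigma$ shows that $\eta$ acts on $F^\sharp/F = (\res\,\rmE_6)^2$ as a reflection along $(1,0)$ with norm $2$ (a non-square), so its image in $\O(\res\,\mathrm{Q}_{12})^{\mathrm{ab}}$ is $(-1,-1)$. The same filtration analysis inside $\res\,\mathrm{Q}_6$ gives that $g_0$ has image $(-1,-1)$ in $\O(\res\,\mathrm{Q}_6)^{\mathrm{ab}}$; and inspecting the explicit reflections $\gamma,\gamma'$ in the proof of Proposition \ref{q8orientable} (which satisfy $(\det g)$ on $\mathrm{Q}_6$ equal to $-1$ and $+1$ respectively, with images $(-1,+1)$ and $(-1,-1)$) identifies the character $\det|_{\mathrm{Q}_6}$ of $\O(\res\,\mathrm{Q}_6)^{\mathrm{ab}} = \{\pm 1\}^2$ with the product $(a,b) \mapsto ab$. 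Hence $\det g_0 = (-1)(-1) = +1$, and so $\det \eta_{|\mathrm{Q}_{12}} = \det g_0 = +1$. The pair $\{(-1,+1),(-1,-1)\}$ generates $\{\pm 1\}^2$, so $\chi$ is trivial and $\mathrm{Q}_{12}$ is orientable.

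The main obstacle I expect is the careful bookkeeping of spinor norms in the two parabolic stabilizers and the matching identification of $\det|_{\mathrm{Q}_6}$ with the product character on $\O(\res\,\mathrm{Q}_6)^{\mathrm{ab}}$; these are routine $\F_3$-linear algebra but demand care with the square-classes of $\F_3^\times$ and with the $\det(l_W)$-correction in the spinor norm of elements of a parabolic $\mathrm{Stab}(W)$.
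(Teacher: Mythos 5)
Your proof is correct and takes essentially the same route as the paper's: the same reduction via Proposition \ref{prop:doubling} (odd kernel) and Eichler's theorem, and the same two elements, namely the coordinate swap on $\rmE_6 \oplus \rmE_6$ and $(s,\mathrm{id})$ with $s$ the Levi element of ${\rm O}({\rm Q}_6)\cap{\rm O}(\rmE_6)$ acting by $-1$ on $\res \rmE_6$. The only cosmetic difference is the last step: the paper pins down $\det s = +1$ by noting $s$ is conjugate to the element $\gamma'$ of Proposition \ref{q8orientable}, whereas you identify $\det$ as the product character on ${\rm O}(\res\,{\rm Q}_6)^{\rm ab}\simeq\{\pm1\}^2$ using $\gamma,\gamma'$ and evaluate it at the class of $s$ --- the same content in different clothing.
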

\begin{proof}
  As the kernel of
  ${\rm O}({\rm Q}_{12}) \rightarrow {\rm O}({\rm res}\, {\rm Q}_{12})$
  has odd cardinality (namely $3$) by Proposition
  \ref{prop:doubling}, it is contained in $\SO({\rm Q}_{12})$.
  Arguing as in the proof of Proposition \ref{q8orientable}, we are left to find
  two elements $g, g'$ in ${\rm O}({\rm Q}_{12})$ with determinant $1$ and whose
  images in ${\rm O}({\rm res}\, {\rm Q}_{12})$ are reflections with distinct
  spinor norms.
  In the following arguments, it will be convenient to view linking quadratic
  spaces over $\Z/3\Z$ as traditional quadratic spaces over $\Z/3\Z$ by
  multiplying their quadratic form by $3$ (which becomes then $\Z/3\Z$-valued
  instead of $\frac{1}{3}\Z/\Z$-valued), so that it makes sense to talk about
  their determinant.
  \par

  Consider first the non-trivial element $g$ of the group $\mathfrak{S}_2$
  naturally acting on ${\rm E}_6 \oplus {\rm E}_6$.
  Then $g$ acts trivially on $({\rm E}_6 \oplus {\rm E}_6)/{\rm Q}_{12}$, and
  in the obvious way on ${\rm res} \,{\rm E}_6 \,\oplus\, {\rm res} \, {\rm
  E}_6$.
  It acts thus as a reflection with spinor norm $2 \cdot \frac{1}{2}\det ({\rm
  res}\, {\rm E}_6)$ in $(\Z/3\Z)^\times$ (the squares of $(\Z/3\Z)^\times$ are
  $\{1\}$).
  Moreover, we have $\det g = (-1)^6=1$. \par

  Let $s$ be an order two element of ${\rm O}({\rm Q}_6) \cap {\rm O}({\rm E}_6)$ acting trivially on
  ${\rm E}_6/{\rm Q}_6$ and by $-1$ on ${\rm res}\, {\rm E}_6$.
  Such an $s$ exists by Proposition \ref{prop:aut_Qg}.
  By construction, it is a reflection in ${\rm O}({\rm res}\, {\rm Q}_6)$ with
  spinor norm $\frac{1}{2} \det ({\rm res}\, {\rm E}_6)\,=  \det ({\rm res}\,
  {\rm A}_2)$ in $(\Z/3\Z)^\times$.
  So $s$ is conjugate in ${\rm O}({\rm Q}_6)$ to the element denoted $\gamma'$
  in the proof of Proposition \ref{q8orientable}, and we have thus $\det s =
  \det \gamma' =1$ as was shown {\it loc. cit.}
  Consider now the order $2$ element $g' = (s,1)$ in ${\rm O}({\rm E}_6) \times {\rm O}({\rm E}_6)$.
  As $s$ preserves ${\rm Q}_6$ ands acts trivially on ${\rm E}_6/{\rm Q}_6$, the element $g'$ preserves ${\rm Q}_{12}$
  and has a trivial image in ${\rm
  GL}(({\rm E}_6 \oplus {\rm E}_6)/{\rm Q}_{12})$. It acts as ${\rm diag}(-1,1)$
  in ${\rm res} \,{\rm E}_6 \,\oplus\, {\rm res} \, {\rm E}_6$.
  It acts thus on ${\rm res}\, {\rm Q}_{12}$ as a reflection with spinor norm
  $\frac{1}{2}\det({\rm res}\, {\rm E}_6)$.
  This spinor norm is not the same as that of $g$ as $2$ is not a square in
  $(\Z/3\Z)^\times$.
  The orientabiliy of ${\rm Q}_{12}$ follows then from the equalities $\det g' =
  \det s \times 1 =1$.
\end{proof}

We finally set ${\rm Q}_0=0$ and ${\rm Q}_{24}= {\rm Leech}$.
We denote by ${\rm n}_g$ the number of isometric embeddings ${\rm Q}_g \rightarrow {\rm Leech}$,
and by ${\rm K}_g$ the kernel of the morphism ${\rm O}({\rm Q}_g) \rightarrow {\rm O}({\rm res}\, {\rm Q}_g)$.
By Propositions \ref{prop:aut_Qg} and \ref{prop:doubling} we have $|{\rm K}_g|=1$ for $g <12$, $|{\rm K}_{12}|=3$, $|{\rm K}_{16}|=10$, and of course ${\rm K}_{24}={\rm O}({\rm Leech})$. \ps

\begin{prop} For $g$ in $\{0, 8,12,16, 24\}$ there is a unique ${\rm O}({\rm Leech})$-orbit of sublattices $Q$ of ${\rm Leech}$ with $Q \simeq {\rm Q}_g$, and we have ${\rm n}_g \,|{\rm K}_{24-g}| = |{\rm O}({\rm Leech})|$.
\end{prop}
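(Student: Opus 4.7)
The cases $g=0$ and $g=24$ are trivial (${\rm Q}_{24}={\rm Leech}$ gives ${\rm n}_{24}=|{\rm O}({\rm Leech})|$ and ${\rm K}_0=1$), so assume $g \in \{8,12,16\}$. Let $Q \subset {\rm Leech}$ be a sublattice with $Q \simeq {\rm Q}_g$. The plan proceeds in three steps.

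\medskip
\noindent \textbf{Step 1 (Orthogonals).} I would first show $Q^\perp \simeq {\rm Q}_{24-g}$. As ${\rm Leech}$ is unimodular, $Q^\perp$ is an even lattice of rank $24-g$, without roots (being a sublattice of ${\rm Leech}$), and satisfies ${\rm res}\, Q^\perp \simeq -\,{\rm res}\, Q$ by a standard argument (cf.\ Lemma \ref{lemmedefq}). Comparing with Propositions \ref{coreg} and \ref{coregdoubl} identifies $Q^\perp$ with ${\rm Q}_{24-g}$. I would also record that $Q$ is automatically \emph{primitive} in ${\rm Leech}$: any proper even overlattice of ${\rm Q}_g$ associated to a non-trivial isotropic subspace of ${\rm res}\, {\rm Q}_g$ contains roots by Propositions \ref{coreg} and \ref{coregdoubl}, which is impossible inside ${\rm Leech}$.

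\medskip
\noindent \textbf{Step 2 (Uniqueness of the orbit).} Given $Q_1,Q_2 \subset {\rm Leech}$ both isometric to ${\rm Q}_g$, I would use the gluing description
\[ Q_i \oplus Q_i^\perp \,\subset\, {\rm Leech} \,\subset\, Q_i^\sharp \oplus (Q_i^\perp)^\sharp. \]
Since ${\rm Leech}$ is unimodular and $Q_i$ is primitive, the glue subgroup $\Gamma_i := {\rm Leech}/(Q_i \oplus Q_i^\perp)$ is a Lagrangian of ${\rm res}\, Q_i \oplus {\rm res}\, Q_i^\perp$ whose two projections are isomorphisms; hence it is the graph of an anti-isometry $\alpha_i : {\rm res}\, Q_i \isomo {\rm res}\, Q_i^\perp$. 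Using Step 1, fix isometries $\phi_0 : Q_1 \isomo Q_2$ and $\psi_0 : Q_1^\perp \isomo Q_2^\perp$. The map $\phi_0 \oplus \psi_0$ extends to an isometry of ${\rm Leech}$ precisely when $\ol{\psi_0} \circ \alpha_1 \circ \ol{\phi_0}^{-1} = \alpha_2$ in ${\rm Hom}({\rm res}\, Q_2, {\rm res}\, Q_2^\perp)$. Replacing $\psi_0$ by $\sigma \circ \psi_0$ for a suitable $\sigma \in {\rm O}(Q_2^\perp)$ amounts to multiplying the left-hand side by $\ol{\sigma} \in {\rm O}({\rm res}\, Q_2^\perp)$, so the desired $\sigma$ exists by the surjectivity of ${\rm O}(Q_2^\perp) \rightarrow {\rm O}({\rm res}\, Q_2^\perp)$ granted by Propositions \ref{prop:aut_Qg} and \ref{prop:doubling}.

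\medskip
\noindent \textbf{Step 3 (Orbit-stabilizer count).} Let $S \subset {\rm O}({\rm Leech})$ be the stabilizer of $Q$. As $S$ also stabilizes $Q^\perp$, it embeds into ${\rm O}(Q) \times {\rm O}(Q^\perp)$; more precisely, it consists of pairs $(\gamma,\gamma') \in {\rm O}(Q) \times {\rm O}(Q^\perp)$ whose images in ${\rm O}({\rm res}\, Q) \times {\rm O}({\rm res}\, Q^\perp)$ preserve the graph $\Gamma$ of $\alpha : {\rm res}\, Q \isomo {\rm res}\, Q^\perp$, i.e.\ satisfy $\alpha \circ \ol{\gamma} = \ol{\gamma'} \circ \alpha$. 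The surjections ${\rm O}(Q) \twoheadrightarrow {\rm O}({\rm res}\, Q)$ (with kernel $K_g$) and ${\rm O}(Q^\perp) \twoheadrightarrow {\rm O}({\rm res}\, Q^\perp)$ (with kernel $K_{24-g}$) then give
\[ |S| \,=\, |{\rm K}_g| \cdot |{\rm K}_{24-g}| \cdot |{\rm O}({\rm res}\, {\rm Q}_g)|. \]
By Step 2, the number of sublattices of ${\rm Leech}$ isomorphic to ${\rm Q}_g$ is $|{\rm O}({\rm Leech})|/|S|$. Multiplying by $|{\rm O}({\rm Q}_g)| = |{\rm K}_g| \cdot |{\rm O}({\rm res}\, {\rm Q}_g)|$ to pass from sublattices to embeddings gives
\[ {\rm n}_g \,=\, \frac{|{\rm O}({\rm Q}_g)|\cdot|{\rm O}({\rm Leech})|}{|S|} \,=\, \frac{|{\rm O}({\rm Leech})|}{|{\rm K}_{24-g}|}, \]
which is the claimed identity.

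\medskip
The main delicate point is the commutativity of the various maps in Step 2, and keeping track in Step 3 of which kernel lies where: both rely crucially on the surjectivity of ${\rm O}({\rm Q}_g) \to {\rm O}({\rm res}\, {\rm Q}_g)$ established in Propositions \ref{prop:aut_Qg} and \ref{prop:doubling}.
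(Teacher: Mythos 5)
Your proof is correct and follows essentially the same route as the paper: identify $Q^\perp$ with ${\rm Q}_{24-g}$ via residues, use the no-roots classification (Propositions \ref{coreg} and \ref{coregdoubl}) to see that the glue group is the graph of an anti-isometry, and then invoke the surjectivity of ${\rm O}({\rm Q}_{g'}) \to {\rm O}({\rm res}\,{\rm Q}_{g'})$ from Propositions \ref{prop:aut_Qg} and \ref{prop:doubling} for both transitivity and the stabilizer count. The only difference is cosmetic: the paper phrases Step 2 as classifying overlattices of a fixed ${\rm Q}_g \oplus {\rm Q}_{g'}$ up to ${\rm O}({\rm Q}_g)\times{\rm O}({\rm Q}_{g'})$, whereas you adjust an isometry between two given sublattices, and you write out the orbit--stabilizer count that the paper leaves implicit.
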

\begin{pf}  Let $Q$ be a sublattice of ${\rm Leech}$ isomorphic to ${\rm Q}_g$.
By  \cite[Prop. B.2.2 (d)]{CheLan}, the lattice $Q^\perp$ satisfies ${\rm res}\, Q^\perp \,\simeq\,-{\rm res}\, Q$.
By Propositions \ref{coreg} and \ref{coregdoubl}, we have $Q^\perp \simeq {\rm Q}_{g'}$ with $g'=24-g$.
Moreover, the stabilizer of ${\rm Q}$ in ${\rm O}({\rm Leech})$ trivially coincides with that of $Q^\perp$.
To prove uniqueness we are thus left to show that there is a unique ${\rm
O}({\rm Q}_g) \times {\rm O}({\rm Q}_{g'})$-orbit of overlattices $L \supset
{\rm Q}_g \oplus {\rm Q}_{g'}$ with $L \simeq {\rm Leech}$.
Note that the existence of such an $L$ follows from Lemma \ref{lemmedefq}. \ps
Consider now an arbitrary maximal isotropic subspace $I$ in ${\rm res}\, {\rm Q}_g \oplus {\rm res}\, {\rm Q}_{g'}$
(which is a hyperbolic linking quadratic space over $\Fp$ with $p=5$ or $3$).
Let $L$ be the inverse image of $I$ in ${\rm Q}_g^\sharp \oplus {\rm
Q}_{g'}^\sharp$, an even unimodular lattice.
We assume furthermore that it has no root.
Then $L \cap ({\rm Q}_g^\sharp \oplus 0)$ is an even lattice without root containing ${\rm Q}_g \oplus 0$.
By Propositions \ref{coreg} and \ref{coregdoubl} it must be ${\rm Q}_g \oplus 0$, and similarly we have $L \cap (0 \oplus {\rm Q}_{g'}^\sharp)= 0 \oplus {\rm Q}_{g'}$.
It follows that both projections $I \rightarrow {\rm res}\, {\rm Q}_g$ and $I \rightarrow {\rm res}\, {\rm Q}_{g'}$ are injective, hence isomorphisms.
So there is an isometry $\varphi : {\rm res} \,{\rm Q}_g\, \isomo - {\rm res}\, {\rm Q}_{g'}$ such that we have $I = I_\varphi$,
with $I_\varphi\,=\, \{(x,\varphi(x)), x \in {\rm res}\, {\rm Q}_g\}$.
By Propositions \ref{prop:aut_Qg} and \ref{prop:doubling}, the map ${\rm O}({\rm Q}_{g'}) \rightarrow {\rm O}({\rm res}\, {\rm Q}_{g'})$ is surjective.
This shows that $1 \times {\rm O}({\rm Q}_{g'})$ permutes transitively the $I_\varphi$,
and that the stabilizer in this group of any $I_\varphi$ is the kernel of ${\rm O}({\rm Q}_{g'}) \rightarrow {\rm O}({\rm res} \,{\rm Q}_{g'})$, and we are done. \end{pf}

\noindent We have also proved above the following:

\begin{coro}\label{qgdansleech} Fix $g$ in $\{0, 8,12,16, 24\}$ and an isometric embedding of ${\rm Q}_g \oplus {\rm Q}_{g'}$ in ${\rm Leech}$, with $g'=24-g$.
The stabilizer $S$ of ${\rm Q}_g$ in ${\rm O}({\rm Leech})$ is ${\rm O}({\rm Leech}) \cap ({\rm O}({\rm Q}_g) \times {\rm O}({\rm Q}_{g'}))$ and the natural map $S \rightarrow {\rm O}({\rm Q}_g)$ is surjective with kernel $1 \times {\rm K}_{g'}$.
\end{coro}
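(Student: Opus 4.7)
The plan is to extract the corollary directly from the analysis performed inside the proof of Proposition \ref{coregdoubl}, where essentially all the work has already been done. My first task would be to justify the identification $S = {\rm O}({\rm Leech}) \cap ({\rm O}({\rm Q}_g) \times {\rm O}({\rm Q}_{g'}))$. The inclusion $\supset$ is automatic. For $\subset$, any $s \in S$ preserves ${\rm Q}_g$ and hence its orthogonal complement inside ${\rm Leech}$, which by Lemma \ref{lemmedefq} and Propositions \ref{coreg}, \ref{coregdoubl} must be (isomorphic to) ${\rm Q}_{g'}$; thus $s$ decomposes as a pair $(g_1,g_2) \in {\rm O}({\rm Q}_g) \times {\rm O}({\rm Q}_{g'})$ relative to ${\rm Leech} \otimes \Q = ({\rm Q}_g \otimes \Q) \oplus ({\rm Q}_{g'} \otimes \Q)$.

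For the second assertion, I would invoke the description of ${\rm Leech}$ as an overlattice of ${\rm Q}_g \oplus {\rm Q}_{g'}$ already used in the proof of Proposition \ref{coregdoubl}: namely, ${\rm Leech}/({\rm Q}_g \oplus {\rm Q}_{g'})$ is the graph $I_\varphi = \{(x,\varphi(x)) \, | \, x \in {\rm res}\, {\rm Q}_g\}$ associated to a unique isometry $\varphi : {\rm res}\, {\rm Q}_g \isomo -{\rm res}\, {\rm Q}_{g'}$. A pair $(g_1,g_2)$ in ${\rm O}({\rm Q}_g) \times {\rm O}({\rm Q}_{g'})$ preserves ${\rm Leech}$ iff it preserves $I_\varphi$, which translates to the intertwining condition
$$ \varphi \circ \overline{g_1} \,=\, \overline{g_2} \circ \varphi, $$
where $\overline{g_i}$ denotes the image of $g_i$ in ${\rm O}({\rm res}\, {\rm Q}_g)$ or ${\rm O}({\rm res}\, {\rm Q}_{g'})$. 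This reformulates $S$ as the fiber product of ${\rm O}({\rm Q}_g) \to {\rm O}({\rm res}\, {\rm Q}_g)$ and ${\rm O}({\rm Q}_{g'}) \to {\rm O}({\rm res}\, {\rm Q}_{g'})$ along the isomorphism induced by $\varphi$.

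Given this reformulation, the surjectivity of $S \to {\rm O}({\rm Q}_g)$ amounts to lifting the element $\varphi \circ \overline{g_1} \circ \varphi^{-1} \in {\rm O}({\rm res}\, {\rm Q}_{g'})$ to some $g_2 \in {\rm O}({\rm Q}_{g'})$. Such a lift exists because the natural map ${\rm O}({\rm Q}_{g'}) \to {\rm O}({\rm res}\, {\rm Q}_{g'})$ is surjective: this is Proposition \ref{prop:aut_Qg} when $g' \in \{6,8\}$ (a case not needed here but parallel) and more relevantly Proposition \ref{prop:aut_Qg} for $g' = 8$, Proposition \ref{prop:doubling} for $g' \in \{12,16\}$, while the extremal cases $g' \in \{0,24\}$ are trivial. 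Finally, the kernel of $S \to {\rm O}({\rm Q}_g)$ is obtained by setting $g_1 = 1$ in the intertwining condition, which forces $\overline{g_2} = 1$, so it is exactly $1 \times {\rm K}_{g'}$. No serious obstacle is expected; the main point is simply to package the graph description of ${\rm Leech}$ and the surjectivity results \ref{prop:aut_Qg} and \ref{prop:doubling} into the fiber-product statement.
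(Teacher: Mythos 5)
Your argument is correct and is essentially the paper's own: the corollary is extracted from the proof of the (unnumbered) proposition immediately preceding it, where ${\rm Leech}/({\rm Q}_g\oplus {\rm Q}_{g'})$ is identified with a graph $I_\varphi$, so that $S$ becomes the fiber product of ${\rm O}({\rm Q}_g)\to {\rm O}({\rm res}\,{\rm Q}_g)$ and ${\rm O}({\rm Q}_{g'})\to {\rm O}({\rm res}\,{\rm Q}_{g'})$, and surjectivity plus the kernel computation follow from Propositions \ref{prop:aut_Qg} and \ref{prop:doubling}. The only slip is attributing the graph description to the proof of Proposition \ref{coregdoubl} rather than to that of the orbit-counting proposition just before the corollary; the mathematics is unaffected.
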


\begin{prop}
  The lattice ${\rm Q}_{16}$ is orientable.
\end{prop}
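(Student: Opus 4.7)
The plan is to bootstrap orientability of ${\rm Q}_{16}$ from the orientability of $\Leech$ (Conway, recalled in the introduction) and of ${\rm Q}_8$ (Proposition \ref{q8orientable}), via the fact that ${\rm Q}_{16}$ and ${\rm Q}_8$ occur as mutually orthogonal complements inside $\Leech$. Concretely, I would first apply Corollary \ref{qgdansleech} with $g=16$ and $g'=8$, fixing an isometric embedding ${\rm Q}_{16} \oplus {\rm Q}_{8} \hookrightarrow \Leech$. The corollary identifies the stabilizer $S$ of ${\rm Q}_{16}$ in $\O(\Leech)$ with a subgroup of $\O({\rm Q}_{16}) \times \O({\rm Q}_8)$, and asserts that restriction $S \to \O({\rm Q}_{16})$ is surjective with kernel $1 \times {\rm K}_8$. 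Since ${\rm K}_8 = 1$ by Proposition \ref{prop:aut_Qg}, this restriction is an \emph{isomorphism}: every $g \in \O({\rm Q}_{16})$ admits a unique lift $\tilde g = (g,g'') \in S$ with $g'' \in \O({\rm Q}_8)$.

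Given any $g \in \O({\rm Q}_{16})$, I would then take determinants of its lift $\tilde g$. Since $\tilde g$ preserves the orthogonal decomposition $\Leech \otimes \Q = ({\rm Q}_{16} \otimes \Q) \oplus ({\rm Q}_{8} \otimes \Q)$, it acts block-diagonally on a $\Z$-basis of the full-rank sublattice ${\rm Q}_{16} \oplus {\rm Q}_{8} \subset \Leech$, so $\det \tilde g = \det g \cdot \det g''$ (the determinant being the same whether computed on $\Leech$ or on this finite-index sublattice). Conway's theorem gives $\det \tilde g = 1$, and Proposition \ref{q8orientable} gives $\det g'' = 1$. Therefore $\det g = 1$, proving ${\rm Q}_{16}$ is orientable.

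There is essentially no obstacle here: the real work was already done in Corollary \ref{qgdansleech} and in the verification ${\rm K}_8 = 1$ of Proposition \ref{prop:aut_Qg}. The cleanest aspect of this approach is that, unlike the proofs for ${\rm Q}_{8}$ and ${\rm Q}_{12}$, one does not need to exhibit explicit reflections with prescribed spinor norms, nor invoke Eichler's theorem — the two ingredients (orientability of $\Leech$ and ${\rm Q}_8$) together force orientability of ${\rm Q}_{16}$ formally.
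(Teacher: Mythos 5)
Your proposal is correct and is essentially identical to the paper's own proof: both fix an embedding ${\rm Q}_8 \oplus {\rm Q}_{16} \hookrightarrow {\rm Leech}$, use Corollary \ref{qgdansleech} to lift any $\gamma \in {\rm O}({\rm Q}_{16})$ to an element of ${\rm O}({\rm Leech})$ acting block-diagonally, and conclude from $\det = 1$ on ${\rm Leech}$ together with the orientability of ${\rm Q}_8$. The only (harmless) extra remark you make is that ${\rm K}_8 = 1$ upgrades the surjection $S \to {\rm O}({\rm Q}_{16})$ to an isomorphism; the paper only needs the existence of a lift, not its uniqueness.
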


\begin{proof}
  Fix an isometric embedding of ${\rm Q}_8 \oplus {\rm Q}_{16}$ in ${\rm
  Leech}$.
  By Corollary \ref{qgdansleech}, for any $\gamma$ in ${\rm O}({\rm Q}_{16})$
  there is $\gamma'$ in ${\rm O}({\rm Q}_8)$ such that $\gamma \oplus \gamma'$
  is in ${\rm O}({\rm Leech})$.
  As any element of ${\rm O}({\rm Leech})$ has determinant $1$ we have $\det
  \gamma \det \gamma'=1$.
  But we have $\det \gamma'=1$ as ${\rm Q}_8$ is orientable, hence  $\det \gamma
  =1$.
\end{proof}

We have used several times the following simple lemma.

\begin{lemm} \label{lemm:sub_sup_latt_isot} Let $L$ be an even lattice. \begin{itemize}
\item[(i)] The map $M \mapsto M/L$ defines a bijection between the set of even lattices $M$ in $L \otimes \Q$ containing $L$ and the set of totally isotropic subgroups $I \subset {\rm res}\, L$ $($that is, with ${\rm q}(I)=0)$. In this bijection, we have ${\rm res}\, M \simeq I^\perp/I$. If furthermore $I$ is a direct summand of the abelian group ${\rm res}\, L$, and if $|I|$ is odd, then we have a noncanonical isomorphism ${\rm res}\, L \simeq {\rm H}(I) \oplus {\rm res}\, M$.\ps
\item[(ii)] Let $h$ be an odd integer $\geq 1$ and  $x \in L$ with $x \cdot x \equiv 0 \bmod h$. Assume that the natural map $L \rightarrow \Z/h\Z, y \mapsto y \cdot x \bmod h$ is surjective, and denote by $M$ its kernel. Then $M$ is an even lattice with $L/M \simeq \Z/h\Z$ and ${\rm res}\, M \simeq {\rm res}\, L \oplus {\rm H}(\Z/h\Z)$. \ps
\end{itemize}
\end{lemm}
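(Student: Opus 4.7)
The plan is to establish (i) first and then deduce (ii) via an explicit hyperbolic subspace.

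For the bijection in (i): any even overlattice $M \supset L$ in $L \otimes \Q$ lies in $L^\sharp$ by integrality of its bilinear form, and $I := M/L \subset \res L$ is isotropic because $m \in M$ satisfies $q(m) \in \Z$, i.e., $q(\bar m) = 0$ in $\Q/\Z$; conversely, the preimage in $L^\sharp$ of any isotropic $I \subset \res L$ is an even lattice. To compute $\res M$: since $L \subset M$ gives $M^\sharp \subset L^\sharp$, and for $x \in L^\sharp$ the condition $x \cdot M \subset \Z$ rephrases via the pairing on $\res L$ as $b(\bar x, I) = 0$, we obtain $M^\sharp = \pi^{-1}(I^\perp)$ where $\pi: L^\sharp \to \res L$, and hence $\res M = I^\perp/I$.

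For the hyperbolic splitting, fix a group decomposition $A = I \oplus K$ of $A := \res L$ and set $J := K \cap I^\perp$. The projection $I^\perp \to K$ has kernel $I$ and image $J$, so $I^\perp = I \oplus J$; furthermore $J \hookrightarrow I^\perp \twoheadrightarrow I^\perp/I = \res M$ is an isometry (as $J \subset A$ inherits the same form), hence $J$ is nondegenerate and $A = J \oplus J^\perp$ orthogonally with $|J^\perp| = |I|^2$. The subgroup $J^\perp$ contains $I$ as a Lagrangian, and $I$ remains a direct summand of $J^\perp$, so one may write $J^\perp = I \oplus K''$ with $K'' := K \cap J^\perp$. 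The perfect pairing $K'' \times I \to \Q/\Z$ ensures that for each $k_2 \in K''$ the linear form $k_1 \mapsto -\tfrac{1}{2} b(k_1, k_2)$ equals $k_1 \mapsto b(k_1, \phi(k_2))$ for a unique $\phi(k_2) \in I$, defining $\phi \in \Hom(K'', I)$; the factor $\tfrac{1}{2}$ makes sense because values of $b$ on $K''$ lie in an odd-order subgroup of $\Q/\Z$. Then $\{k + \phi(k) : k \in K''\}$ is an isotropic complement to $I$ in $J^\perp$, identifying $J^\perp$ with ${\rm H}(I)$ and yielding $A \simeq {\rm H}(I) \oplus \res M$.

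For (ii), apply (i) to the inclusion $M \subset L$: the quotient $I := L/M$ is cyclic of order $h$ by the surjectivity hypothesis, isotropic in $\res M$ since $L$ is even, and (i) gives $\res L \simeq I^\perp/I$. Rather than verifying directly that $I$ is a direct summand of $\res M$, I construct an explicit hyperbolic subspace $N \subset \res M$ containing $I$. Pick $x' \in L$ with $x' \cdot x \equiv 1 \bmod h$; then $\bar x'$ generates $I$. The element $x/h \in L \otimes \Q$ lies in $M^\sharp$ by the defining property of $M$. Writing $x = d x_0$ with $x_0$ primitive in $L$, the surjectivity of $y \mapsto y \cdot x \bmod h$ forces $\gcd(d,h) = 1$, from which a short computation shows $\bar{x/h}$ has order exactly $h$ in $\res M$. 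Setting $x \cdot x = hm$, the subgroup $N := \langle \bar x', \bar{x/h}\rangle$ is isomorphic to $(\Z/h\Z)^2$ with bilinear Gram matrix $\bigl(\begin{smallmatrix} 0 & 1/h \\ 1/h & m/h \end{smallmatrix}\bigr)$, which is nondegenerate.

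To identify $N$ with ${\rm H}(\Z/h\Z)$, set $\bar y := -m \bar x' + 2 \bar{x/h}$: a direct computation using $q(\bar x') = 0$, $q(\bar{x/h}) = m/(2h)$ and $b(\bar x', \bar{x/h}) = 1/h$ gives $q(\bar y) = 0$ and $b(\bar x', \bar y) = 2/h$, and $\bar y$ has order $h$ since $\gcd(h,2)=1$. So $\bar x'$ and $\bar y$ are two isotropic generators of $N$ with nondegenerate pairing $2/h$, whence $N \simeq {\rm H}(\Z/h\Z)$ (after rescaling $\bar y$ by $2^{-1} \bmod h$). Nondegeneracy of $N$ gives $\res M = N \oplus N^\perp$; the composite $N^\perp \hookrightarrow I^\perp \twoheadrightarrow I^\perp/I \simeq \res L$ is injective (since $N \cap N^\perp = 0$ forces $N^\perp \cap I \subset N^\perp \cap N = 0$) and surjective by the order count $|N^\perp| = |\res M|/h^2 = |\res L|$, hence an isometry. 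The crux of both parts is to invert $2$ — in (i) to define $\phi$, in (ii) to produce an isotropic generator of $N$ transverse to $I$ — which the odd-order hypothesis permits.
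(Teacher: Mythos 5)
Your proof is correct. For part (i) you follow essentially the paper's route: choose a complement $J$ of $I$ inside $I^\perp$ (you make the choice explicit via a group splitting $\res L = I \oplus K$, which the paper leaves implicit), observe that $J$ is nondegenerate and isometric to $I^\perp/I \simeq \res M$, and then split the Lagrangian $I$ off $J^\perp$ by correcting a complement with the homomorphism $\phi$ defined through ``division by $2$'' --- exactly the paper's use of the odd-order hypothesis. Part (ii) is where you genuinely diverge. The paper's argument is a two-line reduction to (i): the homomorphism $M^\sharp \rightarrow \Z/h\Z$, $y \mapsto y \cdot x \bmod h$, is well defined because $x \in M$, kills $M$, and restricts to an isomorphism on $L/M$; hence $L/M$ is a direct summand of $\res M$ and the last assertion of (i) applies verbatim. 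You instead exhibit an explicit hyperbolic plane $N = \langle \bar x', \overline{x/h} \rangle \subset \res M$ containing $I = L/M$, check nondegeneracy from the Gram matrix, produce a second isotropic generator $\bar y = -m\bar x' + 2\overline{x/h}$ (again inverting $2$), and identify $N^\perp$ with $\res L$ by an order count. This is heavier but entirely sound, and it has the small advantage of producing the splitting explicitly rather than through the abstract direct-summand criterion; note that the detour through primitivity of $x$ and $\gcd(d,h)=1$ is not strictly needed, since your pairing computation already shows the natural map $(\Z/h\Z)^2 \rightarrow N$ is injective. Both approaches use the odd-order hypothesis in the same essential way, namely to invert $2$.
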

\begin{pf} The first two assertions in (i) are obvious \cite[Prop. 2.1.1]{CheLan}. For the last assertion of (i) choose first a subgroup $J$ of $I^\perp$ with $I^\perp = J \oplus I$. Then $J$ is nondegenerate in ${\rm res}\, L$, $I$ is a totally isotropic direct summand of $V:=J^\perp$, and we have an exact sequence $0 \rightarrow I \rightarrow V \rightarrow {\rm Hom}(I,\Q/\Z) \rightarrow 0$. We now argue as in the proof of Proposition 2.1.2 of \cite{CheLan} (beware however that the statement loc. cit. does not hold for linking quadratic spaces of even cardinality). Choose a supplement $I'$ of $I$ in $V$, {\it i.e.} $V= I \oplus I'$. As $V$ is nondegenerate, any bilinear form on $I'$ is of the form $(x,y) \mapsto x \cdot \varphi(y)$ for some morphism $\varphi : I' \rightarrow I$. We apply this to the form $(x,y) \mapsto \frac{1}{2} x \cdot y$, which is well defined as $|V|$ is odd. Then the subgroup $\{x-\varphi(x), x \in I'\}$ is a totally isotropic supplement of $I$ in $V$. This implies $V \simeq {\rm H}(I)$ (see Proposition-Definition 2.1.3 {\it loc. cit.}). \par
For assertion (ii), consider the natural map $M^\sharp \rightarrow \Z/h\Z, y \mapsto y \cdot x \bmod h$. This is well defined as we have $x \in M$ by assumtion, and its restriction to $L$ induces an isomorphism $L/M \isomo \Z/h\Z$. So $L/M$ is a direct summand of ${\rm res}\, M$ and we conclude the proof by (i).
\end{pf}


\section{Standard ${\rm L}$-functions of the eigenforms ${\rm F}_g$}\label{complements}

In this section, we show that the Siegel modular forms ${\rm F}_g$ defined in \eqref{formfg} are eigenforms and give an expression for their standard ${\rm L}$-functions. \ps

\begin{prop}\label{Qrootlatticeinvform} Let $L$ be an integral lattice whose roots generate $L \otimes \R$. For any $g\geq 1$, there is no nonzero, ${\rm O}(L)$-invariant, alternating $g$-form on $L$. \end{prop}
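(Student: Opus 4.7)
The plan is to exploit the reflections $s_\alpha \in \mathrm{O}(L)$ associated to the roots $\alpha$ of $L$, and show that $\mathrm{O}(L)$-invariance forces $\omega$ to vanish whenever one of its entries is a root; since the roots span $L \otimes \R$, multilinearity then gives $\omega = 0$.

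More concretely, fix a root $\alpha$ of $L$ and consider the orthogonal decomposition $L \otimes \R = \R\alpha \oplus \alpha^\perp$. Since $\alpha \cdot \alpha = 2$, the reflection $s_\alpha(x) = x - (\alpha \cdot x)\alpha$ lies in $\mathrm{O}(L)$ and acts as $-1$ on $\R\alpha$ and as the identity on $\alpha^\perp$. Accordingly, any alternating $g$-form $\omega$ on $L \otimes \R$ decomposes as $\omega = \omega_0 + \alpha^* \wedge \omega_1$, where $\omega_0 \in \Lambda^g (\alpha^\perp)^*$, $\omega_1 \in \Lambda^{g-1}(\alpha^\perp)^*$, and $\alpha^* \in (L \otimes \R)^*$ denotes the linear form vanishing on $\alpha^\perp$ with $\alpha^*(\alpha) = 1$. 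The element $s_\alpha$ acts trivially on $\omega_0$ and by $-1$ on $\alpha^* \wedge \omega_1$, so $s_\alpha$-invariance of $\omega$ forces $\alpha^* \wedge \omega_1 = 0$, i.e.\ $\omega(\alpha, v_2, \dots, v_g) = 0$ for all $v_2, \dots, v_g \in L$.

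Assuming now that $\omega$ is $\mathrm{O}(L)$-invariant, the previous observation holds for every root $\alpha$ of $L$. By hypothesis the roots of $L$ generate $L \otimes \R$, so any $v_1 \in L$ is an $\R$-linear combination of roots; multilinearity of $\omega$ in the first argument then gives $\omega(v_1, v_2, \dots, v_g) = 0$ for all $v_i$, hence $\omega = 0$.

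There is no real obstacle here: the only thing to be a little careful about is the bookkeeping of the decomposition $\omega = \omega_0 + \alpha^* \wedge \omega_1$ and the sign of the $s_\alpha$-action on each piece; once that is correctly set up, the conclusion is immediate from the spanning hypothesis on the roots.
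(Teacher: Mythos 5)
Your proof is correct and uses essentially the same idea as the paper: the reflection $s_\alpha$ in a root $\alpha$ lies in ${\rm O}(L)$ and negates exactly the part of $\omega$ involving $\alpha$, so invariance forces $\omega$ to vanish whenever an entry is a root, and the spanning hypothesis together with multilinearity finishes the argument. The paper phrases this by reducing to tuples consisting entirely of roots and computing $\omega(s(x_1),\dots,s(x_g))=-\omega(x_1,\dots,x_g)$ directly, rather than via your eigenspace decomposition of $\Lambda^g(L\otimes\R)^*$, but the two are the same argument.
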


\begin{proof}
  Let $\omega : L^g \rightarrow \R$ be such a form.
  It is enough to show $\omega(x_1,\dots,x_g)=0$ for any $x_1,\dots,x_g$ in $L$,
  with $x_i$ roots of $L$.
  Fix such $x_i$ and let $s$ be the reflection associated to the root $x_1$.
  We have $s \in {\rm O}(L)$ as $L$ is integral,  $s(x_1)=-x_1$ and $s(x_i)
  \,\in \, x_i \,+ \,\Z \,x_1$ for all $i$, hence the following equalities
  \[ \omega(x_1,x_2,\dots,x_g)=\omega(s(x_1),s(x_2),\dots,s(x_g)) = -
  \omega(x_1,x_2,\dots,x_g) =0. \]
\end{proof}

Fix an integer $n \equiv 0 \bmod 8$ and consider the set $\mathcal{L}_n$ of even unimodular lattices in the standard Euclidean space $V=\R^n$.
For all $g\geq 1$ we denote by ${\rm Alt}_n^g$ the free $\R$-vector space with generators the $(L,\omega)$,
with $L$ in $\mathcal{L}_n$ and $\omega$ an alternating $g$-form on $V$,
and with relations the $$(\gamma^{-1}(L), \omega \circ \gamma) = (L,\omega) \, \, \, \,\text{and}\,\,\,\, (L, \lambda \,\omega + \omega') \,=\,\lambda \,(L,\omega)\, +\, (L,\omega'),$$
for all $L$ in $\mathcal{L}_n$, all $\gamma$ in ${\rm O}(V)$, all alternating $g$-forms $\omega,\omega'$ on $V$ and all $\lambda$ in $\R$.
It follows readily from these definitions that the Siegel theta series construction $(L;\omega) \mapsto \Theta(L;\omega) = \sum_{\underline{v} \in L^g} \omega(\underline{v}) q^{\frac{\underline{v}\cdot \underline{v}}{2}}$ factors through an $\R$-linear map
\begin{equation} \label{thethaalt} \Theta : {\rm Alt}_n^g \longrightarrow {\rm S}_{n/2+1}({\rm Sp}_{2g}(\Z)).\end{equation}
If $L_1,\dots,L_h$ denote representatives for the isometry classes of even unimodular lattices in $V$,
we also have an $\R$-linear isomorphism
\begin{equation} \label{dimaltng} {\rm Alt}_n^g \simeq \bigoplus_{i=1}^h (\Lambda^g V^\ast)^{{\rm O}(L_i)}.\end{equation}
The classification of even unimodular lattices in rank $\leq 24$ (or simply, Venkov's argument in \cite[Chap. 18, \S 2, Prop. 1]{splag}) shows that appart from ${\rm Leech}$ these lattices are generated over $\Q$ by their roots.
Proposition \ref{Qrootlatticeinvform} and Formula \eqref{dimaltng} thus show that ${\rm Alt}_n^g$ vanishes for $n<24$, and together with \eqref{invaltleech}, imply:

\begin{prop} ${\rm Alt}_{24}^g$ has dimension $1$ for $g$ in $\{8,12,16,24\}$, $0$ otherwise.
\end{prop}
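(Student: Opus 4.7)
The plan is to compute $\dim \mathrm{Alt}_{24}^g$ via the isomorphism \eqref{dimaltng}, showing that only the Leech lattice contributes to the sum, and then reading off the dimensions of the remaining invariant spaces from \eqref{invaltleech}.

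First, I would invoke the Niemeier classification recalled at the very beginning of the introduction: the isometry classes of even unimodular lattices in $V = \R^{24}$ consist of $\mathrm{Leech}$ and $23$ other lattices (the \emph{Niemeier lattices}), each of which is generated over $\R$ by its roots. For every Niemeier lattice $L_i \neq \mathrm{Leech}$, Proposition \ref{Qrootlatticeinvform} applies and yields $(\Lambda^g V^\ast)^{\mathrm{O}(L_i)} = 0$ for every $g \geq 1$. Consequently, in the decomposition \eqref{dimaltng}, only the term corresponding to $\mathrm{Leech}$ survives, and we obtain a canonical $\R$-linear isomorphism
\[ \mathrm{Alt}_{24}^g \,\simeq\, (\Lambda^g V^\ast)^{\mathrm{O}(\mathrm{Leech})}. \]

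It remains to compute the dimension of the right-hand side. By the standard character-theoretic formula for invariants, combined with the fact that for any $\gamma \in \mathrm{O}(V)$ one has $\det(t - \gamma) = \sum_{g=0}^{24} (-1)^g \mathrm{tr}(\Lambda^g \gamma)\, t^{24-g}$, the dimension of the space of $\mathrm{O}(\mathrm{Leech})$-invariants in $\Lambda^g V^\ast \simeq \Lambda^g V$ is (up to the sign $(-1)^g$) the coefficient of $t^{24-g}$ in the averaged characteristic polynomial displayed in \eqref{invaltleech}, namely $t^{24} + t^{16} + t^{12} + t^8 + 1$. Since the nonzero coefficients occur at degrees $24, 16, 12, 8, 0$, corresponding to $g = 0, 8, 12, 16, 24$ (all even, so the sign $(-1)^g$ is trivial), we conclude that $(\Lambda^g V^\ast)^{\mathrm{O}(\mathrm{Leech})}$ has dimension $1$ precisely for $g \in \{0, 8, 12, 16, 24\}$, and $0$ otherwise.

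Combining the two steps and restricting to $g \geq 1$ (as in the definition of $\mathrm{Alt}_{24}^g$) proves the proposition. The only non-routine ingredient is the input from the Niemeier classification via Proposition \ref{Qrootlatticeinvform}; everything else is a bookkeeping exercise with character theory and \eqref{invaltleech}. There is no real obstacle, as both ingredients have been set up in the preceding sections.
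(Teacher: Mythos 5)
Your proof is correct and follows essentially the same route as the paper: the classification of rank-$24$ even unimodular lattices (all generated by roots except $\mathrm{Leech}$), Proposition \ref{Qrootlatticeinvform} to kill every non-Leech summand in \eqref{dimaltng}, and the averaged characteristic polynomial \eqref{invaltleech} to read off the remaining dimensions. The only cosmetic difference is that the paper also points to Venkov's argument as a way to avoid the full Niemeier classification, but the substance is identical.
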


Let us denote by ${\rm O}_n$ the orthogonal group scheme of a fixed even unimodular lattice of rank $n$,
{\it e.g.} of ${\rm D}_n \,+\, \Z \,e$ with $e\,= \,\frac{1}{2}(1,\dots,1)$. For any finite dimensional representation $U$ of ${\rm O}(V)$, the space\footnote{We have a similar definition with ${\rm O}$ replaced by ${\rm SO}$ that we will also use below.} ${\rm M}_U({\rm O}_n)$ of ${\rm O}(V)$-equivariant functions $\mathcal{L}_n \rightarrow U$, is the space of level $1$ automorphic forms of ${\rm O}_n$ with coefficients in $U$  \cite[\S 4.4.4]{CheLan}. As such it is equipped with an action of the (commutative) Hecke ring ${\rm H}({\rm O}_n)$ of ${\rm O}_n$ \cite[\S 4.2.5 \& \S 4.2.6]{CheLan}. For all $g\geq 1$, the space ${\rm Alt}_n^g$ is canonically isomorphic to the dual of ${\rm M}_{\Lambda^g V}({\rm O}_n)$, hence carries an ${\rm H}({\rm O}_n)$-action as well. As an example, for any prime $p$ the Kneser $p$-neighbor operator is the endomorphism of ${\rm Alt}_n^g$ sending $(L,\omega)$ to the sum of $(L',\omega)$ over the $L'$ in $\mathcal{L}_n$ with $L \cap L'$ of covolume $p$. The so-called {\it Eichler commutations relations} imply that the map $\Theta$ in \eqref{thethaalt} sends an ${\rm H}({\rm O}_n)$-eigenvector on the left-hand side either to $0$ or to a Siegel eigenform on the right-hand side (i.e. an ${\rm H}({\rm Sp}_{2g})$-eigenvector): see \cite{Freitag_harm_theta}, as well as \cite{Rallis_Eichler} for an interpretation in terms of Satake parameters. \ps

For $g=8,12,16,24$, the space ${\rm Alt}_{24}^g$ has dimension $1$, so it is generated by an ${\rm H}({\rm O}_{24})$-eigenvector.
Our main theorem asserts that the image of ${\rm Alt}_{24}^g$ under $\Theta$ is generated by ${\rm F}_g$ and is nonzero.
We have proved:

\begin{coro} For $g=8,12,16,24$, the Siegel modular form ${\rm F}_g$ is an eigenform. \end{coro}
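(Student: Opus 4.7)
The plan is to combine three ingredients already assembled in the paper: the one-dimensionality of ${\rm Alt}_{24}^g$, the Hecke-equivariance of the Siegel theta construction, and the non-vanishing of ${\rm F}_g$ provided by Theorem \ref{mainthm}. First, I would observe that the Hecke algebra ${\rm H}({\rm O}_{24})$ acts on ${\rm M}_{\Lambda^g V}({\rm O}_{24})$ and hence on its dual ${\rm Alt}_{24}^g$. By the preceding proposition, ${\rm Alt}_{24}^g$ is one-dimensional for $g \in \{8,12,16,24\}$. Since ${\rm H}({\rm O}_{24})$ is commutative, any nonzero vector in a one-dimensional stable subspace is automatically a simultaneous eigenvector: so any generator $\xi_g$ of ${\rm Alt}_{24}^g$ is an ${\rm H}({\rm O}_{24})$-eigenvector.

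Next, I would invoke the Eichler commutation relations in the harmonic setting, as formulated by Freitag \cite{Freitag_harm_theta} and interpreted in terms of Satake parameters by Rallis \cite{Rallis_Eichler}. These say that the theta map
\[ \Theta : {\rm Alt}_{24}^g \longrightarrow {\rm S}_{13}({\rm Sp}_{2g}(\Z)) \]
intertwines the ${\rm H}({\rm O}_{24})$-action on the source with the ${\rm H}({\rm Sp}_{2g})$-action on the target, in the sense that the image of an ${\rm H}({\rm O}_{24})$-eigenvector is either zero or an ${\rm H}({\rm Sp}_{2g})$-eigenform (with eigenvalues determined by a fixed functorial transfer of Satake parameters). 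Applying this to $\xi_g$, the form $\Theta(\xi_g)$ is either zero or a Siegel eigenform.

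Finally, I would note that Theorem \ref{mainthm} exhibits an explicit nonzero Fourier coefficient of ${\rm F}_g$, namely $a_{{\rm Q}_g}({\rm F}_g)=\pm \, {\rm n}_g\, {\rm e}_g \neq 0$, so ${\rm F}_g \neq 0$. Since ${\rm F}_g$ lies in the image of $\Theta$ and that image is the line generated by $\Theta(\xi_g)$ (the source being one-dimensional), the form ${\rm F}_g$ is a nonzero scalar multiple of $\Theta(\xi_g)$, and is therefore an ${\rm H}({\rm Sp}_{2g})$-eigenform.

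The only non-routine input is the Hecke-equivariance of $\Theta$ on harmonic Siegel theta series, which is exactly the content of the Eichler commutation relations cited from \cite{Freitag_harm_theta, Rallis_Eichler}; the rest is bookkeeping around the one-dimensionality and the nonvanishing. No new obstacle arises beyond what has already been settled, so I do not expect any substantive difficulty at this stage.
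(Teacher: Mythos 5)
Your proposal is correct and follows exactly the paper's own argument: one-dimensionality of ${\rm Alt}_{24}^g$ forces a generator to be an ${\rm H}({\rm O}_{24})$-eigenvector, the Eichler commutation relations (Freitag, Rallis) make $\Theta$ send it to zero or an eigenform, and Theorem \ref{mainthm} rules out the zero case. Nothing to add.
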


We now discuss the standard ${\rm L}$-functions of the eigenforms ${\rm F}_g$, or more precisely, their collections of Satake parameters. We need some preliminary remarks and notations mostly borrowed from \cite[\S 6.4]{CheLan}.\ps

For any integer $n\geq 1$ we denote by $\mathcal{X}_n$ the set of sequences $c=(c_2,\dots,c_p,\dots,c_\infty)$, where the $c_p$ are semisimple conjugacy classes in ${\rm GL}_n(\C)$ indexed by the primes $p$, and where $c_\infty$ is a semisimple conjugacy class in ${\rm M}_n(\C)$. The direct sum and tensor product induce componentwise two natural operations $\mathcal{X}_n \times \mathcal{X}_m \rightarrow \mathcal{X}_{n+m}$ and $\mathcal{X}_n \times \mathcal{X}_m \rightarrow \mathcal{X}_{nm}$, denoted respectively $(c,c') \mapsto c \oplus c'$ and $(c,c') \mapsto c\,c'$. An important role will be played by the element $[n]$ of $\mathcal{X}_n$ such that $[n]_p$ (resp. $[n]_\infty$) has the eigenvalues $p^{\frac{n-1}{2}-i}$ (resp. $\frac{n-1}{2}-i$) for $i=0,\dots,n-1$. \ps

Any Siegel eigenform $F$ for ${\rm Sp}_{2g}(\Z)$ has an associated collection of Satake parameters, semisimple conjugacy classes in ${\rm SO}_{2g+1}(\C)$ indexed by the primes, as well as an infinitesimal character (as defined by Harish-Chandra), which may be viewed as a semisimple conjugacy class in the Lie algebra of ${\rm SO}_{2g+1}(\C)$. So $F$ gives rise to an element in $\mathcal{X}_{2g+1}$ using the natural (or ``standard'') representation ${\rm SO}_{2g+1}(\C) \rightarrow {\rm GL}_{2g+1}(\C)$. This element is called the {\it standard parameter} of $F$. Similarly, any ${\rm H}({\rm O}_n)$-eigenvector in ${\rm M}_U({\rm O}_n)$ or ${\rm M}_U({\rm SO}_n)$ gives rise to an element of $\mathcal{X}_{n}$: see e.g. \cite[Sch. 6.2.4 \& Def. 6.4.9]{CheLan}. \ps

For $g=8,12,16,24$ we denote by $\psi_g$ the standard parameter of the eigenform ${\rm F}_g$, and by $\psi'_g$ that of a generator of ${\rm M}_{\Lambda^g V}({\rm O}_{24}) = ({\rm Alt}_{24}^g)^\ast$.  By definition, $\psi_g$ is in $\mathcal{X}_{2g+1}$ and $\psi'_g$ is in $\mathcal{X}_{24}$. Using Theorem \ref{mainthm}, Rallis's aforementioned theorem asserts
\begin{equation} \label{rallis} \psi'_8 = \psi_8 \oplus [7] \, \, \, \, \, \text{and}\,\,\,\,\,\psi_g = \psi'_g \oplus [2g-23]\,\,\, \text{for}\,\,\, g \geq 12.\end{equation}
In the spirit of standard conjectures by Langlands and Arthur (see. \cite[\S 6.4.4]{CheLan}),
we will express those $\psi_g$ and $\psi'_g$ in terms of Satake parameters of certain cuspidal automorphic eigenforms for ${\rm GL}_m(\Z)$.
The four following forms will play a role:\ps

-- For $w=11,17$, we denote by $\Delta_w \in \mathcal{X}_2$ the collection of the Satake parameters, and of the infinitesimal character,
of the classical modular normalized eigenform of weight $w+1$ for ${\rm PGL}_2(\Z)$. For example, the $p$-th component of $\Delta_{11}$ has determinant $1$ and trace $\tau(p)/p^{11/2}$. The eigenvalues of $(\Delta_w)_\infty$ are $\pm \frac{w}{2}$. \ps

-- For $(w,v)=(19,7)$ and $(21,13)$, and following  \cite[\S 9.1.3]{CheLan}, there is a unique (up to scalar) cuspidal eigenform for ${\rm PGL}_4(\Z)$
whose infinitesimal character has the eigenvalues $\pm w/2,\pm v/2$:
we denote by $\Delta_{w,v}\in \mathcal{X}_4$ the collection of its Satake parameters, and of this infinitesimal character.
As explained {\it loc. cit.},  they are also the spinor parameters of generators of the $1$-dimensional space of Siegel modular forms for ${\rm Sp}_4(\Z)$
with coefficients in the representations ${\rm Sym}^6 \otimes \det^8$ and ${\rm Sym}^{12} \otimes \det^6$ of ${\rm GL}_2(\C)$ respectively.
See \cite[Tables C3 \& C.4]{CheLan} and \cite{bfgwebsite} for more information
on these Satake parameters. \ps

\begin{theo} \label{standardparamFg} The parameters $\psi_g$ and $\psi'_g$ are given by the following table:
\vspace{-.5cm}
\begin{table}[htp]
{\tiny \renewcommand{\arraystretch}{1.8} \medskip
\begin{center}
\begin{tabular}{c||c|c|c|c}
$g$ & $8$ & $12$ & $16$ & $24$ \\
\hline
$\psi_g$ & $\Delta_{21,13}[4] \oplus [1]$ & $\Delta_{19,7}[6] \oplus [1]$ & $\Delta_{17}[8] \oplus [9] \oplus [7] \oplus [1]$ & $\Delta_{11}[12] \oplus [25]$ \\
\hline
$\psi'_g$ &  $\Delta_{21,13}[4] \oplus [7] \oplus [1]$ & $ \Delta_{19,7}[6]$ & $\Delta_{17}[8] \oplus [7] \oplus [1]$ & $ \Delta_{11}[12]$ \\
\end{tabular}
\end{center}}
\end{table}
\end{theo}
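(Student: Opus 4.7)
My plan is to work on the orthogonal side via the map $\Theta$ of \eqref{thethaalt}. Since ${\rm Alt}_{24}^g$ is one-dimensional for $g \in \{8, 12, 16, 24\}$ by the proposition preceding the theorem, the dual space ${\rm M}_{\Lambda^g V}({\rm O}_{24})$ carries a unique (up to scalar) Hecke eigensystem whose standard parameter is $\psi'_g \in \mathcal{X}_{24}$. By Rallis's formula \eqref{rallis}, identifying $\psi'_g$ determines $\psi_g$, and conversely. The task is then to match the eigenline ${\rm Alt}_{24}^g$, equivalently the line spanned by ${\rm F}_g$, with one of the Arthur parameters enumerated in \cite{CheTai}.

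The main tool will be Arthur's classification \cite{Arthur_book} together with the multiplicity formula for the compact inner form ${\rm O}_{24}$ of ${\rm SO}_{24}$ proved in \cite{TaiMult}. A candidate Arthur parameter is a formal sum $\pi_1[d_1] \oplus \cdots \oplus \pi_r[d_r]$ in which each $\pi_i$ is a self-dual cuspidal automorphic representation of ${\rm GL}_{n_i}(\mathbb{A})$ of level $1$, $\sum_i n_i d_i = 24$, and the global infinitesimal character matches that of $\psi'_g$. This latter character is determined by the highest weight of the ${\rm O}(V)$-representation $\Lambda^g V$ together with the half-sum of positive roots of ${\rm SO}(24)$, and an explicit computation yields the eigenvalue multisets visible in the table. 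The classification of self-dual level-$1$ cuspidal automorphic representations of small motivic weight, as treated in \cite{CheLan}, then limits the possible summands of $\psi'_g$ to the trivial character (contributing factors of the form $[d]$) and the four representations $\Delta_{11}$, $\Delta_{17}$, $\Delta_{19,7}$, $\Delta_{21,13}$ introduced above the theorem.

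For each remaining candidate $\psi$, Ta\"ibi's multiplicity formula expresses its multiplicity in ${\rm M}_{\Lambda^g V}({\rm O}_{24})$ as a product of a global sign with a local character, the archimedean factor being computed from the explicit description of archimedean Arthur packets in \cite{AMR}. Summing these non-negative integers over the short list and matching the total against $\dim {\rm M}_{\Lambda^g V}({\rm O}_{24}) = 1$ pins down $\psi'_g$. The main obstacle is the non-tempered cases $g = 16, 24$, where several candidate parameters share the correct infinitesimal character and involve components $[d]$ with $d > 1$; in \cite{CheTai} the final identification at this point relied on the scalar-weight archimedean packet computations of \cite{MR_scalar}. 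The key observation that allows us to sidestep that reference here is Theorem \ref{mainthm}: since ${\rm F}_g$ is already known to be nonzero and Hecke eigen, it must realize \emph{some} Arthur parameter on the Siegel side, and the constraints from Rallis's formula \eqref{rallis} combined with the infinitesimal character analysis leave only the parameter displayed in the table.
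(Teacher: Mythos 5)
Your overall strategy (work on the orthogonal side, enumerate Arthur parameters with the prescribed infinitesimal character, apply the multiplicity formula, transfer back via \eqref{rallis}) is the same as the paper's, and it does go through for $g=12$, where $\Lambda^{12}V$ splits over ${\rm SO}(V)$ into two irreducibles $A^\pm$ with $\dim {\rm M}_{A^\pm}({\rm SO}_{24})=1$. But there is a genuine gap at the step you wave through for $g=8$ and $g=16$. The multiplicity formula of \cite{TaiMult}, as used in \cite[Thm.\ 8.5.8]{CheLan}, is a statement about the \emph{special} orthogonal group; ${\rm O}_{24}$ is disconnected and is not an inner form of ${\rm SO}_{24}$. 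Since $\Lambda^8 V$ and $\Lambda^{16} V$ have isomorphic irreducible restrictions $B$ to ${\rm SO}(V)$, they have the \emph{same} infinitesimal character, ${\rm M}_B({\rm SO}_{24})$ is $2$-dimensional, and the multiplicity formula only tells you that \emph{both} candidates $\Delta_{21,13}[4]\oplus[7]\oplus[1]$ and $\Delta_{17}[8]\oplus[7]\oplus[1]$ occur in it. Your claimed disambiguation --- ``Rallis plus the infinitesimal character analysis'' --- cannot decide which of the two lands in ${\rm M}_{\Lambda^8 V}({\rm O}_{24})$ and which in ${\rm M}_{\Lambda^{16}V}({\rm O}_{24})$: both candidates contain the summand $[7]$ required by $\psi'_8=\psi_8\oplus[7]$, and both have the infinitesimal character of $B$. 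Resolving this is precisely the point where one would a priori need the archimedean packets for the disconnected group, i.e.\ the input from \cite{MR_scalar} that the theorem is advertised as avoiding.

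The paper's workaround is a concrete extra input that your argument is missing: by Arthur's multiplicity formula for ${\rm Sp}_{16}$ (or Ikeda's results, see \cite[Example 8.5.3]{CheLan}), there is \emph{no} cuspidal Siegel eigenform for ${\rm Sp}_{16}(\Z)$ with standard parameter $\Delta_{17}[8]\oplus[1]$. Since ${\rm F}_8$ is a nonzero cuspidal eigenform of genus $8$ (Theorem \ref{mainthm}) and $\psi'_8=\psi_8\oplus[7]$, this rules out $\psi'_8=\Delta_{17}[8]\oplus[7]\oplus[1]$ and forces the assignment in the table. Note also that for $g=24$ the paper does not run the multiplicity formula at all but quotes \cite[Prop.\ 7.5.1]{CheLan} (Ikeda, B\"ocherer/Weissauer) to obtain $\psi'_{24}=\Delta_{11}[12]$ directly; your route for $g=24$ would meet the same ${\rm O}$ versus ${\rm SO}$ difficulty, since $\Lambda^{24}V=\det$ restricts to the trivial representation of ${\rm SO}(V)$.
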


\begin{pf}By Proposition 7.5.1 of \cite{CheLan}, relying on \cite{Ikeda01} and \cite{Weissauer_book} or \cite{bocherer_theta},
we have $\psi'_{24}=\Delta_{11}[12]$, and thus $\psi_{24} = \Delta_{11}[12] \oplus [25]$ by \eqref{rallis}.
The remaining parameters are harder to determine,
and at the moment we only know how to do it using Arthur's results \cite{Arthur_book} together with \cite{AMR,TaiMult}.\ps
The irreducible representation $\Lambda^{12}\, V$ of ${\rm O}(V)$ is the sum of two irreducible non-isomorphic representations $A^{\pm}$ of ${\rm SO}(V)$.
As a consequence, the two spaces ${\rm M}_{A^{\pm}}({\rm SO}_{24})$ have dimension $1$ and are isomorphic to ${\rm M}_{\Lambda^{12}\, V}({\rm O}_{24})$ as ${\rm H}({\rm O}_{24})$-modules (see \cite[\S 4.4.4]{CheLan}). The eigenvalues of $s=(\Delta_{19,7}[6])_\infty$ are $\pm i$ with $i=1,\dots,12$, so $s$ is the image in ${\rm M}_{24}(\C)$ of the infinitesimal character of $A^{\pm}$.  By Arthur's multiplicity formula for ${\rm SO}_{24}$, discussed in \cite[Thm. 8.5.8]{CheLan} and which applies by \cite{AMR,TaiMult},
there is an ${\rm H}({\rm O}_{24})$-eigenvector in ${\rm M}_{A^{\pm }}({\rm
SO}_{24})$ with standard parameter $\Delta_{19,7}[6]$: this parameter must be
$\psi'_{12}$ because we have $\dim {\rm M}_{A^\pm}({\rm SO}_{24}) = 1$. \ps

The two non-isomorphic representations $\Lambda^8\, V$ and $\Lambda^{16}\, V$ of ${\rm O}(V)$ have isomorphic and irreducible restriction $B$ to ${\rm SO}(V)$. As a consequence, the space
\begin{equation} \label{decmb} {\rm M}_B({\rm SO}_{24}) \simeq  {\rm M}_{\Lambda^8 V}({\rm O}_{24}) \oplus {\rm M}_{\Lambda^{16} V}({\rm O}_{24})\end{equation} has dimension $2$ (see \cite[\S 4.4.4]{CheLan}). Assume $\psi \in \mathcal{X}_{24}$ is either $\Delta_{21,13}[4] \oplus [7] \oplus [1]$ or $\Delta_{17}[8] \oplus [7] \oplus [1]$. The eigenvalues of $\psi_\infty$ are the $\pm i$ with $0\leq i \leq 12$ and $i \neq 4$, so $\psi_\infty$ is the image in ${\rm M}_{24}(\C)$ of the infinitesimal character of $B$. An inspection of Arthur's multiplicity formula for ${\rm SO}_{24}$ \cite[Thm. 8.5.8]{CheLan} shows that
there is an ${\rm H}({\rm O}_{24})$-eigenvector in ${\rm M}_B({\rm SO}_{24})$ with standard parameter $\psi$. These two parameters are distinct and the isomorphism \eqref{decmb} is ${\rm H}({\rm O}_{24})$-equivariant by \cite[\S 4.4.4]{CheLan},
it thus only remains to explain which of the two eigenvectors above belongs to ${\rm M}_{\Lambda^8 V}({\rm O}_{24})$.
But Arthur's multiplicity formula for ${\rm Sp}_{16}$ (or Ikeda's results) shows that
there is no cuspidal Siegel eigenform for ${\rm Sp}_{16}(\Z)$ with standard parameter $\Delta_{17}[8] \oplus [1]$,
as explained in \cite[Example 8.5.3]{CheLan}.
This proves $\psi'_8 = \Delta_{21,13}[4] \oplus [7] \oplus [1]$ by \eqref{rallis},
hence $\psi'_{16} = \Delta_{17}[8] \oplus [7] \oplus [1]$,
and the whole table follows from \eqref{rallis} again. \end{pf}

{\small
\bibliographystyle{amsalpha}
\bibliography{poids13_leech}
}
\end{document}